\newtheorem{theorem}{Theorem}[section]
\newtheorem{assumption}[theorem]{Assumption}
\newtheorem{definition}[theorem]{Definition}
\newtheorem{lemma}[theorem]{Lemma}
\newtheorem{proposition}[theorem]{Proposition}
\theoremstyle{definition}
\newtheorem{remark}[theorem]{Remark}
\numberwithin{equation}{section}
\newcommand{\one}{\mathbbm{1}}
\newcommand{\ep}{\varepsilon}
\newcommand{\bE}{\mathbb{E}}
\newcommand{\bN}{\mathbb{N}}
\newcommand{\bR}{\mathbb{R}}
\newcommand{\bT}{\mathbb{T}}
\newcommand{\cC}{\mathcal{C}}
\newcommand{\cD}{\mathcal{D}}
\newcommand{\cF}{\mathcal{F}}
\newcommand{\cH}{\mathcal{H}}
\newcommand{\cL}{\mathcal{L}}
\newcommand{\cK}{\mathcal{K}}
\newcommand{\cN}{\mathcal{N}}
\newcommand{\cX}{\mathcal{X}}
\newcommand{\cY}{\mathcal{Y}}
\newcommand{\du}{\, \mathrm{d}u}
\newcommand{\ds}{\, \mathrm{d}s}
\newcommand{\D}{\, \mathrm{d}}
\newcommand{\E}{\mathrm{e}}
\newcommand{\BV}{\mathrm{BV}}
\newcommand{\VIX}{{\rm VIX}}
\newcommand{\tom}{\widetilde{\Omega}}
\newcommand{\tbd}{\widetilde{\bm{d}}}
\newcommand{\wpar}{\widehat{\partial}}
\newcommand{\tpar}{\widetilde{\partial}}
\newcommand{\tcL}{\widetilde{\cL}}
\newcommand{\dr}{\,\mathrm{d}r}
\newcommand{\half}{\frac{1}{2}}
\newcommand{\etab}{{\bar{\eta}}}
\newcommand{\omegab}{\bar{\omega}}
\newcommand{\gammab}{\bar{\gamma}}
\definecolor{ocean}{rgb}{0,0.1,0.6}
\definecolor{imperialGreen}{RGB}{2,137,59}
\newcommand{\notthis}[1]{}
\newcommand{\Wh}{\widehat{W}}
\newcommand{\norm}[1]{\left\lVert#1\right\rVert}
\newcommand{\abs}[1]{\left\lvert#1\right\rvert}
\let\oldtocsection=\tocsection
\let\oldtocsubsection=\tocsubsection
\let\oldtocsubsubsection=\tocsubsubsection
\renewcommand{\tocsection}[2]{\hspace{0em}\oldtocsection{#1}{#2}}
\renewcommand{\tocsubsection}[2]{\hspace{1em}\oldtocsubsection{#1}{#2}}
\renewcommand{\tocsubsubsection}[2]{\hspace{2em}\oldtocsubsubsection{#1}{#2}}
\begin{document}

\title{A path-dependent PDE solver based on signature kernels}

\author{Alexandre Pannier$^{*}$ and Cristopher Salvi$^{\dagger}$}
\address{$^{*}$Université Paris Cité, Laboratoire de Probabilités, Statistiques et Modélisation\\
$^{\dagger}$Imperial College London, Department of Mathematics and Imperial-X}
\email{pannier@lpsm.paris}
\email{c.salvi@imperial.ac.uk}

\subjclass[2020]{35R15, 60L10, 65N35, 91G60}
\keywords{Path signature, kernel methods, path-dependent PDEs, rough volatility}

\begin{abstract}
    We develop a kernel-based solver for path-dependent PDEs (PPDEs) along with a convergence theory.
    Our numerical scheme leverages signature kernels, a recently introduced class of kernels on path-space. Specifically, we solve an optimal recovery problem by approximating the solution of a PPDE with an element of minimal norm in the signature reproducing kernel Hilbert space constrained to satisfy the PPDE at a finite collection of collocation paths. In the linear case, we show that the optimisation has a unique closed-form solution expressed in terms of signature kernel evaluations at the collocation paths. Under strict assumptions, we prove consistency of the proposed scheme, guaranteeing convergence to the PPDE solution as the number of collocation points increases. Finally, several numerical examples are presented, in particular in the context of option pricing under rough volatility. Our numerical scheme constitutes a valid alternative to the ubiquitous Monte Carlo methods.
\end{abstract}

\maketitle




\section{Introduction}

\subsection{Path-dependent PDEs}
The Feynman-Kac formula asserts that solutions to backward Kolmogorov equations have a probabilistic representation 
as~$u_0(t,x)=\bE[\phi(Y_T)\lvert Y_{t}=x]$, for~$0\le t\le T,\, x\in\bR^d$, where~$Y$ is an underlying Markov process.
If~$Y$ models the evolution of the price of a financial asset, then this expectation corresponds to the price of a European-type derivative written on this asset. 
In its seminal paper, Dupire~\cite{dupire2022functional} introduces a first generation of path-dependent PDEs (PPDEs) which solution is represented as~$u_1(t,\gamma) = \bE[\phi(Y_{[0,T]}) \lvert Y_{[0,t]}=\gamma]$
where~$\phi$ acts on the whole past of~$Y$ and~$\gamma$ is a path. This expectation represents the price of a path-dependent derivative. Rough volatility models, and more generally stochastic Volterra processes, are not Markov processes hence neither formulation applies. The recent framework of~\cite{viens2019martingale} tackles precisely this class of processes, for which the path-dependence is inherent to the dynamics. We illustrate this with an example to highlight the differences with the aforementioned cases.
Let us consider a one factor rough volatility model 
\begin{equation}\label{eq:MainModel}
     X_t = X_0 + \int_{0}^t \sqrt{\psi_s(\Wh_s)} \D B_s -\half \int_0^t \psi_s(\Wh_s)\ds, \qquad \Wh_t  :=  \int_0^t K(t,r) \D W_r, 
\end{equation}
where~$X$ represents the log-price, $B$ and~$W$ are correlated Brownian motions and generate their natural filtration~$(\cF_t)_{t\in[0,T]}$, the kernel~$K$ is square integrable and typically taken to be~$K(t,r)=(t-r)^{H-\half}$ with~$H\in(0,1/2)$. In this situation $\Wh$ is a Gaussian Volterra process, which is neither a semimartingale nor a Markov process, that is why classical tools are not suitable. For~$s\ge t$, we make the orthogonal decomposition~$\Wh_s=\Theta^t_s +I^t_s$ where~$\Theta^{t}_s =  \int_0^{t} K(s,r) \,\D W_r $ is~$\cF_t$-measurable while~$I^t_s=\int_{t}^s K(s,r) \,\D W_r$ is independent from~$\cF_t$. As a consequence, one recovers the Markovian structure as follows:
\begin{align*}
    \mathbb{E}[\phi(X_T)|\cF_t]
    & = \mathbb{E}\left[\phi\left(X_t + \int_t^T \sqrt{\psi_s(\Wh_s)} \D B_s - \half\int_t^T \psi_s(\Wh_s) \ds \right) \Big\lvert\cF_t \right]\\
    & = \mathbb{E}\left[\phi \left(X_t + \int_t^T \sqrt{\psi_s(\Theta^t_s + I^t_s)} \D B_s - \half \int_t^T \psi_s(\Theta^t_s + I^t_s)\ds \right)\Big\lvert X_t, \Theta^t_{[t,T]}\right]\\
    & = \bE\left[\phi(X_T) | X_t, \Theta^t_{[t,T]}\right].
\end{align*}
This exhibits the path-dependence of the conditional expectation~$\bE[\phi(X_T)\lvert\cF_t]$ (corresponding to an option price with underlying asset~$\exp(X)$) and entails there is a measurable map~$u$ such that~$u(t,x,\gamma) = \bE\left[\phi(X_T) | X_{t}=x, \Theta^{t}_{[t,T]}=\gamma_{[t,T]}\right]$. Building on Viens and Zhang's functional Itô formula, further works~\cite{bonesini2023rough,wang2022path} study the associated path-dependent PDE of second generation of the form
\begin{equation}\label{eq:mainPDE_intro}
\left\{\hspace{-.4cm}
\begin{array}{rl}
    &\cL u(t,x,\gamma)= 0, \\
    &u(T,x,\gamma)=\phi(x),
\end{array}
\right.
\end{equation}
for all $t\in[0,T],x\in\bR,\gamma\in \cC([0,T];\bR)$, and where the linear operator $\cL$ is a combination of derivatives of all inputs. The pathwise derivatives are defined in Section \ref{subsec:functions_paths} while important well-posed examples can be found in Section~\ref{subsec:examples}. This paper aims at solving this type of PPDEs.

Apart from the realm of rough volatility, stochastic Volterra processes have appeared in the modeling of electricity prices and turbulence~\cite{barndorff2014assessing,bennedsen2022rough,corcuera2013asymptotic,mishura2023gaussian}, in principal-agent problems~\cite{abi2022gaussian}, climate modeling~\cite{eichinger2020sample,sonechkin1998climate} and in asymptotic studies motivated by physical phenomena~\cite{gehringer2022functional,hairer2005ergodicity,li2022slow}. More broadly, fractional Brownian motions and related processes are present in a variety of situations where intertemporal correlation is observed, starting with the seminal paper~\cite{mandelbrot1968fractional}.

While the PPDE representation finds other applications (e.g. weak error rates in \cite{bonesini2023rough}), this article is concerned with their numerical computation. The solution to a PPDE is a function of a path (and other finite-dimensional inputs). Contrary to the wide literature on numerical schemes for high-dimensional PDEs, we are here interested in purely infinite-dimensional inputs that we will treat as such instead of discretising them. Despite the central place of such PPDEs in mathematical finance, numerical methods pertaining to them are scarse; we refer to Section~\ref{subsec:literature} for an overview of the literature. Our motivation is then twofold: design a numerical solver for such PDEs and overcome the technical challenges of learning a function on paths. 



\subsection{Signature kernels}

Kernel methods are a well-established class of algorithms that are the key components of several machine learning models such as support vector machines (SVMs). The central idea of kernel methods is to lift the (possibly unstructured) input data to a (possibly infinite-dimensional) Hilbert space by means of a nonlinear \emph{feature map} defined in terms of a kernel. The advantage of doing so lies in the fact that a non-linear optimisation task in the original space becomes linear once lifted to the feature space. Thus, the solution of the original optimisation problem can often be expressed entirely in terms of evaluations of the kernel on training points by means of celebrated representer theorems. The kernel can often be evaluated efficiently with no reference to the feature map, a property known as \emph{kernel trick}. The selection of an effective kernel will usually be a task-dependent problem, and this challenge is particularly difficult when the data is sequential. Signature kernels  \cite{kiraly2019kernels, salvi2021signature} are a class of \emph{universal kernels} on sequential data which have received attention in recent years
 thanks to their efficiency in handling path-dependent problems \cite{lemercier2021distribution, salvi2021higher, cochrane2021sk, lemercier2021siggpde, cirone2023neural, issa2023non, horvath2023optimal, manten2024signature}.

\subsection{Kernels vs neural networks for PDEs} In recent years, PDE solvers based on neural networks and kernel methods have been introduced as alternatives to classical finite difference and finite elements schemes. The most famous examples of neural PDE solvers are the Deep Galerkin method (DGM) \cite{sirignano2018dgm} and Physics informed neural networks (PINNs) \cite{raissi2019physics}. These techniques essentially parameterise the solution of the PDE as a neural network which is then trained using the PDE and its boundary conditions as loss function. More recently, DGM and PINNs have been extended to path-dependent PDEs by \cite{sabate2020solving, saporito2021path, jacquier2023deep}. PDE solvers based on kernel methods have been investigated in various works \cite{cockayne2016probabilistic, raissi2017machine, raissi2018numerical, chen2021solving, chen2023sparse}, but the development of kernel methods for path-dependent PDEs has remained open. PDE solvers based on kernel methods have the potential for considerable advantages over neural PDE solvers, both in terms of theoretical analysis and numerical implementation. In effect, the theoretical analysis of neural PDE solvers is often limited to density or universal approximation results aimed at showing the existence of a network of a requisite size achieving
a certain error rate, without guaranteeing whether this network is computable in practice via stochastic gradient descent. In contrast, as we will demonstrate in this work, kernel-based PDE solvers are provably convergent under certain conditions and amenable to rigorous numerical analysis based on a rich arsenal of functional analytic results from the kernel literature. In addition to their theoretical appeal, kernel PDE solvers also enjoy concrete numerical benefits over neural PDE solvers. For linear PDEs such as those treated in this paper, for example, training reduces to solving a linear system of equations, whose stable numerical solutions are readily obtained with standard linear algebra routines. This stands in contrast to neural PDE solvers, where training involves non-convex optimization, often unstable in practice, and highly sensitive to hyperparameter choices.

\subsection{Monte Carlo methods for pricing under rough volatility} The notion of \emph{rough volatility} has recently disrupted stochastic modelling in quantitative finance \cite{bayer2023rough}, leaving no one indifferent in the field. In this setting, the instantaneous volatility of assets is modelled as a fractional Brownian motion with small Hurst parameter as in~\eqref{eq:MainModel} or as a stochastic Volterra process. This feature has been shown empirically to be consistent with historical market trajectories and to capture important stylised facts of equity options in a parsimonious way. However, this new paradigm comes with an increased computational cost compared to classical techniques. With the notable exception of the rough
Heston model, the absence of Markovianity of volatility prevents any pricing tools other than Monte Carlo simulations. However, the new promises--–for estimation and calibration--–of these rough volatility models have encouraged deep and fast innovations in numerical methods for pricing, in particular the now standard Hybrid scheme \cite{bennedsen2017hybrid}. The complexity of a Monte Carlo scheme scales roughly as $\mathcal{O}(h^{-2} N d^2)$, where $N$ is the number of sample paths, $h$ is the discretisation step and $d$ is the dimension of the process. This complexity can be further reduced to $\mathcal{O}((h\log(h))^{-1} N d^2)$ with hybrid schemes  \cite{bennedsen2017hybrid}. In this paper, we propose a simulation-free alternative to Monte Carlo pricing by constructing a kernel-based solver for the corresponding PPDE. The complexity of the resulting solver scales as $\mathcal{O}(h^{-2} \hat{N}d^2)$ where this time $\hat{N}$ is the number of collocation paths. This cost can be further reduced to~$\mathcal{O}(h^{-1} \hat{N} d^2)$ if the operations are carried out on a GPU, see \cite{salvi2021signature} for details. As our numerical experiments demonstrate, a transparent comparison of complexities between our approach and Monte Carlo for pricing under rough volatility can only be achieved by obtaining error rates for the convergence of the approximations, which we intend to explore as future work. 

\subsection{Contributions}
In this paper we design a novel  numerical solver for PPDEs based on signature kernel and accompanied by a convergence theory. 
Our numerical scheme does not use any classical probabilistic representation or any other pre-existing solver; it constitutes a valid alternative to the ubiquitous Monte Carlo methods, in particular in the context of option pricing  under rough volatility. Our kernel-based solver does not require additional ``training'' costs: the estimation of the Gram matrix is performed once, offline, and can then be reused for multiple configurations, payoffs, and evaluation points $(t,x,\gamma)$. By contrast, while Monte Carlo methods also allow the reuse of sample paths across different payoffs, a new simulation is required for each evaluation point $(t,x,\gamma)$. Moreover, Monte Carlo is only applicable when the PPDE admits a probabilistic representation. In settings where such a representation is unavailable, Monte Carlo cannot be used, whereas our solver remains applicable.

The hurdles in the theoretical analysis are many-fold and their resolution contributes to the development of the theory of signature kernels. Challenges include the translation of the PPDE framework into one amenable for the use of signature kernels, the proof of  existence and uniqueness of a global minimum in the optimisation problem as well as an analytic form of the solution by means of a representer theorem, and a consistency result based on the adaptation of convergence results from~\cite{chen2021solving} to this infinite-dimensional setting via a compactness argument for functions on path spaces (Theorem~\ref{th:MainCvg} and Lemma~\ref{lemma:compact_H}).  Finally, several numerical examples are presented in Section~\ref{sec:numerics} and the code is open-sourced at \url{https://github.com/crispitagorico/sigppde}.


\subsection{Related literature}\label{subsec:literature}

Several deep learning and signature-based approaches have been proposed in recent years. In \cite{sabate2020solving}, the authors use a probabilistic approach based on the martingale representation theorem to design an objective function for solving time-discretised PPDEs via a parametric model combining RNNs and signatures. The martingale representation theorem is also central in~\cite{fang2023neural}, where the authors make use of continuous-time RNNs developed in \cite{morrill2021neural}. In both these papers, the signature has to be truncated for the implementation. The authors of~\cite{saporito2021path} extend the deep Galerkin framework proposed by \cite{sirignano2018dgm} using an LSTM architecture at each time step. The only paper applicable to Volterra processes~\cite{jacquier2023deep} develops a backward discretisation algorithm using neural networks,  drawing inspiration from the deep learning methodology introduced in \cite{han2018solving}.  We note that none of the aforementioned papers present theoretical results on the proposed numerical schemes and they all rely on the training of neural networks to approximate the solution. 
Aloof from the learning paradigm, convergence guarantees of monotone schemes can be found in~\cite{ren2017convergence} for the viscosity solutions to first generation PPDEs.
The present paper is, to our knowledge, the first to propose a numerical scheme for second generation PPDEs with a convergence theory. 

\subsection{Outline}
Section~\ref{sec:PPDEs} describes the rigorous framework of PPDEs and how we intent to approximate them. This is followed in Section~\ref{sec:kernels} by an introduction to signature kernels and the definitions we will need in the remainder of the paper. The main theoretical analysis and results are presented in Section~\ref{sec:theory} while the numerical experiments can be found in Section~\ref{sec:numerics}. We give an outlook on future research in Section~\ref{sec:outlook}. Finally, Appendix~\ref{sec:appendix} gathers some technical proofs. 

\section{Path-dependent PDEs}\label{sec:PPDEs}

As the example from the introduction suggests, in situations where the underlying process is fractional, such as fractional Brownian motion, rough volatility models or stochastic Volterra processes, conditional expectations are functions of paths. In this section, we will present several instances where these functions are solutions to well-posed path-dependent PDEs. Beforehand, we must set the mathematical scene in a rigorous way. We introduce the notations and definitions needed to formalise equations such as~\eqref{eq:mainPDE_intro}. This framework enables to present central examples in subsection~\ref{subsec:examples}, then in subsection~\ref{subsec:bounded_variation} we adapt this setup to the signature realm and finally outline the numerical method is subsection~\ref{subsec:method}.

\subsection{Path spaces} \label{subsec:path_spaces}
Let us set $d,e\in\bN_0:=\bN\cup\{0\}$, a finite time horizon $T>0$ and an interval~$\bT:=[0,T]$. Throughout this paper we deal with continuous paths~$\gamma:\bT\to\bR^e$ and their perturbed c\`adl\`ag version~$\gamma+\ep\eta\one_{[t,T]}$ for which we need to introduce a number of spaces and norms. When the interval~$\bT$ and state space~$\bR^e$ are clear from the context we will often omit them. 
The space of continuous functions from~$\bT$ to~$\bR^e$, denoted~$\cC(\bT,\bR^e)$, is equipped with the supremum norm~$\norm{\gamma}_{0;\bT}:=\sup_{t\in\bT} \abs{\gamma_t}$, unless stated otherwise. The notation~$\abs{\cdot}$ refers to the Euclidean norm in one or multiple dimensions. The space of càdlàg paths from~$\bT$ to~$\bR^e$ is denoted~$D(\bT,\bR^e)$ while for all~$t\in\bT$ we will make use of the space of càdlàg paths which are continuous on~$[t,T]$, that is
$$
\cD_t:= \Big\{\gamma\in D(\bT,\bR^e): \gamma \big\lvert_{[t,T]}\in \cC([t,T],\bR^e)\Big\}.
$$
This space allows to consider paths of the form~$\gamma+\ep\eta\one_{[t,T]}$ where~$\gamma,\eta\in \cC(\bT,\bR^e)=\cD_0$ and~$\ep>0$ which in turn permits the definition of pathwise derivatives on this space.
In parallel, for~$p\ge1$, we also leverage the $p$-variation seminorm which is defined for~$0\le t<t'\le T$ and a path~$\gamma:[t,t']\to\bR^e$ as
$$
\abs{\gamma}_{p-var;[t,t']}:=\left( \sup_{(t_i)_i\in\pi} \sum_{i} \abs{\gamma_{t_{i+1}}-\gamma_{t_i}}^p\right)^{1/p},
$$
where~$\pi$ is the set of all partitions of the form~$t=t_0<t_1<\cdots<t_n=t'$ for some~$n\in\bN$. The associated norm is~$\norm{\gamma}_{p-var;[t,t']}=\abs{\gamma_t}+\abs{\gamma}_{p-var;[t,t']}$. This gives rise to the spaces~$\cC^{p-var}([t,t'],\bR^e)$  of 
continuous paths with one strictly monotone coordinate \footnote{This is a technical assumption needed to ensure injectivity of the signature and consequently the point separating property assumption needed to establish universality of the signature kernel by Stone-Weierstrass arguments. More generally one could work with equivalence classes obtained by quotienting $\cC$ by the so-called \emph{tree-like equivalence relation} $x \sim_\tau y \iff S(x) = S(y)$; with the assumption that the paths have one strictly monotone coordinate, each equivalence class collapses to a singleton, which is the path itself. The monotone coordinate is usually taken to be time.} and finite $p$-variation norm. For each such path~$\gamma$ we denote by~$\gamma^0$ the strictly monotone coordinate and by~$\check{\gamma}$ the $\bR^{e-1}$-valued path without the strictly monotone coordinate.

In the case~$p=1$, the space~$\cC^{1-var}([t,t'],\bR^e)$ is called the bounded variation space and also denoted $\BV([t,t'],\bR^e)$.
Of particular interest to us are the spaces~$\cC^{p-var}_t:=\cC^{p-var}([t,T],\bR^e)$ and~$\BV_t:=\cC^{1-var}_t$ for some~$t\in\bT$.
While the path space~$\cD_t$ in the original theory of~\cite{viens2019martingale} allows for one jump, this would lead to severe complications for the application of signature kernels. We show in the next subsection how to adapt our framework to avoid the introduction of jumps.
Furthermore, we want to equip the space~$\BV_0$ with a topology that renders the signature map continuous (see Section~\ref{sec:kernels} for the details). 
We could choose any~$1\le p$-variation topology for this task, however the necessity to characterise compact sets of~$\BV_0$ later on forces us to consider~$p>1$. This task involves the introduction of Hölder spaces~$\cC^{\alpha}([t,t'],\bR^e)$, with~$\alpha\in(0,1)$ endowed with the $\alpha$-Hölder norm
$$
\norm{\gamma}_{\alpha;[t,t']} := \abs{\gamma_t}+\abs{\gamma}_{\alpha;[t,t']}:=\abs{\gamma_t}+ \sup_{0\le s<t\le T} \frac{\abs{\gamma_t-\gamma_s}}{\abs{t-s}^\alpha}.
$$
If the interval~$[t,t']$ is clear from the context, for instance if~$\gamma\in\BV([t,T],\bR^e)$, we omit to write it as a subscript.
Having in mind to characterise compact subsets of~$\cC^{p-var}$, we follow~\cite{cuchiero2023global} and intersect it with a Hölder space: let~$\cC^{p-var,\alpha}([t,t'],\bR^e):=\cC^{p-var}([t,t'],\bR^e)\cap \cC^\alpha([t,t'],\bR^e)$ be the space of $\alpha$-Hölder continuous paths with finite~$p$-variation. The corresponding norm is~$\norm{\gamma}_{p-var,\alpha;[t,t']} :=  \abs{\gamma}_{\alpha;[t,t']} + \abs{\gamma}_{p-var;[t,t']}$. It is then proven in~\cite[Theorem A.8]{cuchiero2023global} that the embedding~$\cC^{p-var,\alpha} \hookrightarrow \cC^{p'-var,\alpha'}$ is compact for all~$1\le p<p'\le\infty$ and~$0\le \alpha'<\alpha< 1$ such that~$\alpha p<1$ and~$\alpha'p'<1$. In particular, bounded subsets of~$\cC^{1-var,\alpha}$ are compact with respect to the~$p$-variation topology for all~$\alpha\in(0,1)$ and~$p>1$, see also~\cite[Proposition 5.30]{friz2010multidimensional}.

The solutions to the path-dependent PDEs of interest are functionals of time, space and paths, hence we define suitable space and distance, borrowed from~\cite{viens2019martingale}:
\begin{equation}\label{eq:def_Omegatilde}
\tom = \Big\{(t,x,\gamma)\in\bT\times\bR^d\times \cD_t\Big\}, \qquad \tbd((t,x,\gamma),(\bar{t},\bar{x},\gammab)):= \abs{t-\bar{t}}+\abs{x-\bar{x}} + \norm{\gamma-\gammab}_{0;\bT}.
\end{equation}
Note that~$\gamma$ lives in~$\cD_t$ which depends directly on~$t$. We will denote by $\Omega$  the state space with~$\BV_0$ instead of $\cD_t$ and such that paths are constant on~$[0,t]$ (except for the strictly monotone coordinate), that is
\begin{equation}\label{eq:def_Omega}
\Omega := \big\{(t,x, \gamma)\in\bT\times \bR^d\times \BV_0: \check{\gamma}_s=\check{\gamma}_0 \;\text{for all  } s\le t\big\}.
\end{equation}
The analogous distance is then
\begin{equation}\label{eq:distance_d}
\bm{d}((t,x,\gamma),(\bar{t},\bar{x},\gammab)):= \abs{t-\bar{t}}+\abs{x-\bar{x}} + \norm{\gamma-\gammab}_{p-var;\bT}.
\end{equation}
Notice that if~$B$ is a compact subset of~$(\BV_0,\norm{\cdot}_{p-var})$ then for any reals~$a<b$, the set~$\bT\times[a,b]^d\times B$ is a compact subset of~$\Omega$ with respect to~$\bm{d}$.


\subsection{Functions of continuous paths} \label{subsec:functions_paths}
This subsection presents the framework of~\cite{viens2019martingale}.
Let~$\cC(\tom):=\cC(\tom, \bR)$ denote the set of real-valued functions~$u:\tom\to\bR$ continuous under $\tbd$.
For~$u\in\cC(\tom)$, define the right time derivative
$$
\partial_t u(t,x,\gamma):= \lim_{\ep\downarrow0}\frac{u(t+\ep,x,\gamma)-u(t,x,\gamma)}{\ep},
$$
for all~$t\in[0,T)$, provided the limit exists. By convention we set $\partial_t u(T,x,\gamma)=0$. The derivative $\partial_{x} u(t,x,\gamma)$ is defined in the same way for all~$(t,x,\gamma)\in\tom$.
 We also introduce the directional derivative~$\tpar_{\gamma}^\eta u(t,x,\gamma)$ with respect to~$\gamma\in \cD_0$, in the direction of~$\eta\in \cC([t,T],\bR^e)$:
\begin{equation}
     u(t,x,\gamma+\ep\eta\one_{[t,T]})-u(t,x,\gamma) =   \tpar_{\gamma}^\eta u(t,x,\gamma) + o(\norm{\eta\one_{[t,T]}}_{\bT}).
    \label{eq:Frechet_def}
\end{equation}
Notice that the path is only perturbed on the interval~$[t,T]$, hence the necessity of a path space allowing for a jump at $t$.
The  operator~$\eta\mapsto\tpar_\gamma^\eta u$ is linear on $\cC([t,T],\bR^e)$ and a Fréchet derivative; moreover if it exists it is equal to the Gateaux derivative.
We observe crucially that both the Fréchet and Gateaux derivative are operators acting on functions from~$\tom$ to~$\bR$; in particular the time dependence is essential.
We define higher derivatives as linear operators on~$(\cC([t,T],\bR^e))^{n}$, in particular the second derivative is a bilinear operator on~$\cC([t,T],\bR^e)\times \cC([t,T],\bR^e)$ and we denote it~$\tpar_\gamma^{\eta\etab} u(t,x,\omega)$ for~$\eta,\etab\in \cC([t,T],\bR^e)$.

Let~$n,n_1,n_2,n_3\in\bN_0$.
We denote the higher derivatives~$\partial_t^{n_1},\,\partial_x^{n_2}$ and~$\tpar_\gamma^{\bm{\eta}}$, for~$\bm{\eta}=(\eta_1,\cdots,\eta_{n_3})\in (\cC([t,T],\bR^e))^{n_3}$; the order of the directional derivative is implied by the dimension of~$\bm{\eta}$. We say that a function~$u\in \cC(\widetilde{\Omega})$ belongs to~$\cC^{\cN}(\widetilde{\Omega})$ if the linear operator~$\partial_t^{n_1}\,\partial_x^{n_2} \, \tpar_\gamma^{\bm{\eta}}  
$ exists and is continuous with respect to~$\tbd$ for all~$(n_1,n_2,n_3)\in \cN \subset \bN^3_0$. The examples of interest arise from Kolmogorov equations, hence only the first order time derivative appears and is never combined with the spatial and pathwise derivatives, while the latter are present up to order two. The corresponding subset is~$\cN= \{(n_1,n_2,n_3)\in \bN^3_0: n_1\le 1, n_2=n_3=0, \text{ or  } n_1=0, n_2+n_3\le 2\}$.

We intend to deal with fractional processes with singular kernels of the form $K(s,t)=(s-t)^{H-\half}\one_{s>t}$ with~$H\in(0,\half)$. 
Taken as a function of~$s\in[t,T]$, these kernels turn out to be the direction of the pathwise derivative but lie outside of~$\cC(\bT,\bR)$, hence an approximation procedure is required. For~$0\le t<s\le T$, we define the path $K^t(s):=K(s,t)$ and its truncated version $K^{\delta,t}(s):=K(s\wedge (t+\delta),t)$, for~$\delta>0$. The directional derivative for such a singular path is then defined, if it exists, as
\begin{align}\label{eq:singular_der}
     \tpar_{\gamma}^{K^t} u(t,x,\gamma) &:= \lim_{\delta\downarrow0}  \tpar_{\gamma}^{K^{\delta,t}} u(t,x,\gamma),
\end{align}
and similarly for the higher derivatives.
The generalisation of the space~$\cC^\cN$ to this type of derivative is more involved and requires regularity estimates tailored to the speed of explosion of the kernel. For precise definitions, we will point towards the appropriate references in the examples below. 
We are now ready to state the equations we wish to solve.


\subsection{The target PDEs} \label{subsec:examples}
This paper's focus is on approximating functionals~$u$ which only depend on the path over $[t,T]$, said differently there exist~$\tilde{u}$ such that~$u(t,x,\gamma)=\tilde{u}(t,x,\gamma\lvert_{[t,T]})$. This is the case of all the examples presented in this section.
These functionals are solutions to linear PPDEs taking the general form for all~$(t,x,\gamma)\in\tom$
\begin{equation}\label{eq:mainPDE}
\def\arraystretch{1.5}
    \left\{\hspace{-.4cm}
\begin{array}{rl}
&\tcL_t u(t,x,\gamma)= g(t,x,\gamma),\\
&u(T,x,\gamma)=\phi(T,x,\gamma),
\end{array}
\right.
\end{equation}
where~$g,\phi:\tom\to\bR$ are measurable functions and the linear operator $\tcL_t$ is a combination of derivatives of all arguments.
More precisely, there exist~$\cN\subset\bN^3_0$, $a_n\in\bR$ for all~$n\in\cN$ and~$\bm{\eta}\in (\cC([t,T],\bR^e))^{n_3}$ such that
\begin{equation}\label{eq:Def_tcL}
    \tcL_t := \sum_{n=(n_1,n_2,n_3)\in\cN} a_n \partial_t^{n_1}\,\partial_x^{n_2} \, \tpar_\gamma^{\bm{\eta}},
\end{equation}
where the sum is taken over multi-indices~$n\in\cN$. We emphasise on the~$t$ dependence which is important because the directional derivative is a linear operator on $\cC([t,T],\bR^e)$. We need this notation instead of the more standard multi-index one in order to emphasise on the order of each derivative (in our applications $n_1\le 1$, $n_2+n_3\le 2$) and also on the direction~$\bm{\eta}$. 

More precisely, this paper will focus on three examples of parabolic second order PDEs which arise from the Feynman-Kac formula. The solution to these PPDEs thus have a probabilistic representation, a crucial advantage to assess the accuracy of our method. However we emphasise that the numerical scheme does not use this a-priori knowledge as in supervised learning tasks. Much like deep Galerkin or PINN algorithms~\cite{sirignano2018dgm,raissi2019physics}, our method can be deployed to solve PPDEs for which no numerical method is known.

Our examples take as underlying a Gaussian Volterra process~$\Wh$ defined as $\Wh_t  :=  \int_0^t K(t,r) \D W_r$ for some  kernel~$K\in L^2(\bT^2,\bR)$. The key object, already introduced in the introduction, is the double index process, for~$0\le t\le s$, $\Theta^t_s = \bE[\Wh_s\lvert\cF_t]= \int_0^{t} K(s,r) \,\D W_r$. In alignment with the prolific rough volatility literature, we will mostly focus on the specification~$K(t,r)=(t-r)^{H-1/2}\one_{r\in[0,t)}$ for $H\in(0,1)$ but other choices are possible, see for instance \cite[Assumption 2.6]{bonesini2023rough}. The current setup (introduced in~\cite{viens2019martingale}), as well as our numerical method, allow for path-dependent payoffs. Since~$\Wh$ is a fractional process, the path-dependence is inherent and transfers to the PDE even for state-dependent terminal conditions (payoffs). For clarity of exposition and because well-posedness results are scarcer in the path-dependent case, we will however stick to state-dependent payoffs in our numerical experiments.

In each of the three subsequent subsections, we present
(i) the model (ii) the value function~$u$ and (iii) the PPDE, in that order. We point to the suitable references for well-posedness and for more details. We emphasise that all three examples are particular cases of~\cite[Proposition 2.14]{bonesini2023rough}; this result ensures, under general assumptions, existence and uniqueness of a classical solution to the PPDE associated to a Volterra process. 
In all three examples, the kernel's singular behaviour around $0$ must be dominated by a power-law~$t\mapsto t^{H-\half},H\in(0,\half)$ and functions are assumed three times differentiable with either exponential (for those applying to a Gaussian variable) or polynomial growth (for the others).

\subsubsection{Fractional Brownian motion}\label{subsec:example_fbm}
In this first toy example we consider test functions of $(\Wh_s)_{s\in[t,T]}$ which corresponds to the state space $\tom$ with $d=0$ and $e=1$. Let us define the function
\begin{align}\label{eq:uFBM}
u(t,\gamma) :=&\, \bE\left[\phi(\Wh_T) + \int_t^T f(s,\Wh_s) \ds \Big|\Theta^t=\gamma \right] \\
=&\, \bE\left[ \phi\left(\gamma_T+\int_t^T K(T,s)\D W_s\right) + \int_t^T f\left(s,\gamma_s+\int_t^s K(t,r)\D W_r\right) \ds \right].
\end{align}
In particular, we observe that $u(t,\Theta^t) = \bE\left[\phi(\Wh_T) + \int_t^T f(s,\Wh_s) \ds \Big|\cF_t\right].$
Note that the path-dependence of the conditional expectation is contained in the path $\Theta^t$ because the payoff only acts on $\Wh_s$ for $s\in[t,T]$.

Based on \cite[Theorem 4.1]{viens2019martingale}, under some standard regularity assumptions on~$\phi,f$ and~$K$, $u$ is a solution to the path-dependent heat equation for all~$(t,\gamma) \in \tom$
\begin{equation}\label{eqn:PPDEfBM}
\def\arraystretch{1.5}
    \left\{\hspace{-.4cm}
\begin{array}{rl}
&\partial_{t}u(t,\gamma) + \half  \tpar^{K^tK^t}_{ \gamma} u(t,\gamma) + f(t,\gamma) = 0, \\ &u(T,\gamma)=\phi(\gamma_T).
\end{array}
\right.
\end{equation}
It can also be proved~\cite{bonesini2023rough} that this solution is unique in the space where the functional Itô formula~\cite[Theorem 3.17]{viens2019martingale} is applicable.

\subsubsection{VIX options under rough Bergomi model}\label{subsubsec:VIX}
Options on VIX and realised variance form an important class of derivatives, which can be seen as options with path-dependent payoffs. We define~$\VIX_T^2 = \frac{1}{\Delta}\int_T^{T+\Delta} \bE [\psi_s(\Wh_s)\lvert \cF_t] \ds$ where~$\Delta$ is a positive constant corresponding to 30 days, and~$\Wh$ is a Gaussian Volterra process in~$\bR^e$ for any~$e\in\bN$. It is shown in~\cite{pannier2023path} that an option on the VIX (future) has the representation
$$
\bE\Big[\phi\big(\VIX_T^2\big) \big|\cF_t\Big] = \bE\bigg[\phi\circ\mathfrak{F}\Big(
    \Big\{ \Theta^t_s + J^{t,T}_s :s\in[T,T+\Delta]\Big\}\Big) \Big\lvert \Theta^t\bigg],
$$
where~$J^{t,T}_s = \Theta^T_s - \Theta^t_s$ is independent of~$\cF_t$ and $\mathfrak{F}(\gamma) :=\frac{1}{\Delta}\int_T^{T+\Delta} \bE[  \psi_s(\gamma_s+I^T_s)] \ds$ where~$I^T_s\sim\mathcal{N}\left(\bm{0}_d, \int_T^s K(s,r)K(s,r)^\top \dr\right)$. Under natural assumptions on~$\phi,\psi$ and~$K$, the map $u\big(t,\gamma): =\bE\Big[\phi\circ\mathfrak{F}\Big(
    \big\{ \gamma_s + J^{t,T}_s :s\in[T,T+\Delta]\big\}\Big)\Big]$ is the unique solution to the PPDE for all~$(t,\gamma)\in\tom$
\begin{equation}\label{eq:main_PPDE_VIX}
\def\arraystretch{1.5}
    \left\{\hspace{-.4cm}
\begin{array}{rl}
    &\displaystyle \partial_t u(t,\gamma) + \half \tpar^{K^t K^t}_{\gamma} u(t,\gamma) = 0,\\
    &\displaystyle  u(T,\gamma) = \phi\circ\mathfrak{F}(\gamma).
\end{array}
\right.
\end{equation}
\subsubsection{Options under rough Bergomi model}\label{subsec:example_rBergomi}
In this last example, which was already presented in the introduction, the underlying is the (log-)asset price in a rough Bergomi-type model.
The value function in the rough Bergomi model is indexed on the state space~$\tom$ with $d=1$. As we have already seen,
\begin{equation*}
    u(t,x,\gamma) := \bE\left[\phi(X_T) | X_{t}=x, \Theta^{t}=\gamma\right],
\end{equation*}
which entails $u(t,X_t,\Theta^t)=\bE[\phi(X_T) \lvert \cF_t]$.
Based on \cite[Theorem 2.25]{bonesini2023rough}, natural assumptions on~$\phi,\psi$ and~$K$ yield the well-posedness of the PPDE for all~$(t,x,\gamma)\in\tom$
\begin{equation}\label{eq:PPDE_rvol}
\def\arraystretch{1.5}
    \left\{\hspace{-.4cm}
    \begin{array}{rl}
    &\partial_t u+ \half\psi_t(\gamma_t) \Big(\partial_{x}^2 u-\partial_x u \Big) + \half  \tpar_{\gamma}^{K^tK^t} u + \rho\sqrt{\psi_t(\gamma_t)} \tpar_{\gamma}^{K^t} (\partial_x u) =0,\\
    &u(T,x,\gamma)=\phi(x).
\end{array}
\right.
\end{equation}
Notice the boundary condition only applies to $x$ and is not path-dependent. 

\begin{remark}
In the rough Bergomi model, the paths $\Theta$ have a financial interpretation as they are linked to the forward variance curves~$\xi$, a quantity observed on the markets. We refer to \cite[Remark 4.7 (5)]{pannier2023path} for more details.
\end{remark}

\subsubsection{An important remark on the singularity of the kernel}
We emphasise that the PPDEs presented in these examples fit in~\eqref{eq:mainPDE} albeit with a non-continuous direction~$K^t$. Since the pathwise derivative in this direction is defined as the limit of the regularised derivative with direction~$K^{t,\delta}$ as in~\eqref{eq:singular_der}, one can show thanks to the probabilistic representations that the solution to the regularised PPDE converges to the solution of the singular one as $\delta$ goes to zero. 
Our numerical scheme approximates the PPDE~\eqref{eq:mainPDE} with a regularised kernel, for a fixed~$\delta>0$.


\subsection{Functions on bounded variation paths}\label{subsec:bounded_variation}
Making the framework of subsection~\ref{subsec:functions_paths} amenable to the application of signature kernels requires some care. Firstly, the latter are defined on bounded variation paths, a subset of the continuous paths considered in the previous subsection. In this subsection, functions are thus defined over~$\Omega$ \eqref{eq:def_Omega} instead of~$\widetilde{\Omega}$ \eqref{eq:def_Omegatilde}. 

Further, we make the observation that all three examples presented above are functionals~$(t,x,\gamma)\mapsto u(t,x,\gamma)$ that, for all~$t\in\bT$, only act on~$(\gamma_s)_{s\in[t,T]}$, the path after time~$t$.
This crucial property is the reason why we want to consider paths that are constant on~$[0,t]$, so that no unnecessary information is included.
We denote by~$\cC(\Omega)$ the set of continuous functions~$u:\Omega\to\bR$ with respect to~$\bm{d}$ \eqref{eq:distance_d}; for those there exists a family~$(u_{t,x})_{t\in\bT,x\in\bR^d}$ of continuous maps from~$\BV_t=\cC^{1-var}([t,T],\bR^e)$ to~$\bR$ such that~$u(t,x,\gamma)=u_{t,x}(\gamma^0\lvert_{[t,T]},\check\gamma\lvert_{[t,T]})$ (recall that for each time-augmented path~$\gamma$ we denote by~$\gamma^0$ the strictly monotone coordinate and by~$\check{\gamma}$ the $\bR^{e-1}$-valued path without the strictly monotone coordinate). Note that the distance~$\bm{d}$ looks into the path on~$[0,t]$, indeed without this information one would lose the positivity for two paths distinct on~$[0,t]$, while considering only continuity of~$u_{t,x}$ leads to issues in defining compact sets of~$\Omega$. 
To be consistent with the derivative of the signature kernel introduced in~\cite{lemercier2021siggpde}, we choose a weaker form of pathwise derivative than the Fréchet one: the Gateaux derivative (it will appear later that signatures are continuous with respect to the supremum norm and therefore both derivatives agree). This is where the restriction to~$[t,T]$ becomes useful and allows us to avoid jumps. Recall that the Gateaux derivative perturbs the path on the whole interval where it is defined
\begin{equation}
 \partial_{\gamma}^\eta u(t,x,\gamma)
    := \lim_{\ep\downarrow0}\frac{u(t,x,\gamma+\ep\eta)-u(t,x,\gamma)}{\ep}.
\label{eq:Gateaux_def}
\end{equation}
We instead define the pathwise derivative of~$u\in\cC(\Omega)$ in the direction~$\eta\in\BV_0$ such that only the path~$\check\gamma$ is perturbed and solely on~$[t,T]$:
\begin{equation} \label{eq:Pathwise_restricted_def}
    \wpar^{\eta}_{\gamma} u(t,x,\gamma):= \partial^{({\bm 0},\check\eta\lvert_{[t,T]})}_\gamma u_{t,x}(\gamma^0\lvert_{[t,T]},\check\gamma\lvert_{[t,T]}) = \lim_{\ep\to0} \frac{u_{t,x}(\gamma^0\lvert_{[t,T]},(\check\gamma+\ep\check\eta)\lvert_{[t,T]})-u_{t,x}(\gamma^0\lvert_{[t,T]},\check\gamma\lvert_{[t,T]})}{\ep}.
\end{equation}
In contrast to the Fréchet derivative defined in~\eqref{eq:Frechet_def} that perturbs the path over the interval~$[t,T]$, this definition deals only with continuous (even bounded variation) paths. 
Similarly to the former setup in subsection~\ref{subsec:functions_paths}, for~$n,n_1,n_2,n_3\in\bN_0$, we can define the higher derivatives~$\partial_t^{n_1},\,\partial_x^{n_2}$ and~$\wpar_\gamma^{\bm{\eta}}$, for~$\bm{\eta}=(\eta_1,\cdots,\eta_{n_3})\in (\BV_0)^{  n_3}$. 
We denote~$\cC^{\cN}(\Omega)$ the 
space of functions admitting continuous such pathwise derivatives of order~$\cN\subset\bN^3_0$.
We can now define PPDEs on~$\Omega$ similar to~\eqref{eq:mainPDE}
\begin{equation}\label{eq:BV_PPDE}
\def\arraystretch{1.5}
\left\{\hspace{-.4cm}
\begin{array}{rl}
&\cL_t u(t,x,\gamma)= g(t,x,\gamma), \\ &u(T,x,\gamma)=\phi(T,x,\gamma),
\end{array}
\right.
\end{equation}
where~$g,\phi:\Omega\to\bR$ are measurable functions and the linear operator $\cL_t$ is a combination of derivatives of all arguments, analogue of~$\tcL_t$ \eqref{eq:Def_tcL} for functions independent of~$\gamma_{[0,t]}$, albeit with a pathwise derivative of Gateaux type instead of Fréchet.
More precisely, there exist~$\cN\subset\bN^3_0$, $a_n\in\bR$ for all~$n\in\cN$ and~$\bm{\eta}\in (\BV_0)^{n_3}$ such that
\begin{equation}\label{eq:def_cL_BV}
    \cL_t := \sum_{n=(n_1,n_2,n_3)\in\cN} a_n \partial_t^{n_1}\,\partial_x^{n_2} \, \wpar_\gamma^{\bm{\eta}},
\end{equation}
where the sum is taken over multi-indices~$n\in\cN$.


\subsection{A kernel method for PPDEs} \label{subsec:method}
We saw that the solution to the original PPDE of the type~$\tcL_t u = f$ on~$\tom$ is a function on continuous paths, whereas signature kernels are defined over paths of bounded variation. For this reason we defined an analogous weaker PPDE~$\cL_t u=f$ on~$\Omega\subset\tom$ (defined in~\eqref{eq:def_Omega},\eqref{eq:def_Omegatilde}) which solution can be suitably represented with signature kernels. Functions admitting Fréchet derivatives also admit Gateaux derivatives and they are equal, therefore solutions to the former PPDE are also solutions to the latter. This will allow us in Theorem~\ref{th:MainCvg} to bridge the gap between the equation we solve and the equation we target.
Now that we have this picture in mind, we can present the gist of our method, inspired by~\cite{chen2021solving} and extended to the path-dependent case. 

We consider a kernel $\kappa$, indexed on a compact subset~$\cX$ of $\Omega$, as product of an RBF kernel and a signature kernel indexed on $\mathbb R^{d+1}$ and $\BV_0=\cC^{1-var}([0,T],\bR^e)$ respectively. We call $\cH$ its associated RKHS. 
We pick~$m+n$ collocation points~$\omega^1,...,\omega^{m+n} \in \cX$ of the form~$\omega^i=(t^i, x^i, \gamma^i)$ and order them in such a way that $0 \leq t^i < T$ for $i=1,...,m$ and  $t^i = T$ for $i=m+1,...,m+n$. The former belong to the interior of the domain while the latter correspond to the boundary (terminal) condition. The main idea consists in approximating the solution of the PPDE \eqref{eq:BV_PPDE} with a minimiser of the following optimal recovery problem
\begin{align}\label{eq:OptiRecovery}
    \min_{u \in \cH} \frac{1}{2}\norm{u}^2_{\cH}  \quad \text{such that} \quad 
    \begin{cases}
        \mathcal{L}_t u(\omega^i)  = g(\omega^i) &   i = 1,...,m 
        \\
        \ \  u(\omega^i) = \phi(\omega^i) &  i  = m+1,...,m+n.
     \end{cases}
\end{align}
In other words, we search for the function with minimal RKHS norm that satisfies the PDE constraints at all the collocation points. 

This problem formulation raises a number of questions, of both theoretical and practical importance:
   \begin{enumerate}\label{enum:questions}
        \item Does the domain of $\cL_t$ contain $\cH$ ? 
        \item Does the minimisation problem \eqref{eq:OptiRecovery} have a unique solution $u_{m,n}$?
        \item How does one solve this optimisation over an infinite-dimensional space ?
        \item Does the PPDE \eqref{eq:mainPDE} have a unique classical solution $u^\star$ ? 
        \item Does $u_{m,n}$ converge to $u^\star$ as $m,n\to\infty$ ?
    \end{enumerate}
After designing our kernel and RKHS in Section~\ref{sec:kernels}, we answer positively to all these questions under appropriate conditions. In order to successively execute limit arguments, we exploit that~$\cX$ is a compact set of~$\Omega$, which we define explicitly under the topology induced by~$\bm{d}$. 
The interested reader may find a summary of the answers as follows. 
\begin{enumerate}\label{enum:answers}
    \item Yes, Proposition~\ref{proposition:C2} proves this assertion.
    \item Yes, as this is a type of optimal recovery problem. See Section \ref{sec:or-wp}.
    \item The optimal recovery problem reduces to a finite-dimensional quadratic optimisation problem with linear constraints. The latter can be solved via gradient descent, aloof from sophisticated learning algorithms. See Section \ref{sec:or-wp}.
    \item As pointed out in Section~\ref{subsec:examples}, existence and uniqueness of this PPDE holds under some conditions, which are thoroughly checked in the given references for the examples of interest. 
    \item Theorem \ref{th:MainCvg} ensures convergence towards the true solution over a compact subset $\cX$ of $\Omega$ under two different sets of assumptions, both requiring that~$u^\star$ is the unique solution of the PPDE~\eqref{eq:mainPDE} and that the collocation points form a countable dense set of $\cX$.
\end{enumerate}

\textbf{PDE Scheme.} \ \  In particular, after having checked that the constraints of the optimal recovery problem~\eqref{eq:OptiRecovery} make sense, we can solve this optimisation problem explicitly due to a variant of the representer theorem involving higher order derivatives \cite{owhadi2019operator}, which gives an answer to Question (2) of our list~\ref{enum:questions}. Notably, for all~$n,m\in\bN$ and collocation points~$(\omega^i)_{i=1}^m \in \cX^\circ$, $(\omega^{i})_{i=m+1}^{m+n}\in\partial\cX$, there exist weights~$\alpha_1, \cdots, \alpha_{m+n} \in \bR$ such that the optimal recovery problem~\eqref{eq:OptiRecovery} is well-posed and has a unique minimiser~$ u_{m,n} \in \cH$ of the form
\begin{equation}\label{eqn:exact_sol}
     u_{m,n} = \sum_{i=1}^{m+n} \alpha_i \kappa(\omega^i,\cdot).
\end{equation}
Thus, the coefficients~$\alpha_1, \cdots, \alpha_{m+n}$ can be obtained by simply solving the finite-dimensional quadratic optimisation problem with linear constraints
    \begin{equation}\label{eqn:optim_system}
        \min_{\boldsymbol \alpha \in \bR^{m+n}} \half \boldsymbol \alpha^\top \cK \boldsymbol \alpha \quad \text{subject to} \quad \widetilde \cK \boldsymbol \alpha = \boldsymbol b, 
    \end{equation}
    where $\boldsymbol \alpha = (\alpha_1,...,\alpha_{m+n}) \in \mathbb{R}^{m+n}$,  $\cK \in \mathbb{R}^{(m+n) \times (m+n)}$ is the matrix with entries $\cK_{i,j} = \kappa(\omega^i,\omega^j)$, and where $\widetilde \cK \in \mathbb{R}^{(m+n) \times (m+n)}$ and $\boldsymbol b \in \bR^{m+n}$ are defined as follows
\begin{equation}\label{eqn:linear_system}
    \widetilde \cK := 
    \begin{pmatrix}
        \mathcal{L}_t \kappa(\omega^1,\omega^1) & ... & \mathcal{L}_t \kappa(\omega^1,\omega^{m+n}) \\
        \vdots & \ddots & \vdots 
        \\
        \mathcal{L}_t \kappa(\omega^m,\omega^1) & ... & \mathcal{L}_t \kappa(\omega^m,\omega^{m+n})\\
        \kappa(\omega^{m+1},\omega^1) & ... &  \kappa(\omega^{m+1},\omega^{m+n})
        \\
        \vdots & \ddots & \vdots 
        \\
        \kappa(\omega^{m+n},\omega^1) & ... &  \kappa(\omega^{m+n},\omega^{m+n})
    \end{pmatrix}
    \quad 
    \text{and}
    \quad 
    \boldsymbol b := 
    \begin{pmatrix}
        g(\omega^1) \\
        \vdots\\
        g(\omega^m) \\
        \phi(\omega^{m+1})\\
        \vdots \\
        \phi(\omega^{m+n})
    \end{pmatrix}.
\end{equation}

This lifts the main computational obstacle and opens a clear path to solving the optimisation problem, thereby resolving Question (3) of~\ref{enum:questions}.

\begin{remark}\label{remark:2}
For clarity, we are not able at this stage to verify the assumptions of Theorem \ref{th:MainCvg} in our examples of interest. We believe however that this paper is the first step towards a more general theory, which shall grow as our understanding of these PPDEs develop. In particular,  the assumption that $\mathcal{X}$ is a compact subspace of $\Omega$ (the space of bounded variation paths) could be lifted using at least two strategies we can think of: 1) by carefully rescaling the signature coefficients in a path-by-path exploiting the concept of \emph{robust signatures} proposed in~\cite{chevyrev2018signature}; or 2) considering a different class of topologies on \emph{weighted-spaces} as discussed in~\cite{cuchiero2023global}. Both of these approaches essentially ensure that the range of the signature map remains "bounded" even when the domain is non-compact. The former does so by changing the feature map, whilst the second changes the underlying path-space topology. At the cost of increased technicality one can also replace the space of bounded variation with a space of arbitrary $p$-variation.  Uniqueness of the solution to the PPDE should thus be attainable by classical probabilistic techniques (i.e. Feynman--Kac theorem). For more details about the assumptions we refer to the discussion below Theorem \ref{th:MainCvg}.
\end{remark}

\begin{remark}
    We stress that kernel-based supervised learning approaches like kernel ridge regression require a dataset of input–output pairs, i.e. collocation paths together with corresponding solution values. In our setting such dataset is unavailable: the solution of the PPDE is exactly what we seek to compute. Our solver is unsupervised, in the sense that it enforces the PPDE operator at collocation points as constraint in the kernel-based optimisation in~\eqref{eq:OptiRecovery}, without needing prior solution values. Moreover, while conditional expectations can represent PPDE solutions in special cases, for general PPDEs no such probabilistic representation is guaranteed. Our approach therefore applies in broader settings where regression-based methods are not directly applicable. 

\end{remark}

\begin{remark}
    We note that an alternative approach to our optimal recovery problem would consist in considering instead a ``kernel ridge regression''--type formulation, in which one minimises a penalised empirical loss of the PDE residual over the RKHS under a norm constraint or regularisation. A prototypical example is
    \begin{equation*}
        \min_{u \in \cH} 
        \sum_{i=1}^m \big|\mathcal{L}_t u(\omega^i) - g(\omega^i)\big|^2 
        + \sum_{i=m+1}^{m+n} \big|u(\omega^i) - \phi(\omega^i)\big|^2
        + \lambda \|u\|_{\cH}^2 ,
    \end{equation*}
    for some regularisation parameter $\lambda > 0$, or more generally a formulation based on an $L^2$--loss of the PDE residual over a measure on the domain. Results of this type can be found, for instance, in \cite[Theorem~2]{vito2004some}. Such kernel ridge regression (KRR) discretisation based on the \emph{same} kernel ansatz
    \[
    u(\cdot) \;=\; \sum_{j=1}^{m+n} \alpha_j \,\kappa(\cdot,\omega^j)
    \]
    leads to the unconstrained penalised least-squares problem
    \begin{equation*}
        \min_{\boldsymbol \alpha \in \bR^{m+n}} 
        \;\big\| \widetilde \cK\,\boldsymbol \alpha - \boldsymbol b \big\|_2^2
        \;+\; \lambda \,\boldsymbol \alpha^\top \cK \boldsymbol \alpha,
        \qquad \lambda > 0,
    \end{equation*}
    where \emph{the same matrices} $\cK$, $\widetilde \cK$ and right-hand side $\boldsymbol b$ appear.  The first term measures the squared $\ell^2$-residual of the PDE and boundary/terminal constraints at the collocation points, while the second term penalises the RKHS norm of $u$ via
    \[
    \|u\|_{\cH}^2 = \boldsymbol \alpha^\top \cK \boldsymbol \alpha .
    \]
    The associated first-order optimality conditions yield the normal equations
    \begin{equation*}
        \big(\widetilde \cK^\top \widetilde \cK + \lambda \cK \big)\boldsymbol \alpha
        \;=\;
        \widetilde \cK^\top \boldsymbol b .
    \end{equation*}

Thus, both discretisations are built from the same kernel expansion and the same matrices $(\cK,\widetilde \cK,\boldsymbol b)$.  
The only structural difference is that the optimal recovery formulation imposes
\[
\widetilde \cK\,\boldsymbol \alpha = \boldsymbol b
\]
as hard constraints and minimises the RKHS norm subject to these constraints, whereas kernel ridge regression replaces these constraints by a quadratic penalty and enforces them only in a least-squares sense. In particular, for any fixed $\lambda>0$ one generally has
\[
\widetilde \cK\,\boldsymbol \alpha \neq \boldsymbol b,
\]
so that the PDE and boundary/terminal conditions are not satisfied exactly at the collocation points, with the trade-off between fidelity to the constraints and RKHS norm controlled by the regularisation parameter $\lambda$. Formally, the optimal recovery problem may be viewed as the zero-regularisation or infinite-penalty limit of kernel ridge regression: as $\lambda \downarrow 0$, any accumulation point of minimisers of the penalised problem that satisfies 
$\widetilde \cK\,\boldsymbol \alpha = \boldsymbol b$ coincides with the minimum-norm solution of the constrained problem. Conversely, for fixed $\lambda>0$, the kernel ridge solution does not in general satisfy the PDE exactly at the collocation points.
\end{remark}

Finally, let us summarise the several levels of approximation. We numerically solve the optimisation problem~\eqref{eq:OptiRecovery}, which is a discretisation of the PPDE~\eqref{eq:BV_PPDE} over~$\Omega$. Under the right set of assumptions, the sequence of solutions to the optimisation problem converges the solution to the PPDE~\eqref{eq:mainPDE} over~$\widetilde{\Omega}$. Finally, the latter is an approximation of the PPDE with a singular direction~$\bm{\eta}$. While our setup is designed to match PPDEs of second generation (arising from Volterra processes), it could be adapted to the first generation of Dupire (corresponding to path-dependent payoffs) at the cost of redeveloping the theory with the corresponding spaces, notions of derivatives and topologies.

\section{Signature kernels}
\label{sec:kernels}



Our numerical solver is based on so-called signature kernels, a special class of kernels indexed on $\BV_0=\cC^{1-var}([0,T],\bR^e)$. In this section we enumerate the main properties of such signature kernels and highlight important aspects related to their numerical evaluation. We begin by recalling definition and properties of classical kernels.

\subsection{Classical kernels} A scalar-valued kernel on a set $\widehat{\Omega}$ is a symmetric function of the form $\kappa : \widehat{\Omega} \times \widehat{\Omega} \to \mathbb{R}$. Such kernel is said to be positive semidefinite if for any $n \in \mathbb{N}$ and points $\omega^1,...,\omega^n \in \widehat{\Omega}$, the Gram matrix $\mathcal{K}:=(\kappa(\omega^i,\omega^j))_{i,j}$ is positive semidefinite, i.e. if for any $\alpha_1,...,\alpha_n \in \mathbb{R}$
\begin{equation*}
    \sum_{i=1}^n\sum_{j=1}^n \alpha_i \alpha_j\kappa(\omega^i,\omega^j) \geq 0.
\end{equation*}
Recall that a Hilbert space $\cH$ of functions defined on a set $\widehat{\Omega}$ is a \emph{reproducing kernel Hilbert space} (RKHS) over $\widehat{\Omega}$ if, for each $\omega \in \widehat{\Omega}$, the point evaluation functional at $\omega$, $f \mapsto f(\omega)$,  is a continuous linear functional, i.e. there exists a constant $C_\omega \geq 0$ such that
$$|f(\omega)| \leq C_\omega \norm{f}_\cH, \quad \text{for all } f \in \cH.$$
The classical Moore-Aronszajn Theorem \cite{aronszajn1950theory} ensures that if $\kappa$ is a positive semidefinite kernel, then there exists a unique RKHS $\cH$ such that $\kappa$ has the following \emph{reproducing property}  
\begin{equation}\label{eqn:reproducing_property}
    \left\langle \kappa(\omega,\cdot), f\right\rangle_\cH = f(\omega), \quad \text{for all } f \in \cH \text{ and } \omega \in \widehat{\Omega}.
\end{equation}
When dealing with questions related to function approximation, one is often interested in understanding how well elements of an RKHS can approximate a function in a given class. One important such class is the family of continuous functions, leading to the notion of \emph{cc-universality}  \cite{sriperumbudur2010hilbert}. If the set~$\widehat{\Omega}$ is equipped with an arbitrary topology and $\kappa:\widehat{\Omega}\times\widehat{\Omega}\rightarrow\mathbb{R}$ is a continuous,
symmetric, positive semidefinite kernel on $\widehat{\Omega}$ with RKHS $\cH$, then we say that $\kappa$ is
cc-universal on $\widehat{\Omega}$ if, for every compact subset $\mathcal{X}\subset\widehat{\Omega}$, $\cH$ is dense in $\cC(\mathcal{X})$ in the topology of uniform convergence.

Kernels on Euclidean spaces such as linear, polynomial, Gaussian and Mat\'ern kernels have been extensively studied in prior literature. Arguably, the most popular choice (which is also the one we make in our experiments) of cc-universal kernel on $\bR_+\times\bR^d$ is the \emph{radial basis function} (RBF) kernels $g_\sigma : \bR_+\times\bR^d \times \bR_+\times\bR^d\to \bR$ defined for $(s,x),(t,y) \in \bR_+\times\bR^d$ as
\begin{equation}\label{eqn:rbf_kernel}
    g_\sigma((s,x),(t,y)) = \exp \left( -\frac{\norm{(s,x)-(t,y)}^2}{2\sigma^2}\right), \quad  \sigma \in \bR.
\end{equation}
To handle solutions of PPDEs we need to introduce kernels indexed on $\BV_0$. \emph{Signature kernels}  \cite{kiraly2019kernels, salvi2021signature} provide a natural class of cc-universal of kernels on $\BV_0$ as we shall discuss next. 

\subsection{Signature kernels} 
The definition of signature kernel requires an initial algebraic setup.  Denote by $\otimes$ the standard tensor product of vector spaces. Let $V$ be a Hilbert space with inner product~$\langle \cdot, \cdot \rangle_V$. In this section, we adopt a more general notation, although we will specifically consider $V$ as the vector space $\mathbb{R}^e$ in the sequel of the paper. For any $n \in \bN$, the canonical Hilbert-Schmidt inner product $\langle \cdot, \cdot \rangle_{V^{\otimes n}}$ on $V^{\otimes n}$ is defined as
\begin{equation}\label{eqn:inner_prod_n}
    \left\langle a,b \right\rangle_{V^{\otimes n}} = \prod_{i=1}^n \left\langle a_i,b_i \right\rangle_V
\end{equation}
where $a=(a_1,...,a_n)$ and $b=(b_1,...,b_n)$ are tensors in $V^{\otimes n}$. Let $T((V)) = \prod_{n=1}^\infty V^{\otimes n}$ the vector space of formal tensor series and $T(V) \subset T((V))$ the space of formal tensor polynomials defined as 
\begin{equation*}
    T(V) = \{a \in T((V)) \mid a \text{ has a finite number of non-zero entries}\}.
\end{equation*}
There are many ways of extending by linearity the inner product in~\eqref{eqn:inner_prod_n} to an inner product on $T(V)$. The simplest way to achieve this is by defining the inner product as follows
\begin{equation}\label{eqn:l_ip}
    \langle a, b \rangle_{T(V)} = \sum_{n=0}^\infty \left\langle a_n, b_n \right\rangle_{V^{\otimes n}}
\end{equation}
for any $a=(a_0, a_1,...), b=(b_0,b_1...)$ in $T(V)$. 

\begin{remark}
    A more general class of inner products was introduced in \cite{cass2024weighted} by means of a weight function  $\phi : \bN_0\to \bR_+$ in the following way
\begin{equation}\label{eqn:w_ip}
    \langle a, b \rangle_{T(V)} = \sum_{n=0}^\infty \phi(n) \left\langle a_n, b_n \right\rangle_{V^{\otimes n}}.
\end{equation}
\end{remark}
We denote by $\overline{T(V)}$ the Hilbert space obtained by completing $T(V)$ with respect to $\langle \cdot, \cdot \rangle_{T((V))}$. 
Supposing that $V$ is $d$-dimensional, for some $d > 1$, it will be convenient to introduce the set of \emph{words}~$\mathbb{W}$ in the non-commuting letters $\{1,\dots, d\}$, i.e. ordered $m$-tuples $w = (i_1,i_2,\cdots, i_m) \in \{1,\dots, d\}^m$ with $|w| := m \in \mathbb N$.
The principal ingredient needed to define the signature kernel is a classical transform in stochastic analysis known as the \emph{signature}. Next we recall its definition. 
Let $\BV(\bT,V)$ be defined as in Section \ref{subsec:path_spaces}
where the norm in the definition of $p$-variation is the norm in $V$. For any closed interval $[t, t'] \subset \bT$ and any path $\gamma\in \BV(\bT,V)$, the signature of $\gamma$ over $[t, t']$ is the following infinite collection
$$S(\gamma)_{t,t'} = \left(S(\gamma)^{(w)}_{t,t'}\right)_{w \in \mathbb{W}} \in \overline{T(V)} \cong l^2(\mathbb W)$$ 
 of Riemann-Stieltjes iterated integrals
\begin{equation}\label{eqn:iterated_integrals}
S(\gamma)^{(w)}_{t,t'} = \int_{t<t_{1}<...<t_{m}<t'} \D\gamma_{t_{1}}^{(i_1)} ... \D\gamma_{t_m}^{(i_m)}, \quad \text{for any } w=(i_1,...,i_m) \in \mathbb W.
\end{equation}

When it is clear from the context, we will suppress the dependence on the interval and instead denote the signature of $\gamma$ by $S(\gamma)$. We refer the interested reader to \cite{kidger2019deep, fermanian2023new} for an account and examples on the use of signature methods in machine learning.

The signature of a BV path $\gamma$ can be equivalently defined as the solution of a $\overline{T(V)}$-valued differential equations controlled by $\gamma$, as stated in the following lemma.

\begin{lemma}\label{lemma:sig_cde}
    Let $\gamma \in \BV(\bT,V)$. Then, the linear controlled differential equation 
    \begin{equation}\label{cde sig}
    dY_s=Y_s \cdot \D\gamma_s  \text{ on $[t,t']$, \ started at } A \in \overline{T(V)} 
    \end{equation}
    admits $Y_s = A \cdot S(\gamma)_{[t,s]}$ as unique solution, where the product $\cdot$ on $\overline{T(V)}$ is defined for any \newline $v=\left(v_{0},v_{1},...\right)  $ and $w=\left(  w_{0},w_{1},...\right)$ in
    $\overline{T(V)}$ as the element
    $v \cdot w=\left( z_{0},z_{1},...\right)$  in
    $\overline{T(V)}$  such that
    \begin{equation*}
    z_k = \sum_{j=0}^k v_j \otimes w_{k-j} \in V^{\otimes k}, \quad \forall k \in \mathbb{N}
    \cup\left\{  0\right\},
    \end{equation*}
    and where $\D\gamma_t$ is identified with $(0,\D\gamma_t,0,...) \in \overline{T(V)}$. In particular, the signature $S(\gamma)_{t,t'}$ is the unique solution to (\ref{cde sig}) with initial condition $A = \mathbf{1} := (1,0,0,...) \in \overline{T(V)}$.
\end{lemma}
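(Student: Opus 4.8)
The plan is to rewrite the linear controlled differential equation \eqref{cde sig} in its Riemann--Stieltjes integral form
\[
Y_s = A + \int_t^s Y_r \cdot d\gamma_r, \qquad s \in [t,t'],
\]
and to exploit that, once projected onto the tensor levels, this is a \emph{triangular} hierarchy. Writing $Y_s = (Y_s^{(0)},Y_s^{(1)},\dots)$ with $Y_s^{(k)} \in V^{\otimes k}$ and recalling that $d\gamma_r$ is identified with $(0,d\gamma_r,0,\dots)$, the definition of the product $\cdot$ gives $(Y_r \cdot d\gamma_r)_0 = 0$ and $(Y_r \cdot d\gamma_r)_k = Y_r^{(k-1)} \otimes d\gamma_r$ for $k \ge 1$, so that
\[
Y_s^{(0)} \equiv A_0, \qquad Y_s^{(k)} = A_k + \int_t^s Y_r^{(k-1)} \otimes d\gamma_r \quad (k \ge 1).
\]
Each level is thus recovered from the previous one by a single Riemann--Stieltjes integration against $\gamma$, which is legitimate since $\gamma \in \BV$ and, inductively, the integrands are continuous of bounded variation. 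I would then carry out both existence and uniqueness by induction on $k$ along this hierarchy; this is also what sidesteps the fact that $\cdot$ need not be a bounded bilinear map on $\overline{T(V)} \times \overline{T(V)}$ for the norm from \eqref{eqn:l_ip}.

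\emph{Existence.} I would verify that $Y_s := A \cdot S(\gamma)_{[t,s]}$ satisfies the hierarchy above. The single identity needed is the recursion obtained by peeling off the outermost integral in \eqref{eqn:iterated_integrals}: for every word $(i_1,\dots,i_m)$,
\[
S(\gamma)^{(i_1,\dots,i_m)}_{[t,s]} = \int_t^s S(\gamma)^{(i_1,\dots,i_{m-1})}_{[t,r]}\, d\gamma_r^{(i_m)},
\]
equivalently $S(\gamma)_{[t,s]} = \mathbf 1 + \int_t^s S(\gamma)_{[t,r]} \cdot d\gamma_r$ componentwise. Left-multiplying by $A$ and using associativity of $\cdot$ gives $A \cdot S(\gamma)_{[t,s]} = A + \int_t^s \bigl(A \cdot S(\gamma)_{[t,r]}\bigr) \cdot d\gamma_r$, with value $A \cdot \mathbf 1 = A$ at $s = t$; the case $A = \mathbf 1$ is exactly the final assertion of the lemma. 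That $S(\gamma)_{[t,s]}$ (hence the whole solution when $A = \mathbf 1$) belongs to $\overline{T(V)}$ follows from the classical factorial bound $\abs{S(\gamma)^{(m)}_{[t,s]}} \le \abs{\gamma}_{1-var;[t,s]}^{\,m}/m!$ for signatures of bounded-variation paths.

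\emph{Uniqueness, and the main point of care.} If $Y$ and $\widetilde Y$ both solve \eqref{cde sig} with the same initial condition, then $Z := Y - \widetilde Y$ satisfies $Z_s^{(0)} \equiv 0$ and $Z_s^{(k)} = \int_t^s Z_r^{(k-1)} \otimes d\gamma_r$, and a trivial induction on $k$ forces $Z^{(k)} \equiv 0$ for all $k$, i.e. $Y = \widetilde Y$; alternatively one truncates to the finite-dimensional algebra $\bigoplus_{n=0}^N V^{\otimes n}$, where the truncated product is continuous and classical Picard--Lindel\"of applies in its Carath\'eodory form ($\gamma$ being only of bounded variation), and then lets $N \to \infty$ using consistency of the truncated solutions. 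Since the induction does all the work there is no substantive obstacle; the one thing to resist is setting up a single Banach fixed-point argument directly on $\overline{T(V)}$, which fails because the tensor product $\cdot$ is not jointly continuous for the Hilbert norm \eqref{eqn:l_ip}. Working level by level — equivalently, in the truncations $\bigoplus_{n=0}^N V^{\otimes n}$ — together with the factorial decay of the signature's tensor levels, is precisely what makes the construction and the identification of the solution go through.
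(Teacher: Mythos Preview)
Your argument is correct: the level-by-level triangular structure gives existence by direct verification and uniqueness by the trivial induction on $k$, and this is essentially the same induction the paper uses to \emph{identify} the solution once existence and uniqueness have been established.

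Where you diverge from the paper is in the treatment of existence and uniqueness. The paper does precisely what you warn against: it applies Picard--Lindel\"of directly on $\overline{T(V)}$ (citing \cite[Thm.~1.28]{lyons2007differential}), observing that the CDE vector field $f:\overline{T(V)}\to L(V,\overline{T(V)})$, $f(Y)(a)=Y\cdot a$, is linear in $Y$. Your caution that the full tensor product $\cdot$ is not jointly continuous on $\overline{T(V)}$ is true but beside the point here: only right-multiplication by elements of the form $(0,a,0,\dots)$ with $a\in V$ is needed, and for that one has $\|Y\cdot a\|_{\overline{T(V)}}^2 = \sum_{k\ge 1}\|Y_{k-1}\otimes a\|^2 = |a|^2\|Y\|_{\overline{T(V)}}^2$, so $f$ is a bounded linear map of norm $1$ and the standard fixed-point argument goes through in one step. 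Your route trades this observation for an elementary induction that stays in finite-dimensional truncations; both are valid, but the paper's is shorter once one notices that the relevant right-multiplication is bounded.
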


    
\begin{proof}
    For any $a \in V$, the map $f_a:\overline{T(V)} \rightarrow L\left(  V,\overline{T(V)}\right)$ given by
    $f_a \left(  Y\right) = Y\cdot a$ is well defined and linear. Thus, it can
    easily been seen to satisfy the conditions of the classical versions of Picard-Lindel\"of theorem for Young CDEs (e.g. \cite[Thm. 1.28]{lyons2007differential}). Therefore, (\ref{cde sig}) admits a unique solution in $\overline{T(V)}$. 
    The fact that any solution $Y_s=\left(  Y_s^{0},Y_s^{1},Y_s
    ^{2},...\right)$ to (\ref{cde sig}) in $\overline{T(V)}$ must satisfy $Y_s^{k}= (A \cdot S\left(  \gamma\right)_{t,s})^{k}$ for any $k$ in $\mathbb{N}$ and any $s$ in $\left[t,t'\right]$ can be shown by a simple induction on $k$.
\end{proof}

\begin{remark}
    By construction, the iterated integrals in (\ref{eqn:iterated_integrals}) admit the following recursive structure
    \begin{equation*}
        S(\gamma)^{(w)}_{t,t'} = \int_{t}^{t'}S(\gamma)^{(w_-)}_{t,t'} d \gamma^{(i_m)}_u, \quad \text{where } w_- = (i_1,...,i_{m-1}).
    \end{equation*}
    Thus the coordinates of the signature, when taken in isolation, do not satisfy a differential equation.
\end{remark}

We can now define the signature kernel as the inner product of two signatures.
\begin{definition}\label{def:signature_kernel}
    Let~$0\le s<s'\le T,\,0\le t<t'\le T$. Define the signature kernel $\kappa_{\text{sig}} :\BV([t,t'],V)\times\BV([s,s'],V)\to \bR$~as the following symmetric function 
    \begin{equation*}
        \kappa_{\text{sig}}(\gamma,\tau) = \left\langle S(\gamma)_{t,t'}, S(\tau)_{s,s'} \right\rangle_{\overline{T(V)}} = \sum_{k=0}^\infty \sum_{w \in \mathbb W : |w| = k} S(\gamma)_{t,t'}^{(w)}S(\tau)_{s,s'}^{(w)}.
    \end{equation*}
\end{definition}
\begin{remark}
    Note that the two paths $\gamma,\tau$ need not be defined on the same time interval. As for the norms, we will omit the subscript~$t,t'$ when it is clear from the context.
\end{remark}

It was proved in \cite{salvi2021signature} that the signature kernel can be realised as the solution of a Goursat PDE as stated in the next lemma; this effectively provides a \emph{kernel trick} for the signature kernel, i.e. a way of computing it without reference to the signature map. 
\begin{lemma}\cite[Theorem 2.5]{salvi2021signature}\label{lemma:signature_kernel_trick} Let~$[s,s'],[t,t'] \in [0,T]$ and $\gamma\in\BV([t,t'],V),\,\tau \in \BV([s,s'],V)$ be two paths. Then, the signature kernel of $\gamma, \tau$ is realised as the solution of the following two-parameter integral equation 
\begin{equation}\label{eqn:signature_kernel_PDE}
    \kappa_{\text{sig}}(\gamma, \tau) = \kappa_{\text{sig}}(\gamma, \tau)_{t',s'} \quad \text{where} \quad \kappa_{\text{sig}}(\gamma, \tau)_{r,q} = 1 + \int_s^{q}\int_t^{r} \kappa_{\text{sig}}(\gamma, \tau)_{u,v}\langle \D\gamma_u,d\tau_v \rangle_V,
\end{equation}
and
with boundary conditions $\kappa_{\text{sig}}(\gamma, \tau)_{t,q} = \kappa_{\text{sig}}(\gamma, \tau)_{r,s} = 1$ for all~$q\in[s,s'],r\in[t,t']$.
\end{lemma}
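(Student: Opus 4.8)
The plan is to compute the inner product $\kappa_{\text{sig}}(\gamma,\tau)_{r,q} = \langle S(\gamma)_{t,r}, S(\tau)_{s,q} \rangle_{\overline{T(V)}}$ word by word and exploit the recursive structure of the iterated integrals recalled in the remark above. Fix an orthonormal basis $(e_i)$ of $V$ and write $\gamma^{(i)} = \langle \gamma, e_i\rangle_V$. Grouping words by their length, the empty word contributes $1\cdot 1 = 1$; for each length $m\ge 1$ I would split every word $w$ as $w = (w',i)$ with $|w'| = m-1$ and peel off the last letter, using $S(\gamma)^{(w',i)}_{t,r} = \int_t^r S(\gamma)^{(w')}_{t,u}\,d\gamma^{(i)}_u$ and the analogous identity for $\tau$. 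This rewrites the length-$m$ contribution to $\kappa_{\text{sig}}(\gamma,\tau)_{r,q}$ as $\sum_i \sum_{|w'|=m-1}\big(\int_t^r S(\gamma)^{(w')}_{t,u}\,d\gamma^{(i)}_u\big)\big(\int_s^q S(\tau)^{(w')}_{s,v}\,d\tau^{(i)}_v\big)$.

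Next I would apply Fubini to the product of the two single Riemann–Stieltjes integrals (the signed measures $d\gamma^{(i)}$, $d\tau^{(i)}$ are finite because $\gamma,\tau\in\BV$) to turn the length-$m$ contribution into $\sum_i \int_t^r\!\int_s^q \big(\sum_{|w'|=m-1} S(\gamma)^{(w')}_{t,u} S(\tau)^{(w')}_{s,v}\big)\,d\gamma^{(i)}_u\,d\tau^{(i)}_v$. Summing over $m\ge 1$ and interchanging that sum with the double integral, the inner bracket becomes exactly $\kappa_{\text{sig}}(\gamma,\tau)_{u,v}$, while $\sum_i d\gamma^{(i)}_u\,d\tau^{(i)}_v = \langle d\gamma_u, d\tau_v\rangle_V$, which yields the claimed integral equation $\kappa_{\text{sig}}(\gamma,\tau)_{r,q} = 1 + \int_s^q\!\int_t^r \kappa_{\text{sig}}(\gamma,\tau)_{u,v}\langle d\gamma_u,d\tau_v\rangle_V$. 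The boundary conditions are then immediate: since $S(\gamma)_{t,t} = \mathbf 1 = (1,0,0,\dots)$, only the empty word survives, so $\kappa_{\text{sig}}(\gamma,\tau)_{t,q} = \langle \mathbf 1, S(\tau)_{s,q}\rangle_{\overline{T(V)}} = 1$, and symmetrically $\kappa_{\text{sig}}(\gamma,\tau)_{r,s} = 1$.

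A more streamlined alternative uses the controlled-differential-equation characterisation from Lemma~\ref{lemma:sig_cde}: since $dS(\gamma)_{t,r} = S(\gamma)_{t,r}\cdot d\gamma_r$ and $d\gamma_r$ lives in the degree-one component, the increment of $S(\gamma)_{t,r}$ shifts tensor levels by one, $(dS(\gamma)_{t,r})_k = (S(\gamma)_{t,r})_{k-1}\otimes d\gamma_r$. Pairing this with the corresponding expression for $\tau$ and using that the Hilbert–Schmidt inner product on $V^{\otimes k}$ factorises over tensor slots gives, at the level of mixed increments, $d_r d_q\,\kappa_{\text{sig}}(\gamma,\tau)_{r,q} = \kappa_{\text{sig}}(\gamma,\tau)_{r,q}\,\langle d\gamma_r, d\tau_q\rangle_V$; integrating twice against the boundary data $\kappa_{\text{sig}}(\gamma,\tau)_{t,\cdot}\equiv\kappa_{\text{sig}}(\gamma,\tau)_{\cdot,s}\equiv 1$ reproduces the integral equation.

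The main obstacle is purely analytic: justifying the interchange of the infinite sum over word length with the double Riemann–Stieltjes integration (and, for the second route, making sense of the level-shifted "derivative" of $S(\gamma)$ for a path that is merely of bounded variation rather than $C^1$, which is why the statement is an integral equation rather than a genuine PDE). Both are handled by the classical factorial decay estimate $\abs{S(\gamma)^{(w)}_{t,r}} \le \norm{\gamma}_{1-var;[t,r]}^{|w|}/|w|!$ and its analogue for $\tau$: this makes the partial sums of the signature series converge uniformly on $[t,r]\times[s,q]$, so Fubini for finite signed measures and dominated convergence apply and every series and integral above is absolutely convergent. The remaining work is bookkeeping with words and iterated integrals.
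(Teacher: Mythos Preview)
The paper does not give its own proof of this lemma: it is quoted verbatim as \cite[Theorem 2.5]{salvi2021signature} and immediately followed by a remark on numerical schemes, with no argument supplied. Your word-by-word computation, peeling off the last letter via the recursion $S(\gamma)^{(w',i)}_{t,r}=\int_t^r S(\gamma)^{(w')}_{t,u}\,d\gamma^{(i)}_u$ and then invoking the factorial bound $|S(\gamma)^{(w)}|\le \norm{\gamma}_{1}^{|w|}/|w|!$ to justify Fubini and the interchange of summation and integration, is exactly the proof given in the cited reference; it is correct as written. Your alternative route through Lemma~\ref{lemma:sig_cde} is also sound and slightly more in the spirit of how the present paper later manipulates the lifted kernel (e.g.\ in the derivation of~\eqref{eqn:pde_derivative_G}), but it ultimately unpacks to the same level-by-level computation once one writes out the Hilbert--Schmidt pairing.
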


Several finite difference schemes are available for numerically evaluating solutions to equation~\eqref{eqn:signature_kernel_PDE}, see \cite[Section 3.1]{salvi2021signature} for details. 

\subsection{Signature kernels from static kernels}
In view of data science applications, instead of taking inner products of signatures directly, it might be beneficial to first lift paths in $V$ to paths with values in the RKHS $\mathcal{H}_g$ of a static kernel $g : V \times V \to \bR$. More precisely, for any $\gamma \in \BV([t,t'],V)$, with~$0\le t<t'\le T$, denote by $\gamma^g : [t,t'] \to \mathcal{H}_g$ the lifted path defined for any $r\in [t,t']$ as $\gamma^g_r = g(\gamma_r,\cdot)$. For the signature of a lifted path $\gamma^g$ to be well-defined we will require that the latter is at least continuous and of bounded variation, i.e. $\gamma^g \in \BV([t,t'],\mathcal{H}_g)$. The following result gives a sufficient condition to ensure this and immediately follows from the definitions of bounded variation and Lipschitz continuity. In particular, in the experimental section, we will make use of the \emph{radial basis function} kernel as $g$, which is $\cC^{\infty}$ and Lipschitz continuous.
\begin{lemma}
    Let $g : V \times V \to \bR$ be a kernel such that for any $v \in V$ the canonical feature map $g(v,\cdot)$ is Lipschitz-continuous and denote by $\cH_g$ its RKHS. Then, for any path $\gamma \in \BV([t,t'],V)$, the lifted path $\gamma^g : [t,t'] \to \mathcal{H}_g$ defined for any $r \in [t,t']$ as $\gamma^g_r= g(\gamma_r, \cdot)$ is continuous and of bounded variation.
\end{lemma}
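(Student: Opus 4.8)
The plan is to observe that the lifted path factors as the composition $\gamma^g = \Phi\circ\gamma$, where $\Phi : V \to \cH_g$ denotes the canonical feature map $\Phi(v) = g(v,\cdot)$, and then to transport the regularity of $\gamma$ through $\Phi$ using nothing more than its Lipschitz property. By the Moore-Aronszajn theorem and the reproducing property~\eqref{eqn:reproducing_property} one has $g(v,\cdot)\in\cH_g$ and $\langle g(v,\cdot), g(w,\cdot)\rangle_{\cH_g} = g(v,w)$ for all $v,w\in V$, so that
\begin{equation*}
\norm{\Phi(v)-\Phi(w)}_{\cH_g}^2 = g(v,v) - 2g(v,w) + g(w,w),
\end{equation*}
and the hypothesis states precisely that $\Phi$ is Lipschitz, i.e. there is $L\ge 0$ with $\norm{\Phi(v)-\Phi(w)}_{\cH_g}\le L\,\abs{v-w}$ for all $v,w$. (For the RBF kernel $g_\sigma$ of~\eqref{eqn:rbf_kernel} the elementary inequality $1-\E^{-x}\le x$ gives $L = 1/\sigma$.)

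Continuity of $\gamma^g$ is then immediate: $\gamma$ is continuous by assumption and $\Phi$ is Lipschitz, hence uniformly continuous, so $\gamma^g = \Phi\circ\gamma : [t,t']\to\cH_g$ is continuous. For the bounded variation I would fix an arbitrary partition $t = r_0 < r_1 < \cdots < r_N = t'$ of $[t,t']$ and estimate, subinterval by subinterval,
\begin{equation*}
\sum_{i=0}^{N-1}\norm{\gamma^g_{r_{i+1}} - \gamma^g_{r_i}}_{\cH_g} = \sum_{i=0}^{N-1}\norm{\Phi(\gamma_{r_{i+1}}) - \Phi(\gamma_{r_i})}_{\cH_g} \le L\sum_{i=0}^{N-1}\abs{\gamma_{r_{i+1}} - \gamma_{r_i}} \le L\,\abs{\gamma}_{1-var;[t,t']}.
\end{equation*}
The right-hand side is finite since $\gamma\in\BV$ and does not depend on the chosen partition, so taking the supremum over partitions yields $\abs{\gamma^g}_{1-var;[t,t']}\le L\,\abs{\gamma}_{1-var;[t,t']} < \infty$, i.e. $\gamma^g\in\BV([t,t'],\cH_g)$.

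The argument is entirely routine and I do not anticipate any genuine obstacle; the one point worth flagging is that the Lipschitz hypothesis is imposed on the feature map rather than on $g$ itself, and this is exactly what makes the $1$-variation bound go through. With only a bound on the symmetric expression $g(v,v) - 2g(v,w) + g(w,w)$ one controls a quantity of quadratic-variation type and cannot in general conclude finite $1$-variation of $\gamma^g$, whereas the linear Lipschitz estimate passes directly through the partition sum defining $\abs{\cdot}_{1-var}$.
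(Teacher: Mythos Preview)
Your argument is correct and is precisely the routine verification the paper has in mind: it states that the lemma ``immediately follows from the definitions of bounded variation and Lipschitz continuity'' without spelling out details. Your write-up simply makes explicit the composition $\gamma^g=\Phi\circ\gamma$ and the partition estimate, which is exactly the intended proof.
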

Once paths in $\BV([t,t'],V)$ are lifted to paths in $\BV([t,t'],\mathcal{H}_g)$ one can compute their signatures and take their inner products in the same way as before. More precisely, given any path $\gamma \in \BV([t,t'],V)$, the signature $S(\gamma^g)$ of the lifted path $\gamma^g \in \BV([t,t'],\cH_g)$ is a well-defined element of the closure of $T(\mathcal{H}_g)$.
This yields the following definition of lifted signature kernel.

\begin{definition}\label{def:kappag}
    Let $g : V \times V \to \bR$ be a kernel such that for any $v \in V$ the canonical feature map $g(v,\cdot)$ is Lipschitz-continuous and denote by $\cH_g$ its RKHS. Then, for any $[s,s'], [t,t'] \subset [0,T]$, the $g$-lifted signature kernel $\kappa^g_{\text{sig}} : \BV([t,t'],V) \times \BV([s,s'],V) \to \bR$ is defined as follows
    \begin{equation*}
        \kappa^g_{\text{sig}}(\gamma,\tau) = \left\langle S(\gamma^g)_{t,t'},S(\tau^g)_{s,s'}\right\rangle
    \end{equation*} 
    where the inner product is taken in the closure of $T(\mathcal{H}_g)$.
\end{definition}
In the sequel, the intervals~$[t,t']$ and~$[s,s']$  will be understood implicitly from the domain of definition of~$\gamma$ and~$\tau$.  
\Cref{lemma:signature_kernel_trick} directly yields a kernel trick for the $g$-lifted signature kernel $\kappa^g_{\text{sig}}$: 
\begin{equation}\label{eqn:lifted_kernel}
    \kappa^g_{\text{sig}}(\gamma, \tau) = \kappa^g_{\text{sig}}(\gamma, \tau)_{t',s'} \quad \text{where} \quad \kappa^g_{\text{sig}}(\gamma, \tau)_{r,q} = 1 + \int_s^q\int_t^r \kappa^g_{\text{sig}}(\gamma, \tau)_{u,v}\left\langle \D\gamma^g_u,d\tau^g_v \right\rangle_{\cH_g},
\end{equation}
with the boundary conditions
$\kappa^g_{\text{sig}}(\gamma, \tau)_{t,q} = \kappa^g_{\text{sig}}(\gamma, \tau)_{r,s} = 1$ for all~$q\in[s,s'],r\in[t,t']$.

\begin{remark}
    The integration with respect to $\cH_g$ paths is understood in the sense of
    \begin{equation}\label{eq:chain_rule_dg}
    \D\gamma^g_u = \D g(\gamma_u,\cdot)= \sum_{i=1}^d \partial_{x^i} g(\gamma_u,\cdot) \D \gamma^i_u.
    \end{equation}
\end{remark}

\subsection{Derivatives of static kernels}
To approximate solutions to PPDEs, it will be necessary to differentiate a kernel with respect to its input variables. The RBF kernel $g_\sigma$, defined in \eqref{eqn:rbf_kernel}, is clearly~$\cC^\infty$ and its $n^{th}$ order derivative can be obtained using elementary calculus. For example, the first two derivatives admit the following expressions
\begin{align}\label{eq:Gaussian_Kernel_der}
    \frac{\partial g_\sigma(x,y)}{\partial x} &=  \frac{y-x}{\sigma^2}\exp\left(-{\frac{(x-y)^2}{2 \sigma^2}}\right) \\
    \frac{\partial^2 g_\sigma(x,y)}{\partial x^2} &=  \frac{(x-y)^2-\sigma^2}{\sigma^4}\exp\left(-{\frac{(x-y)^2}{2 \sigma^2}}\right)
\end{align}

By the results in \cite{zhou2008derivative}, for all $x\in V$, the derivatives of $g(x,\cdot)$ are elements of $\cH_g$. Furthermore, \cite[Theorem 1]{zhou2008derivative} states that, for any $h\in \cH_g$ and any multi-index $I$,
 $$
 \langle \partial_{x^I}^{\abs{I}} g(x,\cdot), h \rangle_{\cH_g} = \partial_{x^I}^{\abs{I}}  h(x) = \partial_{x^I}^{\abs{I}}  \langle  g(x,\cdot), h \rangle_{\cH_g},
 $$
 where $x^I := \{x^i, i\in I\}$.
 For $n,m\in\bN$ let $I,J$ be multi-indices on $\{1,\cdots,n\}$ and $\{1,\cdots,m\}$ respectively. 
In particular, we have
\begin{equation}\label{eq:reproducing_dg}
\langle \partial_{x^I}^n g(x,\cdot), \partial_{y^J}^{m}  g(y,\cdot) \rangle_{\cH_g}  
= \partial_{x^I}^n \partial_{y^J}^{m}  \langle g(x,\cdot),  g(y,\cdot) \rangle_{\cH_g} 
=  \partial_{x^I}^n  \partial_{y^J}^{m}  g(x,y).
\end{equation}
We make the following standing assumption on the static kernel.
\begin{assumption}\label{assu:statickernel}
$g:V\times V\to\bR$ is four times differentiable and, for all $0\le n,m\le 4$, we have
    \begin{align*}
    \overline{g_{nm}}:=\sup_{x,y\in V} \left(\sum_{\abs{I}=n,\, \abs{J}=m}\abs{\partial_{x^I}^n \,\partial_{y^J}^{m} \, g(x,y)}^2 \right)^\half <\infty.
\end{align*}
\end{assumption}
This assumption is satisfied for the RBF kernel \eqref{eqn:rbf_kernel}.

\subsection{Derivatives of signature kernels} In this section we will only consider paths on intervals of the form~$[t,T],[s,T]$, for~$s,t\in\bT$ and fix~$V=\bR^e$; this way the path spaces boil down to those introduced in Section~\ref{subsec:path_spaces}.
For two paths~$\gamma\in\BV_t$ and~$\tau\in\BV_s$, 
the directional derivative of the signature kernel with respect to its first variable and along the direction of a path $\eta\in\BV_t$ is the Gateaux derivative~\eqref{eq:Gateaux_def} which perturbs the whole path:
\begin{equation}\label{eqn:dir_derivative}
    \partial_\gamma^\eta \kappa_{\text{sig}}(\gamma,\tau) := \lim_{\ep \to 0}\frac{\kappa_{\text{sig}}(\gamma + \ep \eta,\tau) - \kappa_{\text{sig}}(\gamma,\tau)}{\ep}
\end{equation}
It was shown in \cite{lemercier2021siggpde} that the directional derivative in equation~\eqref{eqn:dir_derivative} is such that
\begin{equation*}
    \partial_\gamma^\eta \kappa_{\text{sig}}(\gamma,\tau) = \partial_\gamma^\eta\kappa_{\text{sig}}(\gamma,\tau)_{T,T}
\end{equation*}
where the map $\partial_{\gamma}^{\eta}\kappa_{\text{sig}}(\gamma,\tau) : [t,T] \times [s,T] \to \bR$ solves a two-parameter integral equation  similar to~\eqref{eqn:signature_kernel_PDE}
\begin{align}\label{eqn:pde_derivative}
    \partial_\gamma^\eta \kappa_{\text{sig}}(\gamma,\tau)_{s',t'} = \int_s^{s'}\int_t^{t'} \Big(\partial_\gamma^\eta\kappa_{\text{sig}}(\gamma,\tau)_{u,v}  \big\langle \D \gamma_u,\D\tau_v\big\rangle
    +\kappa_{\text{sig}}(\gamma,\tau)_{u,v} \big\langle \D\eta_u,\D\tau_v\big\rangle \Big).
\end{align}
Leveraging  \eqref{eqn:lifted_kernel}, we can extend this PDE formulation to the lifted kernel and to its second derivative, as we show in Proposition \ref{prop:der_kappag}. For all $\gamma\in \BV_t$, define the map~$G : \BV_t \to \BV([t,T],\cH_g)$  as 
$$
G(\gamma)_u = \gamma^g_u = g(\gamma_u,\cdot), \quad \text{for all   }u\in[t,T].
$$
The proofs of all the results of this section are postponed to Appendix \ref{sec:proofs_kernel} to ease the flow of the paper.
\begin{proposition}\label{prop:der_kappag}
    Let Assumption~\ref{assu:statickernel} holds and~$\gamma \in \BV_t,\tau\in\BV_s$ for~$s,t \in \bT$. Then the following hold for all~$s'\in[s,T]$ and~$t'\in[t,T]$.
    \begin{enumerate}
        \item 
    For any $\eta\in \BV_t$, the directional derivative of the lifted kernel satisfies the following PDE
\begin{align}\label{eqn:pde_derivative_G}
     \partial_\gamma^\eta \kappa^g_{\text{sig}}(\gamma,\tau)_{s',t'} 
     =\int_s^{s'}\int_t^{t'} \left(\partial_\gamma^\eta\kappa^g_{\text{sig}}(\gamma,\tau)_{u,v} \left\langle \D \gamma^g_u, \D \tau^g_v \right\rangle_{\cH_g} + \kappa^g_{\text{sig}}(\gamma,\tau)_{u,v} \left\langle\D\partial^\eta_\gamma G(\gamma)_u, \D \tau^g_v \right\rangle_{\cH_g} \right).
\end{align}
\item For any $\eta,\etab\in \BV_t$, the second derivative of the lifted kernel satisfies the following PDE
    \begin{align}
        \partial_\gamma^{\eta\etab} \kappa^g_{\text{sig}}(\gamma,\tau)_{s',t'} =& \int_s^{s'} \int_t^{t'}  \partial_\gamma^{\eta\etab} \kappa^g_{\text{sig}}(\gamma,\tau)_{u,v} \langle \D\gamma^g_u, \D\tau^g_v\rangle_{\cH_g}
    +\int_s^{s'} \int_t^{t'}  \partial_\gamma^{\eta} \kappa^g_{\text{sig}}(\gamma,\tau)_{u,v} \langle \D\partial_\gamma^{\etab} G(\gamma)_u, \D\tau^g_v\rangle_{\cH_g} \\
    &+ \int_s^{s'} \int_t^{t'}  \partial_\gamma^{\etab} \kappa^g_{\text{sig}}(\gamma,\tau)_{u,v} \langle \D\partial^\eta_\gamma G(\gamma)_u, \D\tau^g_v\rangle_{\cH_g}
    + \int_s^{s'} \int_t^{t'}  \kappa^g_{\text{sig}}(\gamma,\tau)_{u,v} \langle \D\partial^{\eta\etab}_\gamma G(\gamma)_u, \D\tau^g_v\rangle_{\cH_g}.\nonumber
    \end{align}
\end{enumerate}
\end{proposition}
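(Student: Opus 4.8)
\emph{Proof strategy.} The plan is to derive both identities by differentiating the Goursat-type integral equation \eqref{eqn:lifted_kernel} directly with respect to its first path argument, exactly as \cite{lemercier2021siggpde} does to obtain the unlifted formula \eqref{eqn:pde_derivative}, and then to differentiate the resulting equation a second time for part (2). A preliminary step is to record the (twice) Gateaux differentiability of the lifting map $G$. Writing $\gamma_\ep := \gamma + \ep\eta$, Assumption \ref{assu:statickernel} together with the representation results of \cite{zhou2008derivative} recalled above give, for each $u\in[t,T]$, that $\ep\mapsto g((\gamma_\ep)_u,\cdot)$ is twice differentiable in $\cH_g$ with $\partial_\gamma^\eta G(\gamma)_u = \sum_i \partial_{x^i} g(\gamma_u,\cdot)\,\eta^i_u$ and $\partial_\gamma^{\eta\etab} G(\gamma)_u = \sum_{i,j} \partial^2_{x^i x^j} g(\gamma_u,\cdot)\,\eta^i_u\,\etab^j_u$. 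Since the first- and second-order partial derivatives of $g$ are bounded with bounded derivatives (again Assumption \ref{assu:statickernel}), each map $u\mapsto \partial_{x^i}g(\gamma_u,\cdot)$ is Lipschitz hence of bounded variation, and products of BV paths are BV; consequently $\gamma^g$, $\partial_\gamma^\eta G(\gamma)$ and $\partial_\gamma^{\eta\etab}G(\gamma)$ all lie in $\BV([t,T],\cH_g)$, so that every iterated integral in the statement is well defined, with $\D\gamma^g_u$, $\D\partial_\gamma^\eta G(\gamma)_u$, etc.\ understood via \eqref{eq:chain_rule_dg}.

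\emph{Part (1).} I would substitute $\gamma_\ep$ into \eqref{eqn:lifted_kernel}, subtract the equation satisfied by $\gamma$, divide by $\ep$ and let $\ep\downarrow 0$ inside the double Riemann--Stieltjes integral. The integrand $\kappa^g_{\text{sig}}(\gamma_\ep,\tau)_{u,v}\langle \D(\gamma_\ep)^g_u,\D\tau^g_v\rangle_{\cH_g}$ splits into one piece converging to $\partial_\gamma^\eta\kappa^g_{\text{sig}}(\gamma,\tau)_{u,v}\langle\D\gamma^g_u,\D\tau^g_v\rangle_{\cH_g}$ and a second piece arising from the difference quotient of $\D(\gamma_\ep)^g_u$ and converging to $\kappa^g_{\text{sig}}(\gamma,\tau)_{u,v}\langle\D\partial_\gamma^\eta G(\gamma)_u,\D\tau^g_v\rangle_{\cH_g}$, which together yield \eqref{eqn:pde_derivative_G}. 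To make this rigorous one needs (i) well-posedness and Lipschitz dependence on the driving paths of the linear equation \eqref{eqn:lifted_kernel}, obtained by the same Picard fixed-point argument as in Lemmas \ref{lemma:sig_cde} and \ref{lemma:signature_kernel_trick}, with constants controlled by $\norm{\gamma}_{1-var}$, $\norm{\tau}_{1-var}$, $\norm{\eta}_{1-var}$ and the $\overline{g_{nm}}$ of Assumption \ref{assu:statickernel}; and (ii) the interchange of $\lim_{\ep\downarrow0}$ with the double integral, justified by dominated convergence after bounding the difference quotients uniformly in $\ep$ via the mean-value theorem applied to $g$ and its first derivatives.

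\emph{Part (2).} I would then differentiate \eqref{eqn:pde_derivative_G} once more with respect to $\gamma$ along $\etab$, applying the Leibniz rule to each of its two integrands. Differentiating $\partial_\gamma^\eta\kappa^g_{\text{sig}}(\gamma,\tau)_{u,v}$ yields $\partial_\gamma^{\eta\etab}\kappa^g_{\text{sig}}(\gamma,\tau)_{u,v}$ (first term of the claim); differentiating $\D\gamma^g_u$ yields $\D\partial_\gamma^{\etab}G(\gamma)_u$ (second term); differentiating $\kappa^g_{\text{sig}}(\gamma,\tau)_{u,v}$ yields $\partial_\gamma^{\etab}\kappa^g_{\text{sig}}(\gamma,\tau)_{u,v}$ (third term); and differentiating $\D\partial_\gamma^\eta G(\gamma)_u$ yields $\D\partial_\gamma^{\eta\etab}G(\gamma)_u$ (fourth term). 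Summing the four contributions reproduces the displayed second-order PDE. The analytic justification repeats that of part (1): one uses a priori bounds on $\partial_\gamma^\eta\kappa^g_{\text{sig}}$ coming from \eqref{eqn:pde_derivative_G} by the same fixed-point estimate, together with uniform control of the new difference quotients, which now involves partial derivatives of $g$ of two further orders --- this is precisely why Assumption \ref{assu:statickernel} demands four derivatives.

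\emph{Main obstacle.} The algebraic content is only the chain and Leibniz rules; the genuine difficulty is analytic, namely establishing well-posedness of these $\cH_g$-valued linear Goursat equations and rigorously commuting differentiation in $\ep$ with the iterated Riemann--Stieltjes integrals. Both rest on the uniform a priori estimates furnished by the Picard/Gr\"onwall machinery with constants taken from Assumption \ref{assu:statickernel}; once those are in hand, identifying the limiting integrands is routine, if lengthy.
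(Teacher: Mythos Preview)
Your proposal is correct and follows essentially the same route as the paper: differentiate the Goursat equation \eqref{eqn:lifted_kernel} under the integral sign, split the integrand via Leibniz, identify the limiting pieces, and iterate for the second derivative. The only organisational difference worth noting is how the a priori bounds for dominated convergence are obtained. You propose extracting Lipschitz dependence of $\kappa^g_{\text{sig}}(\cdot,\tau)$ on $\gamma$ directly from the kernel integral equation via a Picard/Gr\"onwall argument; the paper instead first proves the corresponding estimates at the \emph{signature} level (Lemma~\ref{lemma:bound_sig_appendix}, in particular the bound $\norm{S(\gamma^g)-S(\gammab^g)}_{\overline{T(\cH_g)}}\le \norm{\gamma-\gammab}_0\,\overline{DS_0}(\abs{\gamma}_1,\abs{\gammab}_1)$) and then transfers them to the kernel by Cauchy--Schwarz. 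Both routes work; the paper's has the advantage that the same signature estimates are reused elsewhere (Proposition~\ref{proposition:C2}, Lemma~\ref{lemma:compact_H}), while yours is more self-contained for this proposition alone. For part~(2) the paper differentiates the expanded form \eqref{eqn:pde_derivative_partial} term by term and then regroups, whereas you differentiate the compact $G$-form \eqref{eqn:pde_derivative_G} directly; these are equivalent and your version is slightly tidier.
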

From the PDEs of~\Cref{prop:der_kappag} arise uniform estimates in~$s,t\in\bT$ for the signature kernel and its derivatives with respect to the $1$-variation norms of~$\gamma,\tau,\eta,\etab$. The precise statements and bounds are gathered in Lemma~\ref{lemma:bound_kappag_appendix} in the Appendix. 
In Section~\ref{sec:theory} we will restrict the state space to a compact subspace of~$\BV_0$ with bounded $1$-variation norm which yield uniform bounds.
\begin{remark}
Expliciting the partial derivatives in \eqref{eqn:pde_derivative_G} yields the equivalent formulation:
\begin{align}\label{eqn:pde_derivative_partial}
    \partial_\gamma^\eta \kappa^g_{\text{sig}}(\gamma,\tau)_{s',t'} &= \int_s^{s'} \int_t^{t'}  \partial_\gamma^\eta\kappa^g_{\text{sig}}(\gamma,\tau)_{u,v} \sum_{i,k=1}^e \partial_{x^i y^k}^2 g(\gamma_u,\tau_v) \D \gamma^i_u \D \tau^k_v  \\
    &\quad + \int_s^{s'} \int_t^{t'} \kappa^g_{\text{sig}}(\gamma,\tau)_{u,v} \sum_{i,j,k=1}^e \partial_{x^{ij} y^k}^3
    g(\gamma_u,\tau_v) \eta_u^j \D \gamma_u^i\D\tau^k_v  \nonumber\\
    &\quad + \int_s^{s'} \int_t^{t'} \kappa^g_{\text{sig}}(\gamma,\tau)_{u,v} \sum_{i,k=1}^e \partial_{x^i y^k}^2 g(\gamma_u,\tau_v)\D \eta_u^i  \D \tau^k_v.\nonumber
\end{align}    
Equation \eqref{eqn:pde_derivative_G} is more conducive for numerical computations while Equation \eqref{eqn:pde_derivative_partial} will be used for the theoretical analysis. One can perform the same expansion for the second derivative, as can be seen in the proof (Equation \eqref{eqn:pde_derivative_partial2}). 
\end{remark}
\begin{remark}\label{rem:kappa_C2}
    By iterating the same arguments one could prove that the pathwise derivative of the kernel at any order satisfies a linear PDE of the same kind. We do not pursue this here, but note in particular that $\kappa(\cdot,\tau)$ is infinitely many times differentiable for all $\tau\in \BV(\bT,\bR^e)$.
\end{remark}

Similarly to the kernel case, given two paths $\gamma,\eta \in \BV_t$, we can define the directional derivative of the signature on lifted paths
\begin{equation}\label{sig_dir_der}
    \partial_\gamma^\eta S\circ G(\gamma)_{t,t'} = \partial_\gamma^\eta S(\gamma^g)_{t,t'} := \lim_{\ep \to 0} \frac{S((\gamma + \ep \eta)^g)_{t,t'} - S(\gamma^g)_{t,t'}}{\ep},
\end{equation}
as well as the second derivative with respect to another direction~$\etab\in\BV(\bT,V)$, denoted~$\partial^{\eta\etab}_\gamma S\circ G$. The following result is a corollary of Lemma~\ref{lemma:bound_sig_appendix}, which also provides estimates of the signature and its derivatives with respect to the $1$-variation norms of~$\gamma,\eta,\etab$.
\begin{proposition}\label{prop:cty_signature}
    For all~$\gamma,\eta,\etab\in\BV(\bT,\bR^e)$, $e\in\bN$, the signature and its first two directional derivatives $\partial^\eta_\gamma S\circ G$ and~$\partial^{\eta\etab}_\gamma S\circ G$  exist and are continuous with respect to the supremum norm.  Moreover, the derivatives satisfy the CDEs~\eqref{eq:PDE_dersig} and~\eqref{eq:PDE_dertwosig}.
\end{proposition}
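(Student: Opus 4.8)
The plan is to derive everything from the CDE characterisation of the signature in Lemma~\ref{lemma:sig_cde} and then differentiate it formally in the direction~$\eta$ (and then~$\etab$), obtaining linear controlled differential equations for the derivative processes, whose well-posedness follows again from the Young/Picard--Lindel\"of theory. First I would recall that, by Lemma~\ref{lemma:sig_cde} applied to the lifted path~$\gamma^g\in\BV([t,t'],\cH_g)$ (which is BV by the lifting lemma), the signature~$S(\gamma^g)_{t,\cdot}$ is the unique $\overline{T(\cH_g)}$-valued solution of $\D Y_s = Y_s\cdot\D\gamma^g_s$ started at~$\mathbf 1$, where $\D\gamma^g_s = \sum_{i=1}^e \partial_{x^i}g(\gamma_s,\cdot)\,\D\gamma^i_s$ by the chain rule~\eqref{eq:chain_rule_dg}. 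Differentiating this identity in~$\ep$ at~$\ep=0$ along $\gamma+\ep\eta$ and interchanging $\partial_\ep$ with the integral (justified by dominated convergence and the uniform-in-$\ep$ bounds of Assumption~\ref{assu:statickernel}), one gets that $Z := \partial^\eta_\gamma S\circ G(\gamma)$ formally solves an \emph{inhomogeneous} linear CDE of the form $\D Z_s = Z_s\cdot\D\gamma^g_s + S(\gamma^g)_{t,s}\cdot\D\big(\partial^\eta_\gamma G(\gamma)\big)_s$, with $\partial^\eta_\gamma G(\gamma)_s = \sum_{i,j}\partial^2_{x^ix^j}g(\gamma_s,\cdot)\eta^j_s\,$ appearing through its increments; this is the CDE referenced as~\eqref{eq:PDE_dersig}. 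Similarly, differentiating once more along~$\etab$ gives the second-order CDE~\eqref{eq:PDE_dertwosig}, which is again linear in the unknown with a source term built from $S(\gamma^g)$, $\partial^\eta_\gamma S\circ G$ and $\partial^\etab_\gamma S\circ G$ together with first, second and third derivatives of~$g$.

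The second step is to make the above rigorous rather than merely formal. Here I would argue directly: define~$Z$ as the unique solution of the linear CDE~\eqref{eq:PDE_dersig} (existence/uniqueness following from \cite[Thm.~1.28]{lyons2007differential} exactly as in the proof of Lemma~\ref{lemma:sig_cde}, since the driving vector field is linear and the source term has finite $1$-variation by Assumption~\ref{assu:statickernel}), and then show $\ep^{-1}\big(S((\gamma+\ep\eta)^g)-S(\gamma^g)\big)\to Z$ in~$\overline{T(\cH_g)}$ as~$\ep\to0$. This is a standard Gronwall-type estimate: subtract the two CDEs, use the Lipschitz/linear structure of the vector fields, and control the discrepancy using the uniform bounds of Lemma~\ref{lemma:bound_sig_appendix} together with the smoothness of~$g$. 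The same template, applied to the first-order derivative CDE, yields existence of~$\partial^{\eta\etab}_\gamma S\circ G$ and the CDE~\eqref{eq:PDE_dertwosig}.

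The third step is the continuity claim with respect to the~$\cC_0$-norm, which is the delicate point, because BV paths that are close in supremum norm can have wildly different increments, so one cannot naively pass to the limit inside the controlled integrals. The idea is that the estimates of Lemma~\ref{lemma:bound_sig_appendix}, being uniform over~$\BV(\bT,\bR^e)$-balls in the relevant $1$-variation norms, combined with the fact that the maps in question (signature and its derivatives) are \emph{already} known to be continuous in $1$-variation, upgrade to $\cC_0$-continuity on such balls: a standard interpolation/density argument shows that on a fixed bounded set in $1$-variation the $\cC_0$ and $p$-variation topologies induce the same notion of convergence for the continuous functionals at hand, and the $\cC_0$-topology is the weakest of these. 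Concretely, I would take a sequence $\gamma^{(k)}\to\gamma$ in~$\cC_0$ with $\sup_k\|\gamma^{(k)}\|_{1\text{-}var}<\infty$, extract via the compact embedding $C^{1\text{-}var,\alpha}\hookrightarrow C^{p\text{-}var}$ (cited in Section~\ref{subsec:path_spaces}) a $p$-variation-convergent subsequence whose limit must be~$\gamma$, and use continuity in $p$-variation plus a subsequence argument to conclude. The main obstacle I anticipate is precisely this last passage — transferring continuity from the strong ($p$-variation) topology to the weak ($\cC_0$) one — and getting the source terms of the CDEs, which involve increments of $\eta$ and $\etab$, to behave well under it; everything else is a routine application of the Young CDE machinery already invoked for Lemma~\ref{lemma:sig_cde}.
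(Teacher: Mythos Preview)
Your Steps~1 and~2 are essentially the paper's argument: the CDE for the signature is differentiated in~$\ep$, dominated convergence (justified by the uniform-in-$\ep$ Lipschitz-type bound coming from the signature estimate) pushes the limit inside, and one reads off the linear CDEs~\eqref{eq:PDE_dersig} and~\eqref{eq:PDE_dertwosig}. Gr\"onwall then yields the a~priori bounds. This part is fine.

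Step~3 is where you diverge from the paper, and your proposed route has a genuine gap. The compact embedding $C^{1\text{-}var,\alpha}\hookrightarrow C^{p\text{-}var}$ you invoke from Section~\ref{subsec:path_spaces} requires a uniform $\alpha$-H\"older bound in addition to the $1$-variation bound; assuming only $\sup_k\|\gamma^{(k)}\|_{1\text{-}var}<\infty$ does not place the sequence in a compact set of $C^{p\text{-}var}$, so you cannot extract a $p$-variation-convergent subsequence in general. Even if you patched this, you would still owe a separate proof of $p$-variation continuity of the derivative functionals before the subsequence argument can bite.

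The paper avoids all of this by proving \emph{direct} $\cC_0$-Lipschitz estimates of the form
\[
\sup_{t'}\big\|S(\gamma^g)_{t'}-S(\bar\gamma^g)_{t'}\big\|_{\overline{T(\cH_g)}}
\le \|\gamma-\bar\gamma\|_{0}\cdot\overline{DS_0}\big(|\gamma|_{1},|\bar\gamma|_{1}\big),
\]
and analogues for $\partial^\eta_\gamma S\circ G$ and $\partial^{\eta\etab}_\gamma S\circ G$ (these are exactly the $\overline{DS_0},\overline{DS_1},\overline{DS_2}$ bounds in Lemma~\ref{lemma:bound_sig_appendix}). The key device, borrowed from \cite[Theorem~3.15]{friz2010multidimensional}, is that the dangerous term $\int S(\bar\gamma^g)\,\D(\gamma^g-\bar\gamma^g)$ is controlled not through the $1$-variation of $\gamma^g-\bar\gamma^g$ but through its \emph{sup norm}, at the price of the $1$-variation of $S(\bar\gamma^g)$ appearing in the constant; this is an integration-by-parts manoeuvre. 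Once that bound is in hand, Gr\"onwall closes the estimate and $\cC_0$-continuity (on sets of bounded $1$-variation, which is all that is used downstream) is immediate --- no compactness, no subsequences, no prior $p$-variation continuity needed. I would replace your Step~3 by this direct argument.
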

This result is analogous to \cite[Theorem 4.4]{friz2010multidimensional} albeit in $\cH_g$-valued paths.


\subsection{Computations of kernel derivatives}

In practice, signature kernels and their  derivatives can be computed with a single call to a PDE solver. To see this, for any $\gamma, \tau, \eta, \etab \in \BV_0$ and any $s,t \in \bT$ set
\begin{align*}
    \mathbb A_{s,t}(\gamma,\tau) = \left\langle\partial_t\gamma^g_s, \partial_t \tau^g_t \right\rangle_{\cH_g},  \quad
    \mathbb A^\eta_{s,t}(\gamma,\tau) = \left\langle\partial_t\partial^\eta_\gamma G(\gamma)_s, \partial_t \tau^g_t \right\rangle_{\cH_g}, \quad
    \mathbb A^{\eta \etab}_{s,t}(\gamma,\tau) = \left\langle\partial_t\partial^{\eta\etab}_\gamma G(\gamma)_s, \partial_t \tau^g_t \right\rangle_{\cH_g}
\end{align*}
Then, the augmented variable
$$\mathbf K_{s,t}^{\eta\etab}(\gamma,\tau) = \begin{pmatrix}
    \kappa^g_{\text{sig}}(\gamma,\tau)_{s,t} \\
    \partial^\eta_\gamma\kappa^g_{\text{sig}}(\gamma,\tau)_{s,t} \\
    \partial^\etab_\gamma\kappa^g_{\text{sig}}(\gamma,\tau)_{s,t}\\
    \partial^{\eta\etab}_\gamma\kappa^g_{\text{sig}}(\gamma,\tau)_{s,t} 
\end{pmatrix} \in \mathbb R^4
$$
satisfies the following linear system of hyperbolic PDEs
\begin{align}\label{eqn:system}
\frac{\partial^2}{\partial_s \partial_t}
 \mathbf K_{s,t}^{\eta\etab}(\gamma,\tau)
&= 
\begin{pmatrix}
 \mathbb A_{s,t}(\gamma,\tau)  & 0 & 0 & 0
 \\
 \mathbb A_{s,t}^\eta(\gamma,\tau) & \mathbb A_{s,t}(\gamma,\tau) & 0 & 0\\
 \mathbb A_{s,t}^\etab(\gamma,\tau) & 0 & \mathbb A_{s,t}(\gamma,\tau) & 0 \\
 \mathbb A_{s,t}^{\eta \etab}(\gamma,\tau) & \mathbb A_{s,t}^\etab(\gamma,\tau) & \mathbb A_{s,t}^\eta(\gamma,\tau) & \mathbb A_{s,t}(\gamma,\tau)
\end{pmatrix}
 \mathbf K_{s,t}^{\eta\etab}(\gamma,\tau),
\end{align}
with boundary conditions
\begin{equation*}
    \mathbf K_{0,t}^{\eta\etab}(\gamma,\tau) = \mathbf K_{s,0}^{\eta\etab}(\gamma,\tau) = (1,0,0,0)^\top.
\end{equation*}
\begin{remark}
    In the case where $g=id$ the system of PDEs \eqref{eqn:system} simplifies as
    $\mathbb A^\eta_{s,t}(\gamma,\tau) = \mathbb A_{s,t}(\eta,\tau)$ and $\mathbb A^{\eta\etab}_{s,t} = 0$. For more general static kernels $g$, the derivative $\mathbb A$ might not be available in close form. In that case we can approximate them by finite difference as follows
    \begin{align*}\mathbb A_{s,t}(\gamma,\tau)
    &  = \left\langle\partial_t\gamma^g_s, \partial_t \tau^g_t \right\rangle_{\cH_g} \\
    & \approx \left\langle \frac{1}{\Delta s} (\gamma^g_s - \gamma^g_{s-\Delta s}), \frac{1}{\Delta t} (\tau^g_t - \tau^g_{t- \Delta t})\right\rangle_{\cH_g}\\
    & = \frac{1}{\Delta s\Delta t}  \left(\left\langle \gamma^g_s, \tau^g_t \right\rangle_{\cH_g}  - \left\langle \gamma^g_s, \tau^g_{t-\Delta t} \right\rangle_{\cH_g}  \left\langle \gamma^g_{s-\Delta s}, \tau^g_t \right\rangle_{\cH_g} + \left\langle \gamma^g_{s-\Delta s}, \tau^g_{t-\Delta t} \right\rangle_{\cH_g} \right)\\
    &:= \frac{1}{\Delta s\Delta t}  \Big(\left\langle g(\gamma_s, \cdot), g(\tau_t,\cdot) \right\rangle_{\cH_g}  - \left\langle g(\gamma_s, \cdot), g(\tau_{t-\Delta t}, \cdot) \right\rangle_{\cH_g}  \\
    & \hspace{2cm}-\left\langle g(\gamma_{s-\Delta s}, \cdot), g(\tau_t, \cdot) \right\rangle_{\cH_g} + \left\langle g(\gamma_{s-\Delta s, \cdot)}, g(\tau_{t-\Delta t}, \cdot) \right\rangle_{\cH_g} \Big)\\
    &= \frac{1}{\Delta s\Delta t} \Big( g(\gamma_s, \tau_t) - g(\gamma_s, \tau_{t-\Delta t}) - g(\gamma_{s-\Delta s}, \tau_t) +  g(\gamma_{s-\Delta s}, \tau_{t-\Delta t}) \Big),
    \end{align*}
     where the last equality follows by the reproducing property of the kernel $g$. Similarly for $\mathbb A^\eta$ and $\mathbb A^{\eta,\etab}$
\end{remark}

\subsection{Universal product kernels} 

The solutions to our PPDEs are defined on the product space~$\Omega \subset \bT\times \bR^d \times \BV_0$, see~\eqref{eq:def_Omega}. Therefore, we will need to consider product kernels indexed on such product spaces. The following result provides a simple way of constructing kernels that are universal on a product space as products of kernel on the individual factor spaces.

\begin{lemma}\cite[Lemma 5.2]{blanchard2011generalizing}\label{lemma:prod_kernel}
    Let $\kappa_1,\kappa_2$ be two cc-universal kernels indexed on $\Omega_1, \Omega_2$ respectively. Then the product kernel defined for any $\omega_1,\omega_1' \in \Omega_1$ and $\omega_2,\omega_2' \in \Omega_2$ as
    $$\kappa((\omega_1,\omega_2),(\omega_1',\omega_2')) := \kappa_1(\omega_1,\omega_1')\kappa_2(\omega_2,\omega_2')$$
    is cc-universal on $\Omega_1 \times \Omega_2$.
\end{lemma}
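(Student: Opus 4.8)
The plan is to reduce the claim to the Stone--Weierstrass theorem applied on a compact subset of $\Omega_1 \times \Omega_2$. First I would fix an arbitrary compact set $\mathcal{K} \subset \Omega_1 \times \Omega_2$; by standard topology its projections $\mathcal{K}_1 \subset \Omega_1$ and $\mathcal{K}_2 \subset \Omega_2$ are compact, and $\mathcal{K} \subset \mathcal{K}_1 \times \mathcal{K}_2$. Since cc-universality of $\kappa_i$ gives density of $\cH_{\kappa_i}$ in $\cC(\mathcal{K}_i)$, it suffices to show that $\cC(\mathcal{K}_1 \times \mathcal{K}_2)$ is approximated (uniformly) by the RKHS $\cH$ of the product kernel restricted to $\mathcal{K}$, and then restrict to $\mathcal{K}$.

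The key structural fact I would invoke is that the RKHS of a product kernel $\kappa_1 \kappa_2$ is the (Hilbert space) tensor product $\cH_{\kappa_1} \otimes \cH_{\kappa_2}$, realised concretely as the completion of the linear span of functions $(\omega_1,\omega_2) \mapsto f_1(\omega_1) f_2(\omega_2)$ with $f_i \in \cH_{\kappa_i}$; this is the classical Aronszajn result on products of reproducing kernels. Consequently the span $\mathcal{A}$ of such separable products lies in $\cH$ and, when restricted to $\mathcal{K}$, forms a subalgebra of $\cC(\mathcal{K})$: it is closed under addition and scalar multiplication by construction, and closed under pointwise multiplication because $(f_1 g_1)(f_2 g_2) = (f_1 f_2)(g_1 g_2)$ with $f_i g_i$ again an element of the relevant RKHS (RKHS of a kernel need not be an algebra in general, but products $f_1 f_2$ with $f_1,f_2\in\cH_{\kappa_1}$ lie in the RKHS of $\kappa_1^2$; to stay inside one fixed space one should instead argue at the level of the algebra generated rather than $\cH_{\kappa_1}$ itself — see the obstacle below).

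The verification of the Stone--Weierstrass hypotheses then proceeds as follows. The algebra contains the constants since each $\cH_{\kappa_i}$, being cc-universal, approximates constant functions on $\mathcal{K}_i$ uniformly, so constants lie in the uniform closure; alternatively one notes $\mathbf{1}$ itself may be adjoined. For point separation, take distinct $(\omega_1,\omega_2) \ne (\bar\omega_1,\bar\omega_2)$ in $\mathcal{K}$: then $\omega_1 \ne \bar\omega_1$ or $\omega_2 \ne \bar\omega_2$; in the first case pick $f_1 \in \cH_{\kappa_1}$ with $f_1(\omega_1) \ne f_1(\bar\omega_1)$ (possible since $\cH_{\kappa_1}$ is dense in $\cC(\mathcal{K}_1)$ which separates points), and take $f_2 \equiv 1$ approximated in $\cH_{\kappa_2}$, so $f_1 \otimes f_2$ separates the two points; symmetrically in the second case. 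Stone--Weierstrass then gives that the uniform closure of this separable-product algebra is all of $\cC(\mathcal{K})$, and since that algebra lies in the uniform closure of $\cH|_{\mathcal{K}}$, we conclude $\cH$ is dense in $\cC(\mathcal{K})$, i.e. $\kappa$ is cc-universal.

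\textbf{Main obstacle.} The delicate point is the algebra structure: a generic RKHS is not closed under pointwise products, so one cannot simply say $\cH_{\kappa_1}$ is an algebra. The clean fix is to work with the \emph{uniform closures} throughout — let $\mathcal{A}_i$ denote the uniform closure of $\cH_{\kappa_i}|_{\mathcal{K}_i}$, which is $\cC(\mathcal{K}_i)$ by cc-universality and is trivially an algebra; then show the uniform closure of $\cH|_{\mathcal{K}}$ contains all products $f_1 \otimes f_2$ with $f_i \in \cC(\mathcal{K}_i)$ (approximating each $f_i$ by RKHS elements and passing to the limit in the product, which is legitimate on the compact $\mathcal{K}$ by uniform boundedness), hence contains the algebra they generate, and apply Stone--Weierstrass to that algebra. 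This circumvents any need for $\cH_{\kappa_i}$ to be multiplicatively closed. Since this is precisely the content of \cite[Lemma 5.2]{blanchard2011generalizing}, I would either cite it directly or reproduce this short argument; no further difficulty arises.
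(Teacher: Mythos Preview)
The paper does not prove this lemma; it merely states it with a citation to \cite[Lemma 5.2]{blanchard2011generalizing} and uses it as a black box. Your Stone--Weierstrass argument is a correct and standard way to establish the result, and your identification of the obstacle (RKHSs are not algebras) together with the fix (pass to uniform closures on the compact projections before invoking Stone--Weierstrass) is exactly right. Since the paper's ``proof'' is a bare citation, there is nothing to compare at the level of technique; your proposal simply supplies what the paper omits.
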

Since $\bT \times \mathbb R^d \subset \mathbb R^{d+1} $, we consider product kernels~$\widetilde \kappa^{k,g}$ defined for any $(s,x,\gamma), (t,y,\tau) \in \bT\times \bR^d \times \BV_0$ as
\begin{equation*}
     \widetilde\kappa^{k,g}((s,x,\gamma), (t,y,\tau)) := k((s,x),(t,y))\kappa^g_{\text{sig}}(\gamma,\tau),
\end{equation*}
where $k : \mathbb R^{d+1} \times \mathbb R^{d+1} \to \mathbb R$ is some cc-universal kernel on~$\mathbb R^{d + 1}$ such as the RBF kernel in~\eqref{eqn:rbf_kernel}, and~$\kappa^g_{\text{sig}}$ is a~$g$-lifted signature kernel (Definition \ref{def:kappag}).

However, signature kernels are only universal when restricted to classes of paths where the signature is an injective map. One such space is~$\BV_{0}^0 \subset \BV_0$, the subset of continuous bounded variation paths that are time-augmented and started at the origin $0$, endowed with the $1$-variation topology.  Denote by~$\mathcal{H}_{\kappa^g_{\text{sig}}}$  the RKHS associated to the signature kernel $\kappa^g_{\text{sig}}$. The next result states that, when restricted to a compact subset $B \subset \BV_{0}^0$, elements of $\mathcal{H}_\kappa|_B$ are dense in $\cC(B)$ with the topology of uniform convergence. In other words, $\kappa^g_{\text{sig}}$ is cc-universal on $\BV_{0}^0$.

\begin{lemma}\label{lemma:univ}
    Let $B \subset \BV_{0}^0$ be compact in $1$-variation. Then, $\mathcal{H}_\kappa|_{B}$ is a dense subset of $\cC(B)$ with the topology of uniform convergence. 
\end{lemma}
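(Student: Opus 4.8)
The plan is to invoke the Stone--Weierstrass theorem on the compact metric space $(B,\norm{\cdot}_{1-var;\bT})$, producing a unital, point-separating subalgebra of $\cC(B)$ that sits inside $\mathcal{H}_\kappa|_B$ (throughout, $\kappa=\kappa^g_{\text{sig}}$). The starting observation is that $\kappa$ is an inner-product kernel with feature map $\Phi:\BV_{0}^0\to\overline{T(\cH_g)}$, $\Phi(\gamma)=S(\gamma^g)$, so by the feature-map description of reproducing kernel Hilbert spaces (the standard refinement of the Moore--Aronszajn theorem), the linear map $\ell\mapsto\langle\ell,\Phi(\cdot)\rangle$ carries $\overline{T(\cH_g)}$ \emph{onto} $\mathcal{H}_\kappa$. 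In particular, the space of \emph{signature polynomials}
$$
\mathcal{A}:=\big\{\,\gamma\mapsto\langle\ell,S(\gamma^g)\rangle\;:\;\ell\in T(\cH_g)\,\big\}
$$
is contained in $\mathcal{H}_\kappa$, and since $S(\cdot^g)$ is continuous by Proposition~\ref{prop:cty_signature} (hence $\kappa$ is continuous on $B\times B$), we get $\mathcal{A}|_B\subseteq\mathcal{H}_\kappa|_B\subseteq\cC(B)$.

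Next I would check the three Stone--Weierstrass hypotheses for $\mathcal{A}|_B$. \emph{Algebra:} closure under sums and scalars is immediate, and closure under products is the shuffle identity for iterated integrals of bounded-variation paths valued in a Banach space, $\langle\ell_1,S(\gamma^g)\rangle\,\langle\ell_2,S(\gamma^g)\rangle=\langle\ell_1\shuffle\ell_2,S(\gamma^g)\rangle$, with $\ell_1\shuffle\ell_2\in T(\cH_g)$ again a tensor polynomial. \emph{Constants:} taking $\ell$ supported on tensor level $0$ recovers the constant functions, since the zeroth level of a signature equals $1$. \emph{Separation of points:} this is the crux and reduces to injectivity of $\Phi$ on $\BV_{0}^0$, because if $\Phi(\gamma)\neq\Phi(\tau)$ in the Hilbert space $\overline{T(\cH_g)}$ then some vector separates them, and density of $T(\cH_g)$ allows one to take that vector in $T(\cH_g)$. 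To prove $\Phi$ injective I would proceed in two stages: (i) the lift $\gamma\mapsto\gamma^g$, $\gamma^g_r=g(\gamma_r,\cdot)$, is injective on $\BV_{0}^0$ because the static kernel $g$ (e.g.\ the RBF kernel used in the experiments) has an injective canonical feature map, so $g(\gamma_r,\cdot)=g(\tau_r,\cdot)$ for all $r$ forces $\gamma\equiv\tau$; (ii) $\gamma^g\mapsto S(\gamma^g)$ is injective, since $r\mapsto(r,\check\gamma_r)$ is injective (distinct times give distinct points), so after the injective lift the curve $\gamma^g$ is injective in $\cH_g$, hence tree-reduced, and the uniqueness theorem for signatures of Hilbert-space-valued paths (Hambly--Lyons, extended by Boedihardjo--Geng--Lyons--Yang) then says $S(\gamma^g)$ determines $\gamma^g$ up to reparametrisation, while the monotone time coordinate pins down the parametrisation and recovers $\gamma$ itself.

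Granting these facts, Stone--Weierstrass yields that $\mathcal{A}|_B$ is uniformly dense in $\cC(B)$; since $\mathcal{A}|_B\subseteq\mathcal{H}_\kappa|_B\subseteq\cC(B)$, the RKHS $\mathcal{H}_\kappa|_B$ is uniformly dense in $\cC(B)$ as well, i.e.\ $\kappa^g_{\text{sig}}$ is cc-universal on $\BV_{0}^0$. (Alternatively one could quote existing universality results for signature kernels, but the Stone--Weierstrass route makes the role of the time augmentation transparent.) I expect the main obstacle to be stage (ii) of the separation argument: justifying rigorously, in infinite dimensions, the chain ``injective $\cH_g$-valued curve $\Rightarrow$ tree-reduced $\Rightarrow$ signature determines the path up to reparametrisation'', and then exploiting the time augmentation to remove the residual reparametrisation ambiguity. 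The remaining ingredients --- the feature-map identity for the RKHS, the shuffle identity for Banach-valued paths, and continuity of the signature map --- are routine or already established in the excerpt.
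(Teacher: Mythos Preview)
Your proposal is correct and follows the same Stone--Weierstrass route as the paper, which simply states that the argument is analogous to \cite[Proposition~3.3]{cass2024weighted} with point-separation coming from injectivity of the signature on $\BV_0^0$. You supply considerably more of the skeleton than the paper does (the feature-map description of $\mathcal{H}_\kappa$, the shuffle identity for the algebra property, and the two-stage injectivity argument for the lifted map $\gamma\mapsto S(\gamma^g)$), and you correctly flag the infinite-dimensional uniqueness-of-signature step as the only place requiring genuine care.
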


The proof of this statement is analogous to \cite[Proposition 3.3]{cass2024weighted} with the difference that the point-separation assumption in the Stone-Weirestrass theorem is justified by noting that $S(\gamma) = S(\tau) \iff \gamma = \tau$ when $\gamma, \tau \in \BV_0^0$ .

It is important to note that the above result might no longer hold if one considers compact subsets of~$\BV_0$ that are not time-augmented and that do not share a common origin. This is due the fact that the signature map is no longer injective on~$\BV_0$, thus the point-separation assumption used in the proof of \Cref{lemma:univ} invoking the Stone-Weirestrass Theorem no longer holds. However, in this paper we are interested in paths that might not have a common starting point. To remedy this issue, we modify the signature kernel $\kappa^g_{\text{sig}}$ and, for any~$s,t\in\bT$, define the kernel $\kappa^{g,\ell}_{\text{sig}} : \BV_0 \times \BV_0 \to \mathbb R$ as
\begin{equation}\label{eq:kappagell}
    \kappa^{g,\ell}_{\text{sig}}(\gamma,\tau) = \ell(\gamma_0, \tau_0)\kappa^g_{\text{sig}}(\gamma-\gamma_0, \tau - \tau_0)
\end{equation}
where $\ell : \mathbb R^e \times \mathbb R^e \to \mathbb R$ is yet another kernel that keeps track of the starting points of the two input paths. 
The signature kernel~$\kappa^g_{\text{sig}}$ is cc-universal on~$\BV_0$ by~\Cref{lemma:univ} hence, if~$\ell$ is cc-universal, \Cref{lemma:prod_kernel} yields that~$\kappa^{g,\ell}_{\text{sig}}$ is cc-universal on $\BV_0$. 
We thus define the product kernel~$\kappa$  for any $(s,x,\gamma), (t,y,\tau) \in \bT\times \bR^d \times \BV_0$ as
\begin{align}\label{eqn:product_kernel}
     \kappa((s,x,\gamma), (t,y,\tau)) 
     = k((s,x),(t,y))\kappa^{g,\ell}_{\text{sig}}(\gamma,\tau),
\end{align}
where $k : \mathbb R^{d+1} \times \mathbb R^{d+1} \to \mathbb R$ is a cc-universal kernel on $\mathbb R^{d + 1}$ and~$\kappa^{g,\ell}_{\text{sig}}$ is from~\eqref{eq:kappagell}. 
Note that for a path~$\check\gamma$ which is constant on~$[0,s]$, as we consider in~$\Omega$ \eqref{eq:def_Omega}, we have~$\check\gamma-\check\gamma_0=\check\gamma-\check\gamma_s$ such that~$(\check\gamma-\gamma_0)\lvert_{[s,T]}$ is a path starting at zero. 
Moreover, because the signature is invariant to translation, 
there exists a measurable map~$\tilde{\kappa}^g_{s}:\BV_s\to\bR$ such that for any $\gamma,\tau\in\BV_0^0 $ with $\check\gamma$ constant on $[0,s]$ one has
$$
\kappa^g_{\text{sig}}(\gamma, \tau )
=
\langle S(\gamma^g)_{0,T}, S(\tau^g)_{0,T} \rangle 
= \langle S(\gamma^g)_{0,s}\cdot S(\gamma^g)_{s,T}, S(\tau^g)_{0,T} \rangle 
=
\tilde{\kappa}^g_{s}(\gamma\lvert_{[s,T]}, \tau).
$$
This entails~$(s,x,\gamma)\mapsto\kappa((s,x,\gamma),(t,y,\tau))\in \cC(\Omega)$, and it is clear what the Gateaux derivative~\eqref{eq:Pathwise_restricted_def} means in this context.
For future reference and technical reasons, we make the following assumptions on these kernels, satisfied for instance by RBF kernels. Denote by $\mathcal{H}_\ell,\mathcal{H}_k$ the RKHS associated to the kernels~$\ell,k$ respectively, and denote by $\varphi:\bR^e \to \mathcal{H}_\ell$ and~$\chi:\bR^{d+1}\to \mathcal{H}_k$ their feature maps, defined as $\varphi(x) := \ell(x, \cdot)$ and $\chi(x) := k(x, \cdot)$.
\begin{assumption}\label{assu:featuremaps}
The feature maps~$\varphi$ and~$\chi$ are bounded, Lipschitz continuous, twice differentiable and with bounded, Lipschitz continuous derivatives, with bounds and Lipschitz constants~$C_\varphi$ and~$C_\chi$ respectively.
\end{assumption}
For the convergence of the numerical scheme to take place, we have to work on a compact subspace of~$(\BV_0,\norm{\cdot}_{p-var})$, for~$p>1$, on which the~$1$-variation norm is bounded. This results in all the estimates of Lemmas~\ref{lemma:bound_sig_appendix} and~\ref{lemma:bound_kappag_appendix} to be bounded and enables the use of Arzel\`a-Ascoli theorem. 
As we recalled in the introduction, for all~$R>0$ and~$\alpha>0$, the set
\begin{equation}
    B(R):=\{ \gamma\in\BV_0: \norm{\gamma}_{1-var,\alpha}\le R\}
\end{equation}
is a compact subset of~$(\BV_0,\norm{\cdot}_{p-var})$ for all~$p>1$ and hence for any~$t\in\bT$ the sets
\begin{equation}\label{eq:BtRcompact}
    B_t(R):=\{ \gamma\in\BV_0: \norm{\gamma}_{1-var,\alpha}\le R,\, \gamma_s=\gamma_0\text{   for all  } s\le t\}
\end{equation}
are compact subsets of~$\{ \gamma\in\BV_0: \gamma_s=\gamma_0\text{   for all  } s\le t\}$ with respect to~$\norm{\cdot}_{p-var}$. Compact sets of~$\Omega$ with respect to~$\bm{d}$ are thus typically of the form
\begin{equation}
\cX =\Big\{(t,x,\gamma)\in \bT\times  \bigcap_{i=1}^d [a_i,b_i] \times\BV_0: \gamma\in B_t(R) \Big\}
\end{equation}
for~$a_i,b_i\in\bR,\,i=1,\cdots,d$ and~$R>0$.

We denote by~$\cH$ the RKHS of the kernel $\kappa:\cX\times\cX\to\bR$ restricted to act on~$\cX$. We define the feature map $\Phi:\cX\to \cH$ as~$
\Phi(\omega)=\kappa(\omega,\cdot)$. 
Since the two kernels $h$ and $\kappa^{g,\ell}_{\text{sig}}$ are continuously twice differentiable, as we have seen in the previous section, so is their product. This implies that for each $\omega\in\cX$, the map $\kappa(\omega,\cdot)$ belongs to $\cC^{\infty,\infty,2}(\cX)$.

Moreover, as a consequence of Lemmas~\ref{lemma:prod_kernel} and~\ref{lemma:univ}, the product kernel~$\kappa$ is cc-univeral on~$\cX\times\cX$. In other words, $\cH$ is dense in~$\cC(\cX)$ with respect to the topology of uniform convergence.



\section{Main theoretical results}\label{sec:theory}
This section aims at answering the questions posed in~\Cref{enum:questions} and restricts our domain of study to pathwise derivatives of order two or lower. This framework is relevant for the backward Kolmogorov equations we presented in Section~\ref{subsec:examples} and can rely on the equations derived in~\Cref{prop:der_kappag}. More precisely we fix~$\bar{N}\in\bN$ and consider the operator~$\cL_t$ in~\eqref{eq:def_cL_BV} with
\begin{equation*}
    \cN:=\{n=(n_1,n_2,n_3)\in\bN_0^3: n_1\le \bar{N},n_2\le \bar{N},n_3\le 2\}.
\end{equation*}


\subsection{Optimal recovery and well-posedness}\label{sec:or-wp}

This section aims at answering the questions (1) to (3) of Section~\ref{subsec:method} regarding the well-posedness of the optimisation problem~\eqref{eq:OptiRecovery}. Our first result verifies that elements of the RKHS $\cH$ are indeed twice differentiable, and therefore belong to the domain of the PPDE operator~$\cL_t$. This answers Question (1) in~\ref{enum:questions}. The proof builds on the estimates derived in Lemma~\ref{lemma:bound_kappag_appendix}. The technical hurdle consists in interchanging the infinite series and the derivative operator. As mentioned earlier, this operation requires the restriction of~$\cH$ to a compact subset of~$\Omega$ called~$\cX$. 
Even though~$\cX$ is closed, the time derivative of~$h\in\cH$ is well-defined since it only perturbs to the right and we set $\partial_t u(T,x,\gamma)=0$. Moreover the spatial and pathwise derivatives are also well-defined over~$\bT\times\bigcap_{i=1}^d[a_i,b_i]\times B_t(R)$, where $B_t(R)$ is defined in~\eqref{eq:BtRcompact}, since the latter is included in an open set of~$\Omega$ and the product kernel~$\kappa$~\eqref{eqn:product_kernel} is defined over~$\bT\times \bR^d \times \BV_0$. We also recall that the pathwise derivatives $\wpar_\gamma^{\bm{\eta}}$ are defined in~\eqref{eq:Pathwise_restricted_def}.
\begin{proposition}\label{proposition:C2}
Let Assumptions~\ref{assu:statickernel} and~\ref{assu:featuremaps} hold for~$g$ and~$k,\ell$ respectively.
For all $h\in\cH$ such that $h=\sum_{i=1}^\infty \alpha_i \kappa(\omega^i,\cdot)$, with $(\alpha_i)_{i\in\bN}$ in~$\bR$ and~$(\omega^i)_{i\in\bN}$ in $\cX$, for all~$\partial\in\{\partial_t^{n_1} \partial_x^{n_2} \wpar_\gamma^{\bm{\eta}}:(n_1,n_2,n_3)\in\cN,\,\bm{\eta}\in\BV_0^{\otimes n_3}\}$ and $\omega=(t,x,\gamma)\in\cX$, we have
\begin{align*}
\partial h(\omega) = \sum_{i=1}^\infty \alpha_i \partial \kappa(\omega^i,\omega).
\end{align*}
In particular the domain of~$\cL_t$ is included in~$\cH$ for all~$t\in[0,T)$.
\end{proposition}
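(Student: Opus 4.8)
The plan is to reduce the claim to the classical theorem on term-by-term differentiation of uniformly convergent series, the only nonroutine ingredient being a \emph{uniform} (in the evaluation point) bound on the representers of the derivative functionals, which is precisely what Lemma~\ref{lemma:bound_kappag_appendix} delivers once $\cX$ is compact with uniformly bounded $1$-variation norm. Set $h_M:=\sum_{i=1}^M\alpha_i\kappa(\omega^i,\cdot)$. Since $h_M\to h$ in $\cH$ and $\sup_{\omega\in\cX}\kappa(\omega,\omega)<\infty$ (the feature maps are bounded under Assumptions~\ref{assu:statickernel} and~\ref{assu:featuremaps}, and the signature part is controlled on $B(R)$ by the estimates of Lemma~\ref{lemma:bound_kappag_appendix}), $h_M\to h$ uniformly on $\cX$. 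Each summand $\kappa(\omega^i,\cdot)$ belongs to $\cC^{\infty,\infty,2}(\cX)$ as noted after~\eqref{eqn:product_kernel}, so $\partial h_M=\sum_{i=1}^M\alpha_i\,\partial\kappa(\omega^i,\cdot)$ for every $\partial$ in the admissible class. If I show that $(\partial h_M)_M$ is uniformly Cauchy on $\cX$, then applying the differentiation-of-series theorem one derivative at a time — and, for the pathwise Gateaux derivatives $\wpar^{\eta}_\gamma$, along the one-parameter family $\ep\mapsto(t,x,\check\gamma+\ep\,\check\eta|_{[t,T]})$, iterating over the (at most two) directions and over the time and space derivatives — its uniform limit is $\partial h$ and equals $\sum_{i=1}^\infty\alpha_i\,\partial\kappa(\omega^i,\cdot)$.

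The uniform Cauchy estimate comes from the reproducing property for derivatives. For $\omega\in\cX$ write $\psi^\partial_\omega$ for the function on $\cX$ obtained by applying $\partial$ to the \emph{second} argument of $\kappa$ and freezing it at $\omega$, so that $\psi^\partial_\omega(\omega^i)=\partial\kappa(\omega^i,\cdot)(\omega)$. Two facts are needed: (a) $\psi^\partial_\omega\in\cH$, and (b) the reproducing identity $\langle\kappa(\omega^i,\cdot),\psi^\partial_\omega\rangle_\cH=\psi^\partial_\omega(\omega^i)$, which is immediate from (a) and~\eqref{eqn:reproducing_property}. For the finite-dimensional directions $\partial_t,\partial_x$, fact (a) is Zhou's theorem~\cite{zhou2008derivative} applied to the RBF factor $k$; for the pathwise direction it follows from the explicit PDE representations of $\partial^{\eta}_\gamma\kappa^g_{\text{sig}}$ and $\partial^{\eta\etab}_\gamma\kappa^g_{\text{sig}}$ in Proposition~\ref{prop:der_kappag}, which exhibit these derivatives as built linearly from lifted signatures and derivatives of $g$ that lie in the relevant RKHS by~\eqref{eq:reproducing_dg}, the product structure~\eqref{eqn:product_kernel} then assembling $\psi^\partial_\omega\in\cH$. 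Granting (a)--(b), Cauchy--Schwarz yields, for $M\le M'$,
\begin{align*}
    \Big|\sum_{i=M}^{M'}\alpha_i\,\partial\kappa(\omega^i,\cdot)(\omega)\Big|
    &=\Big|\Big\langle\textstyle\sum_{i=M}^{M'}\alpha_i\kappa(\omega^i,\cdot),\,\psi^\partial_\omega\Big\rangle_\cH\Big|\\
    &\le\Big\|\textstyle\sum_{i=M}^{M'}\alpha_i\kappa(\omega^i,\cdot)\Big\|_\cH\;\sup_{\omega'\in\cX}\big\|\psi^\partial_{\omega'}\big\|_\cH .
\end{align*}
The first factor tends to $0$ as $M\to\infty$, uniformly in $M'$, because the series defining $h$ converges in $\cH$; it remains to bound the supremum.

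Here the key identity is $\|\psi^\partial_\omega\|_\cH^2=\partial^{(1)}\partial^{(2)}\kappa(\omega,\omega)$, i.e. $\partial$ applied to each argument of $\kappa$ and evaluated on the diagonal. Expanding $\kappa=k\cdot\kappa^{g,\ell}_{\text{sig}}$ by the Leibniz rule, this is a finite sum of products of derivatives of $k$ (bounded and smooth, Assumption~\ref{assu:featuremaps}) with derivatives of $\kappa^{g,\ell}_{\text{sig}}$ in both variables and pathwise directions, of order at most $2+2=4$ since the pathwise order is capped at $2$. By Lemma~\ref{lemma:bound_kappag_appendix} all of these are bounded uniformly over $\cX=\bT\times\bigcap_{i=1}^d[a_i,b_i]\times B(R)$ — exactly because $B(R)$ has uniformly bounded $1$-variation norm and Assumption~\ref{assu:statickernel} controls four derivatives of $g$, which is why that lemma is stated up to order four. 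Hence $\sup_{\omega\in\cX}\|\psi^\partial_\omega\|_\cH<\infty$, the series $\sum_i\alpha_i\,\partial\kappa(\omega^i,\cdot)$ converges uniformly on $\cX$, and the reduction of the first paragraph applies. Since $\cL_t$ in~\eqref{eq:def_cL_BV} is a fixed finite $\bR$-linear combination of operators $\partial_t^{n_1}\partial_x^{n_2}\wpar^{\bm\eta}_\gamma$ with $n\in\cN$, we conclude that $\cL_t h$ is well defined for every $h\in\cH$ of the stated form; as $\cH$ is separable ($\cX$ compact metric, $\kappa$ continuous), every element of $\cH$ is such a series after a Gram--Schmidt re-indexing, so $\cH$ lies in the domain of $\cL_t$.

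I expect the main obstacle to be facts (a)--(b) for the \emph{pathwise} derivative, equivalently the statement that the difference quotients $\ep^{-1}\big(\kappa(\cdot,(t,x,\gamma+\ep\eta\one_{[t,T]}))-\kappa(\cdot,(t,x,\gamma))\big)$ converge to $\psi^{\wpar^\eta_\gamma}_\omega$ in $\cH$-norm and not merely pointwise: this $\cH$-convergence is itself established by a Cauchy-in-$\cH$ argument, Taylor-expanding the kernel and its mixed derivatives and controlling the remainder with Lemma~\ref{lemma:bound_kappag_appendix}, and it is here that the full four-derivative strength of Assumption~\ref{assu:statickernel} and the PDEs of Proposition~\ref{prop:der_kappag} are genuinely used. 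Everything else — the elementary term-by-term differentiation theorem, the Leibniz expansion, and the reduction of $\cL_t$ to its summands — is routine.
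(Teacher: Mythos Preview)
Your proposal is correct and follows essentially the same strategy as the paper: truncate $h$ to $h_M$, show the derivative of the tail is uniformly small via Cauchy--Schwarz against the ``derivative representer'', and invoke the uniform signature/kernel bounds on the compact $\cX$ to conclude term-by-term differentiation.

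The only notable difference is in packaging. You frame the argument through the representer $\psi^\partial_\omega\in\cH$ and its reproducing property, which requires establishing that the difference quotients $\ep^{-1}(\Phi(\omega+\ep\eta)-\Phi(\omega))$ converge \emph{in $\cH$-norm}; you correctly flag this as the main obstacle. The paper sidesteps this step: it applies Cauchy--Schwarz \emph{before} sending $\ep\to0$, so the second factor becomes $\lim_{\ep\to0}\ep^{-1}\|\Phi(t,x,\gamma+\ep\eta)-\Phi(t,x,\gamma)\|_\cH$, which it then identifies directly with $\|\partial^\eta_\gamma S(\gamma^g)\|_{\overline{T(\cH_g)}}$ (plus the $\varphi$-term) and bounds via Lemma~\ref{lemma:bound_sig_appendix}. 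In particular the paper never needs to assert $\psi^\partial_\omega\in\cH$; only the limit of norms is used, not norm convergence. This makes the paper's route marginally more elementary, while your route is a little more conceptual and, once the $\cH$-convergence is secured, yields the clean identity $\|\psi^\partial_\omega\|_\cH^2=\partial^{(1)}\partial^{(2)}\kappa(\omega,\omega)$. Relatedly, the paper controls this quantity via the \emph{signature} estimates of Lemma~\ref{lemma:bound_sig_appendix} rather than the kernel estimates of Lemma~\ref{lemma:bound_kappag_appendix}, since the mixed derivative in both arguments unwinds to $\|\partial^\eta_\gamma S(\gamma^g)\|^2$; your citation of Lemma~\ref{lemma:bound_kappag_appendix} is morally right but one level removed.
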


\begin{proof}
Let $h\in\cH$ such that $h=\sum_{i=1}^\infty \alpha_i \kappa(\omega^i,\cdot)$, with $(\alpha_i)_{i\in\bN}$ in~$\bR$ and~$(\omega^i)_{i\in\bN}$ in $\cX$. We prove the claim only for the pathwise derivative~$\partial\in\{\wpar^\eta_\gamma,\wpar^{\eta\etab}_\gamma\}$, with~$\eta,\etab \in \BV_0$, and note that the other cases can be proved in an identical fashion, as the RBF kernel $k$ is infinitely many times differentiable and its derivatives are uniformly bounded. An important observation due to \eqref{eq:Pathwise_restricted_def} is that for all~$(s,x,\gamma), (t,y,\tau) \in \cX$ we have
$$
\wpar^\eta_\gamma \kappa((s,x,\gamma), (t,y,\tau)) = k((s,x),(t,y))\partial^{\eta\lvert_{[t,T]}}_\gamma \kappa_{\text{sig}}^{g,\ell}(\gamma\lvert_{[t,T]},\tau).
$$
This highlights the link between the two types of pathwise derivatives~$\wpar^\eta_\gamma$ and~$\partial^\eta_\gamma$.

$(\bm{\partial^{\eta}_\gamma})$ For all~$n\in\bN$, let~$h_n:=\sum_{i=1}^n \alpha_i \kappa(\omega^i,\cdot)$ hence~$h_n$ tends to~$f$ pointwise and in RKHS norm as~$n\to\infty$. Recall the $\Phi$ was defined as the feature map associated to~$\kappa$, hence~$\Phi(\omega)=\kappa(\omega,\cdot)$. For all~$\omega\in \cX$ and $\eta\in \BV(\bT,\bR^e)$, we have and we define
$$
\partial^\eta_\gamma h_n(\omega) = \sum_{i=1}^n \alpha_i \partial^\eta_\gamma \kappa(\omega^i,\omega), \qquad h'(\omega):= \sum_{i=1}^\infty \alpha_i \partial^\eta_\gamma \kappa(\omega^i,\omega),
$$
and we will prove that~$ \partial^\eta_\gamma h_n$ converges to $h'$ uniformly on $\cX$. Let~$n\in\bN$, the Cauchy-Schwarz inequality allows us to disentangle the two limits:
\begin{align}\label{eq:Cvg_Df}
\abs{\partial^\eta_\gamma h_n(\omega)-h'(\omega) }
= \abs{\sum_{i=n+1}^\infty \alpha_i \partial_\gamma^\eta \kappa(\omega^i,\omega)}
&=\abs{ \sum_{i=n+1}^\infty \alpha_i \lim_{\ep\to0} \frac{\kappa(\omega^i,(t,x,\gamma+\ep\eta)) -\kappa(\omega^i,(t,x,\gamma))}{\ep}} \\
&=\lim_{N\to\infty}\lim_{\ep\to0}\frac{1}{\ep} \abs{\sum_{i=n+1}^N \alpha_i  \Big\langle \Phi(\omega^i), \Phi(t,x,\gamma+\ep\eta)-\Phi(t,x,\gamma) \Big\rangle} \nonumber\\
&=\lim_{N\to\infty}\lim_{\ep\to0}\frac{1}{\ep}\abs{\left\langle \sum_{i=n+1}^N \alpha_i\Phi(\omega^i), \Phi(t,x,\gamma+\ep\eta)-\Phi(t,x,\gamma) \right\rangle} \nonumber\\
&\le \lim_{N\to\infty} \norm{ \sum_{i=n+1}^N \alpha_i \Phi(\omega^i)}_{\cH} \lim_{\ep\to0}\frac{1}{\ep}\norm{\Phi(t,x,\gamma+\ep\eta)-\Phi(t,x,\gamma)}_{\cH}. \nonumber
\end{align}
We observe that
\begin{align*}
    \norm{ \sum_{i=n+1}^\infty \alpha_i \Phi(\omega^i)}_{\cH}^2
    = \left\langle \sum_{i=n+1}^\infty \alpha_i \Phi(\omega^i), \sum_{j=n+1}^\infty \alpha_j \Phi(\omega^j)\right\rangle_{\cH}
    &=\sum_{i=n+1}^\infty \alpha_i  \sum_{j=n+1}^\infty \alpha_j \left\langle \Phi(\omega^i),\Phi(\omega^j)\right\rangle_{\cH}\\
    &= \sum_{i,j=n+1}^\infty \alpha_i\alpha_j \kappa(\omega^i,\omega^j) \\
    &= \norm{h-h_n}_{\cH}^2,
\end{align*}
which tends to zero.
Furthermore,
\begin{align*}
    \lim_{\ep\to0}&\frac{1}{\ep^2}\norm{\Phi(t,x,\gamma+\ep\eta)-\Phi(t,x,\gamma)}_{\cH}^2\\
    &= k((t,x),(t,x)) \lim_{\ep\to0}\frac{1}{\ep^2} \Big( \kappa^{g,\ell}_{\text{sig}}(\gamma+\ep\eta,\gamma+\ep\eta)-2\kappa^{g,\ell}_{\text{sig}}(\gamma+\ep\eta,\gamma) + \kappa^{g,\ell}_{\text{sig}}(\gamma,\gamma) \Big)\\
    &= k((t,x),(t,x)) \bigg(\ell(\gamma_t,\gamma_t)\lim_{\ep\to0}\frac{1}{\ep^2}  \norm{ S((\gamma+\ep\eta)^g) - S(\gamma^g)}_{\overline{T(\cH_g)}}^2\\
    &\qquad+ \kappa_{\text{sig}}^g(\gamma-\gamma_t,\gamma-\gamma_t)\lim_{\ep\to0}\frac{1}{\ep^2}\norm{\varphi(\gamma_t+\ep\eta_t)-\varphi(\gamma_t)}_{\cH_\ell}^2 \bigg)\\
    &= k((t,x),(t,x)) \left(\ell(\gamma_t,\gamma_t)\norm{\partial^\eta_\gamma S(\gamma^g)}_{\overline{T(\cH_g)}}^2 +\kappa_{\text{sig}}^g(\gamma-\gamma_t,\gamma-\gamma_t)\norm{\partial^{\eta_t}_{\gamma_t}\varphi(\gamma_t)}_{\cH_\ell}^2 \right)
\end{align*}
which is uniformly bounded over all $\omega\in\ \cX$ by Lemma \ref{lemma:bound_sig_appendix} and Assumption~\ref{assu:featuremaps} for~$\ell$. We used dominated convergence to pass to the last line which follows from~\eqref{eq:bound_sig_DS0}.
This entails the convergence of $\partial^\eta_\gamma h_n$ towards $h'$ is uniform in $\cX$;
therefore~$h$ is differentiable and $\partial^\eta_\gamma h=h'=\sum_{i=1}^\infty \alpha_i \partial^\eta_\gamma\kappa(\omega^i,\cdot)$. 

$(\bm{\partial^{\eta\etab}_\gamma})$ For all~$\omega\in \cX$ and $\eta,\etab\in \BV(\bT,\bR^e)$, we have and we set
$$
\partial^{\eta\etab}_\gamma h_n(\omega) = \sum_{i=1}^n \alpha_i \partial^{\eta\etab}_\gamma \kappa(\omega^i,\omega), \qquad h''(\omega):= \sum_{i=1}^\infty \alpha_i \partial^{\eta\etab}_\gamma \kappa(\omega^i,\omega),
$$
and we will prove that~$\lim_{n\to\infty} \partial^{\eta\etab}_\gamma h_n=h''$. First note that
\begin{align*}
\partial^{\eta\etab}_\gamma \kappa(\omega^i,\omega)
&= \lim_{\ep\to0} \frac{1}{\ep} \Big( \partial^\etab_\gamma \kappa(\omega^i,(t,x,\gamma+\ep\eta)) - \partial^\etab_\gamma \kappa(\omega^i,(t,x,\gamma))\Big) \\
&= \lim_{\ep\to0}\lim_{\delta\to0} \frac{1}{\ep\delta} \Big( \kappa(\omega^i,(t,x,\gamma+\ep\eta+\delta \etab)) - \kappa(\omega^i,(t,x,\gamma+\ep\eta)) - \kappa(\omega^i,(t,x,\gamma+\delta\etab)) + \kappa(\omega^i,(t,x,\gamma)) \Big)\\
&= \lim_{\ep\to0}\lim_{\delta\to0} \frac{1}{\ep\delta} \Big\langle \Phi(\omega^i), \, \Phi(t,x,\gamma+\ep\eta+\delta \etab)) - \Phi(t,x,\gamma+\ep\eta)) - \Phi(t,x,\gamma+\delta\etab)) + \Phi(t,x,\gamma)\Big\rangle.
\end{align*}
Applying the same approach as for the first derivative, we obtain by Cauchy-Schwarz inequality
\begin{align*}
&\abs{\partial^{\eta\etab}_\gamma h_n(\omega)-h''(\omega) }
=\abs{ \sum_{i=n+1}^\infty \alpha_i \partial^{\eta\etab}_\gamma \kappa(\omega^i,\omega) }\\
&= \lim_{N\to\infty} \lim_{\ep\to0}\lim_{\delta\to0} \frac{1}{\ep\delta} \abs{ \left\langle \sum_{i=n+1}^N \alpha_i \Phi(\omega^i), \, \Phi(t,x,\gamma+\ep\eta+\delta \etab) - \Phi(t,x,\gamma+\ep\eta) - \Phi(t,x,\gamma+\delta\etab) + \Phi(t,x,\gamma)\right\rangle
} \\
& \le \norm{h-h_n}_{\cH} \lim_{\ep\to0}\lim_{\delta\to0} \frac{1}{\ep\delta} \norm{\Phi(t,x,\gamma+\ep\eta+\delta \etab) - \Phi(t,x,\gamma+\ep\eta) - \Phi(t,x,\gamma+\delta\etab) + \Phi(t,x,\gamma)}_{\cH} \\
&=\norm{h-h_n}_{\cH} k((t,x),(t,x))
\bigg(\norm{\varphi(\gamma_t)}_{\cH_\ell} \norm{\partial^{\eta\etab}_\gamma S(\gamma^g)}_{\overline{T(\cH_g)}}
+ \norm{\partial^{\eta_t}_{\gamma_t}\varphi(\gamma_t)}_{\cH_\ell}\norm{\partial^{\etab}_\gamma S(\gamma^g)}_{\overline{T(\cH_g)}} \\
&\qquad \qquad + \norm{\partial^{\etab_t}_{\gamma_t}\varphi(\gamma_t)}_{\cH_\ell}\norm{\partial^{\eta}_\gamma S(\gamma^g)}_{\overline{T(\cH_g)}}
+ \norm{\partial^{\eta_t\etab_t}_{\gamma_t}\varphi(\gamma_t)}_{\cH_\ell}\norm{S(\gamma^g)}_{\overline{T(\cH_g)}}
\bigg),
\end{align*}
where we used dominated convergence as for the first derivative.
Lemma \ref{lemma:bound_sig_appendix} and Assumption~\ref{assu:featuremaps} ensure 
that~$\partial^{\eta\etab}_\gamma h_n$ converges towards $h''$ uniformly in $\cX$ and thus $h$ is twice differentiable with $\partial^{\eta\etab}_\gamma h=h'' = \sum_{i=1}^\infty \alpha_i \partial^{\eta\etab}_\gamma\kappa(\omega^i,\cdot)$.
\end{proof}

\begin{remark}\label{rem:compact_consequences}
    Proposition 4.1 proves that a function $h|_{\mathcal{X}}$ of the RKHS restricted to $\mathcal{X}$ is indeed differentiable. This is not guaranteed on the whole space of bounded variation paths~$\Omega$ as the proof exploits compactness to obtain uniform bounds. This assumption could however be lifted by exploiting weighted spaces as in~\cite{cuchiero2023global} or robust signatures as in~\cite{chevyrev2018signature}.
\end{remark}


Now that we have checked that the constraints of the optimal recovery problem~\eqref{eq:OptiRecovery} make sense, we can solve this optimisation problem explicitly due to a variant of the representer theorem involving higher order derivatives \cite{owhadi2019operator} materialising in \textbf{PDE scheme} just before Remark \ref{remark:2}, which gives an answer to Questions (2) and (3) of our list~\ref{enum:questions}.

\begin{remark}
    The matrix $\widetilde \cK$ is well-defined by the differentiability of the product kernel deriving from assumptions~\ref{assu:statickernel} and~\ref{assu:featuremaps} on ~$g$ and~$k,\ell$
\end{remark}

\begin{remark}
    Note that~$(\alpha_i)_{i=1}^{m+n}$ crucially depend on the collocation points and on~$m$ and~$n$; in particular they have no reason to remain stable for different values of~$m,n$. This makes comparison among the terms of the sequence~$(u_{m,n})_{m,n}$ much trickier. As another avenue for future research, we note that orthogonalising the collocation points in a way that the~$(\alpha_i)_i$ remain constant for all~$m,n$ could lead to better control of the norm.
\end{remark}

\subsection{Consistency}

For any subset~$\cN\in\bN^3_0$, let us define the $\cC^{\cN}(\cX)$ norm as 
$$
\norm{h}_{\cN}:= 
\sum_{(n_1,n_2,n_3)\in\cN} \sup \Big\{\abs{\partial_t^{n_1} \partial_x^{n_2} \partial_\gamma^{\bm{\eta}} h(t,x,\gamma)}: (t,x,\gamma)\in\cX,\bm{\eta}\in \BV^{\otimes n_3}_0,\, \norm{\bm{\eta}}_{1-var;\bT} \le 1 \Big\} .
$$
Due to the fact that our estimates in~\Cref{lemma:bound_sig_appendix} do not hold for all~$n\in\bN$, we restrict to the specification needed in our examples
\begin{equation}\label{eq:cN_spec}
\cN= \{(n_1,n_2,n_3)\in \bN^3_0: n_1\le 1, n_2=n_3=0, \text{ or  } n_1=0, n_2+n_3\le 2\}.    
\end{equation}
This abstract definition is simply a way of restricting the range of derivatives to~$\mathscr{D}:=\{{\rm id}, \partial_t,\partial_x,\partial^2_x,\partial^\eta_\gamma\partial_x, \partial_\gamma^\eta, \partial^{\eta\etab}_{\gamma}\}$. These are precisely the partial derivatives appearing in the parabolic PPDEs of interest (see Section~\ref{subsec:examples}). In particular it excludes derivatives of order higher than three and crossed derivatives such as~$\partial_{tx}$. 
Therefore the norm reads
\begin{equation}\label{eq:def_norm_CN}
\norm{h}_{\cN}:= 
\sum_{\partial\in\mathscr{D}} \sup \Big\{\abs{\partial h(\omega)}: \omega\in\cX,\eta,\etab\in \BV_0,\, \norm{\eta}_{1}\vee\norm{\etab}_{1-var} \le 1 \Big\} .
\end{equation} 
Our main result, inspired by~\cite[Theorem 1.2]{chen2021solving}, follows. Its proof is given at the end of the section. 
\begin{theorem}\label{th:MainCvg}
Let Assumptions~\ref{assu:statickernel} and~\ref{assu:featuremaps} hold for~$g$ and~$k,\ell$ respectively. 
Let~$\cN$ be as in~\eqref{eq:cN_spec} with~$\eta,\etab\in\BV_0$ and assume either one of the following conditions holds: 
\begin{enumerate}[(i)]
    \item The family~$(u_{m,n})_{m,n\in\bN}$ is relatively compact in~$\cC^{\cN}(\cX)$ and there exists a unique solution $u^\star$ to the PPDE~\eqref{eq:mainPDE} in~$\cC^{\cN}(\cX)$; 
    \item There exists a unique solution $u^\star$ to the PPDE~\eqref{eq:mainPDE} in~$\cH$.
\end{enumerate} 
If, moreover, as $m$ and $n$ tend to infinity, the collocation points~$(\omega^i)_{i=1}^{m}$ and~$(\omega^i)_{i=m+1}^{m+n}$ form a dense family of $\cX^\circ$ and~$\partial\cX$ respectively, then $(u_{m,n})_{m,n\in\bN}$ converges towards~$u^\star$ in the $\cC^{\cN}$-topology as~$m,n$ tend to~$+\infty$.
\end{theorem}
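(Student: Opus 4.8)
The plan is to follow the consistency argument of \cite[Theorem~1.2]{chen2021solving}, substituting for its finite-dimensional ingredients the uniform signature-kernel estimates of Proposition~\ref{proposition:C2} and Lemmas~\ref{lemma:bound_sig_appendix}--\ref{lemma:bound_kappag_appendix}. The two cases share a common core: the minimiser $u_{m,n}$ exists, is a finite kernel combination, and satisfies all collocation constraints (Section~\ref{sec:or-wp}); any subsequential limit $u^\dagger$ of $(u_{m,n})$ — weak in $\cH$ in case~(ii), in the $\cC^{\cN}(\cX)$ topology in case~(i) — turns out to solve the PPDE, because the collocation points become dense in $\cX^\circ$ and $\partial\cX$ and both sides of the equations are continuous; uniqueness then forces $u^\dagger=u^\star$; and since every subsequence admits a further subsequence with the same limit $u^\star$, the whole family converges. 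A preliminary step (Step~0), valid in both cases, records the continuous embedding $\cH\hookrightarrow\cC^{\cN}(\cX)$: the Cauchy--Schwarz computations in the proof of Proposition~\ref{proposition:C2}, together with the uniform bounds of Lemma~\ref{lemma:bound_sig_appendix} and Assumption~\ref{assu:featuremaps}, give $\norm{h}_{\cN}\le C_{\cN}\norm{h}_{\cH}$ for every finite combination $h=\sum_i\alpha_i\kappa(\omega^i,\cdot)$, with $C_{\cN}$ independent of the combination; since such $h$ are dense in $\cH$ and $(\cC^{\cN}(\cX),\norm{\cdot}_{\cN})$ is complete, this extends to all of $\cH$, so each $h\in\cH$ is genuinely twice differentiable and every map $h\mapsto\partial h(\omega)$, $\partial\in\cD$, is a bounded linear functional on $\cH$ with norm $\le C_{\cN}$, uniformly in $\omega\in\cX$ and in the unit-norm directions; in particular $h\mapsto\cL_t h(\omega)$ is bounded on $\cH$.

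\emph{Case (ii).} Since $u^\star\in\cH$ solves the PPDE it satisfies every collocation constraint, hence is feasible for \eqref{eq:OptiRecovery_again}; by minimality $\norm{u_{m,n}}_{\cH}\le\norm{u^\star}_{\cH}$, so $(u_{m,n})$ is bounded in $\cH$ and any subsequence has a weakly convergent sub-subsequence $u_{m_k,n_k}\rightharpoonup u^\dagger\in\cH$. Applying the bounded functionals of Step~0, $\partial u_{m_k,n_k}(\omega)\to\partial u^\dagger(\omega)$ for every fixed $\omega$, $\eta$, $\etab$, $\partial\in\cD$; in particular $\cL_t u_{m_k,n_k}(\omega)\to\cL_t u^\dagger(\omega)$ and $u_{m_k,n_k}(\omega)\to u^\dagger(\omega)$ pointwise. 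Evaluating at a fixed interior collocation point, which for $k$ large is a constraint of the $(m_k,n_k)$-problem, yields $\cL_t u^\dagger=g$ there; likewise $u^\dagger=\phi$ at every boundary collocation point. As the collocation points are dense in $\cX^\circ$ and $\partial\cX$ and $\cL_t u^\dagger$, $u^\dagger$ are continuous (Step~0), we get $\cL_t u^\dagger=g$ on $\cX^\circ$ and $u^\dagger=\phi$ on $\partial\cX$, i.e.\ $u^\dagger$ solves the PPDE; uniqueness in $\cH$ gives $u^\dagger=u^\star$. Hence every weak subsequential limit of the bounded family $(u_{m,n})$ is $u^\star$, so $u_{m,n}\rightharpoonup u^\star$ in $\cH$. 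Finally, weak convergence together with $\norm{u_{m,n}}_{\cH}\le\norm{u^\star}_{\cH}$ gives
\[
\limsup_{m,n\to\infty}\norm{u_{m,n}-u^\star}_{\cH}^2=\limsup_{m,n\to\infty}\bigl(\norm{u_{m,n}}_{\cH}^2-2\langle u_{m,n},u^\star\rangle_{\cH}+\norm{u^\star}_{\cH}^2\bigr)\le 0,
\]
so $u_{m,n}\to u^\star$ strongly in $\cH$, hence in $\cC^{\cN}(\cX)$ by $\norm{\cdot}_{\cN}\le C_{\cN}\norm{\cdot}_{\cH}$.

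\emph{Case (i).} Here one argues directly in $\cC^{\cN}(\cX)$. By the assumed relative compactness, any subsequence of $(u_{m,n})$ has a further subsequence $u_{m_k,n_k}\to u^\dagger$ in $\cC^{\cN}(\cX)$, so $u^\dagger\in\cC^{\cN}(\cX)$ and $\cL_t u_{m_k,n_k}\to\cL_t u^\dagger$, $u_{m_k,n_k}\to u^\dagger$ uniformly on $\cX$, since $\cL_t$ is continuous from $(\cC^{\cN}(\cX),\norm{\cdot}_{\cN})$ to $(\cC(\cX),\norm{\cdot}_{0})$ (a finite combination of the derivatives indexed by $\cD$ with directions of fixed finite $1$-variation, controlled by multilinearity). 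Evaluating at a fixed collocation point, which for $k$ large belongs to the $(m_k,n_k)$-problem, gives $\cL_t u^\dagger=g$ at every interior collocation point and $u^\dagger=\phi$ at every boundary one; density of the collocation points and continuity of $\cL_t u^\dagger$, $u^\dagger$ upgrade this to $\cL_t u^\dagger=g$ on $\cX^\circ$, $u^\dagger=\phi$ on $\partial\cX$, so $u^\dagger$ solves the PPDE. Uniqueness in $\cC^{\cN}(\cX)$ gives $u^\dagger=u^\star$, and the subsequence principle in the relatively compact metric space $\cC^{\cN}(\cX)$ yields $u_{m,n}\to u^\star$ in the $\cC^{\cN}$-topology.

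The genuine difficulty, beyond this bookkeeping, is the infinite-dimensional passage to the limit in the constraints: one must know that point and, crucially, \emph{directional} evaluations on the path component are bounded linear functionals on $\cH$ \emph{uniformly} over the compact set $\cX\subset\Omega$, so that weak (resp.\ uniform) convergence transfers to every term of $\cL_t$; this is exactly what Proposition~\ref{proposition:C2}, Proposition~\ref{prop:cty_signature} and the appendix estimates deliver, once one passes from finite kernel combinations to all of $\cH$ by density. The relative compactness required to close Case~(i) is itself supplied by Lemma~\ref{lemma:compact_H}. One small bookkeeping point: we assume throughout that the collocation points are chosen in a nested fashion — the $(m,n)$-problem uses the first $m$ interior and first $n$ boundary points of two fixed dense sequences in $\cX^\circ$ and $\partial\cX$ — so that each fixed collocation point remains a constraint for all larger $(m,n)$.
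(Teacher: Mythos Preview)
Your proof is correct. Case~(i) is essentially the paper's argument, but Case~(ii) takes a genuinely different route. The paper, under~(ii), first derives the uniform $\cC^{\cN}$ bound $\sup_{\omega}\abs{\partial u_{m,n}(\omega)}\le C\norm{u^\star}_{\cH}$ and then invokes Lemma~\ref{lemma:compact_H} (Arzel\`a--Ascoli on path space) to obtain relative compactness of $(u_{m,n})$ in $\cC^{\cN}(\cX)$; the limit $u_\infty$ is identified with $u^\star$ via the triangle-inequality estimate~\eqref{eq:convergence_triangle}, which exploits \emph{uniform} convergence of $\cL_t u_{m_k,n_k}$ and therefore does not need the nesting of collocation points you make explicit. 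You instead stay in the Hilbert space: bounded $\Rightarrow$ weakly sequentially compact, weak limits solve the PPDE by pointwise convergence along bounded evaluation functionals (hence your nesting assumption is genuinely used here), uniqueness pins down $u^\star$, and the standard ``weak convergence $+$ $\limsup\norm{u_{m,n}}_{\cH}\le\norm{u^\star}_{\cH}$'' trick upgrades to strong $\cH$-convergence, whence $\cC^{\cN}$-convergence via your Step~0. Your approach is more elementary---it bypasses Lemma~\ref{lemma:compact_H} entirely and uses only generic Hilbert-space facts---and delivers the stronger conclusion $u_{m,n}\to u^\star$ in $\cH$; the paper's approach is more uniform across~(i) and~(ii) and showcases the equicontinuity estimates specific to signature kernels. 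One small remark: your closing sentence that Lemma~\ref{lemma:compact_H} ``supplies'' the compactness in Case~(i) is slightly misleading, since in~(i) compactness is a hypothesis; Lemma~\ref{lemma:compact_H} is rather the tool the paper uses to \emph{verify} that hypothesis under~(ii).
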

The assumptions of the theorem all deserve separate discussion, studied in reverse order.

\textbf{On the collocation points.} 
It is natural to ask the collocation points to be dense in~$\cX$, however this leaves a lot of flexibility for choosing them in an optimal way, especially in the space of paths. This issue is directly related to the rate of convergence of the method which lies beyond the scope of this paper. In practice though, one is only interested in covering the support of the evaluation points. During the course of our experiments, we found that sampling the collocation points randomly (as Brownian motion trajectories) led to similar accuracy as sampling them from the same distribution as the evaluation points. 

\textbf{On the well-posedness of~\eqref{eq:mainPDE}}
 Existence and uniqueness of~\eqref{eq:mainPDE} is discussed in several examples of interest in Section~\ref{subsec:examples}, and in the more general Volterra case in~\cite{bonesini2023rough}.
Uniqueness allows us to show that all the converging subsequences of~$(u_{m,n})_{m,n}$ have the same limit, thus implying the sequence does converge. 
Unfortunately, known uniqueness results hold on a subset of~$\cC(\tom)$ whereas we require uniqueness in (a subset of)~$\cC(\cX)$ where~$\cX$ is strictly included in~$\tom$. Notice that the solution to the PPDE defined over the space $\tom$ is also solution over the subspace $\mathcal{X}$, however it may not be unique over the subspace anymore.

We mentioned in Remark \ref{rem:compact_consequences} that the assumption that $\mathcal{X}$ is a compact subspace of $\Omega$ could be lifted. At the cost of increased technicality one can also replace the space of bounded variation paths with a space of arbitrary $p$-variation paths. Uniqueness of the solution to the PPDE should then be attainable by classical probabilistic techniques (i.e. Feynman--Kac theorem).

\textbf{On the assumption~$u^\star \in \cH$.}
\vspace{-.1cm}
\begin{itemize}
    \item Proving convergence of~$(u_{m,n})_{m,n}$ at the very least requires to find a norm in which this family is uniformly bounded.
    The reason for assuming~$u^\star \in \cH$ is that it implies both the RKHS and~$\cC^{\cN}$ norms of~$(u_{m,n})_{m,n}$ are uniformly bounded, which in turn yields the relative compactness of this family in~$\cC^{\cN}$, by Lemma~\ref{lemma:compact_H}. It is still slightly different from condition (i) because uniqueness holds in a different space. 
    \item If $u^\star\notin\cH$ then the RKHS norm cannot be expected to be bounded and we lose our most promising tool to derive estimates for other norms. Note that we do not need equicontinuity if we look for compactness in~$\bR$, but we still require~$u_{m,n}$ and their derivatives to be bounded uniformly in~$\bN^2$.
    \item For $u^\star$ to be in~$\cH$ would require at minima to be in~$\cC^\infty$. In this direction, Theorem 3.10 in~\cite{dupire2022functional} shows how to expand a path (semimartingale) functional with respect to terms of the signature. It is conceivable that such a representation also holds in the fractional setting for sufficiently smooth functionals, which could then be identified to an element of the signature RKHS. When the underlying process is a semimartingale, smoothness of the payoff function (and the coefficients) essentially ensures smoothness of the conditional expectation (i.e. the solution to the Kolmogorov equation). However when the direction of the pathwise derivative is singular (only square integrable) one can only prove twice differentiabiliy of the solution (see \cite[Proposition 2.23]{bonesini2023rough}). This remains a glass ceiling until one unveils how to exploit the regularisation properties of the expectation.
\end{itemize}


\begin{remark}
    Several other works in the literature assume that $u^\star\in\cH$. The authors of~\cite{chen2021solving} make in addition the classical assumption that $\cH$ is embedded in a Sobolev space, a condition we drop because we are able to prove sufficient regularity only with the help of the bounded RKHS norm. In finite dimensions, this condition allows to prove convergence rates, see \cite[Proposition 11.30]{wendland2004scattered}. See also \cite{batlle2023error} who provide error bounds under additional assumptions. 
    
\end{remark}

The proof strategy for~\Cref{th:MainCvg} using condition (ii) consists in showing that~$(u_{m,n})$ is relatively compact and then extracting converging subsequences. Hence before presenting it we need to identify compact subsets of $\cH$ with respect to the $\cC^{\cN}$-topology.
The RKHS norm is linked to the regularity of the function, therefore a family of~$\cH$ bounded in RKHS norm is equicontinuous. If this family is also bounded in~$\cC^{\cN}$ norm we can conclude by Arzelà-Ascoli theorem that it is relatively compact.
\begin{lemma}\label{lemma:compact_H}
Let Assumptions~\ref{assu:statickernel} and~\ref{assu:featuremaps} hold for~$g$ and~$k,\ell$ respectively.
Let~$\cY\subset \cH$ be bounded under both the RKHS and $\cC^{\cN}$ norms, then~$\cY$ is relatively compact with respect to the $\cC^{\cN}$ topology.
\end{lemma}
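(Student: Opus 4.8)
The plan is to run an Arzelà--Ascoli-type argument, with the RKHS bound supplying equicontinuity (through the reproducing property) and the $\cC^{\cN}$ bound --- which in fact is already a consequence of the RKHS bound --- supplying uniform boundedness of all the derivatives appearing in $\cD$. After rescaling we may assume $\cY$ lies in the closed unit ball of $\cH$. By Proposition~\ref{proposition:C2}, together with the uniform estimates of Lemmas~\ref{lemma:bound_sig_appendix} and~\ref{lemma:bound_kappag_appendix}, for every $\partial\in\cD$ and every admissible pair of directions $\eta,\etab$ with $\norm{\eta}_{1-var}\vee\norm{\etab}_{1-var}\le 1$ the map $h\mapsto\partial h(\omega)$ is a bounded linear functional on $\cH$ whose norm is bounded uniformly in $\omega\in\cX$ and in the directions; writing $\partial h(\omega)=\langle h,\Phi_\partial(\omega)\rangle_\cH$, where $\Phi_\partial(\omega)\in\cH$ depends (for the pathwise derivatives) also on the chosen directions, this yields a finite constant $C_\partial:=\sup\{\norm{\Phi_\partial(\omega)}_\cH:\omega\in\cX,\ \norm{\eta}_{1-var}\vee\norm{\etab}_{1-var}\le 1\}$, and in particular $\norm{h}_{\cN}\le(\sum_{\partial\in\cD}C_\partial)\norm{h}_\cH$ for all $h\in\cH$.

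\textbf{Compactness of the feature sets.} The core of the proof is the claim that, for each $\partial\in\cD$, the set $\cK_\partial:=\{\Phi_\partial(\omega):\omega\in\cX,\ \norm{\eta}_{1-var}\vee\norm{\etab}_{1-var}\le 1\}$ is relatively compact in $\cH$. For the direction-free derivatives $\partial\in\{\mathrm{id},\partial_t,\partial_x,\partial_x^2\}$ this is immediate: $\cX$ is compact for $\bm d$, and $\omega\mapsto\Phi_\partial(\omega)$ is $\cH$-continuous because $\norm{\Phi_\partial(\omega)-\Phi_\partial(\omega')}_\cH^2$ is a second difference of the map $(\omega,\omega')\mapsto\langle\Phi_\partial(\omega),\Phi_\partial(\omega')\rangle_\cH$, a mixed second derivative of $\kappa$ which is continuous in its arguments by Remark~\ref{rem:kappa_C2} and the CDE representations of Proposition~\ref{prop:der_kappag} (continuity of Young integrals plus Grönwall, exactly as in the estimates behind Lemma~\ref{lemma:bound_kappag_appendix}); thus $\cK_\partial$ is a continuous image of the compact set $\cX$. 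For the direction-dependent derivatives $\partial\in\{\partial_\gamma^\eta\partial_x,\partial_\gamma^\eta,\partial_\gamma^{\eta\etab}\}$ one argues the same way, but now $\cK_\partial$ is indexed also by $\eta$ (and $\etab$) ranging over the closed unit ball of $(\BV_0,\norm{\cdot}_{1-var})$, which is \emph{not} compact in the $1$-variation topology; here one instead uses that this ball is sequentially compact for the topology of pointwise convergence (Helly's selection theorem) together with weak-$*$ convergence of the increments (Banach--Alaoglu, the increments being signed measures of total mass $\le 1$), that the $1$-variation norm is lower semicontinuous under pointwise convergence (so limit directions stay in the ball), and that the doubly differentiated kernel $\langle\Phi_\partial(\omega),\Phi_{\partial}(\omega')\rangle_\cH$ solves a linear CDE driven by the path parts of $\omega,\omega'$ whose source terms are linear in the directions and in their increments, hence depends continuously on $(\omega,\eta,\etab)$ and $(\omega',\eta',\etab')$ along the above convergences (Grönwall, using $\int f^n\,d\gamma^n\to\int f\,d\gamma$ and $\int f^n\,d\eta^n\to\int f\,d\eta$ for the relevant uniformly/weak-$*$-convergent data, and recalling $\cX$ carries bounded $1$-variation so all constants are uniform). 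This makes $\cK_\partial$ closed and totally bounded, hence compact.

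\textbf{Conclusion.} Given the claim, take any sequence $(h_k)\subset\cY$. Since $\cH$ is a Hilbert space and $(h_k)$ is bounded, a subsequence converges weakly, $h_{k_j}\rightharpoonup h_\infty\in\cH$; by Proposition~\ref{proposition:C2} the restriction $h_\infty|_\cX$ lies in $\cC^{\cN}(\cX)$ and $\partial h_\infty(\omega)=\langle h_\infty,\Phi_\partial(\omega)\rangle_\cH$ for every $\partial\in\cD$. For each $\partial\in\cD$,
\[
\sup\{\,\abs{\partial h_{k_j}(\omega)-\partial h_\infty(\omega)}:\omega\in\cX,\ \norm{\eta}_{1-var}\vee\norm{\etab}_{1-var}\le 1\,\}=\sup_{v\in\cK_\partial}\abs{\langle h_{k_j}-h_\infty,v\rangle_\cH}\xrightarrow[j\to\infty]{}0,
\]
because weak convergence in a Hilbert space is uniform on norm-compact sets (the Arzelà--Ascoli ingredient, applied to $\cK_\partial$). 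Summing over the finite set $\cD$ gives $\norm{h_{k_j}-h_\infty}_{\cN}\to 0$, so every sequence in $\cY$ has a subsequence converging in $\cC^{\cN}(\cX)$ to a limit in $\cH|_\cX\subset\cC^{\cN}(\cX)$; this is exactly the asserted relative compactness. I expect the main obstacle to be the compactness of $\cK_\partial$ for the pathwise-derivative components, namely the joint continuity of the twice-differentiated signature kernel when the base paths converge in $p$-variation while the perturbation directions converge only pointwise / weak-$*$ --- this is where Propositions~\ref{prop:der_kappag}--\ref{prop:cty_signature} and the Young/Grönwall estimates behind Lemma~\ref{lemma:bound_kappag_appendix} are essential, and where one is forced to trade the $1$-variation topology on directions for the weaker one in which its unit ball is compact.
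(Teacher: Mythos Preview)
Your approach is correct but genuinely different from the paper's. The paper runs Arzelà--Ascoli directly on each family $(\partial h_n)_n$ viewed as functions on~$\cX$: from the reproducing property and Cauchy--Schwarz one has $\abs{\partial h_n(\omega)-\partial h_n(\bar\omega)}\le\norm{h_n}_\cH\,\norm{\Phi_\partial(\omega)-\Phi_\partial(\bar\omega)}_\cH$, and the second factor is bounded by $C\norm{\gamma-\bar\gamma}_0$ via the explicit Lipschitz estimates $\overline{DS_0},\overline{DS_1},\overline{DS_2}$ of Lemma~\ref{lemma:bound_sig_appendix}, with constants uniform over $\cX$ and over directions with $\norm{\eta}_1\vee\norm{\etab}_1\le 1$; a finite diagonal extraction across the symbols in~$\cD$ then concludes. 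You instead extract a weak limit in~$\cH$ and upgrade to $\cC^{\cN}$ convergence through norm-compactness of the feature sets~$\cK_\partial$. What your route buys is an explicit handling of the supremum over directions~$\eta,\etab$ in the $\cC^{\cN}$ norm, a point the paper's finite diagonal extraction leaves implicit; your Helly/weak-$*$ mechanism for the pathwise components is sound and can be completed exactly along the lines you sketch (by linearity in~$\eta$ it reduces to showing $\partial_\gamma^{\zeta_n}S(\gamma^g)\to 0$ when $\zeta_n\to 0$ pointwise with bounded $1$-variation, and in the CDE~\eqref{eq:PDE_dersig} the source $\mathrm d\{\partial_\gamma^{\zeta_n}G(\gamma)\}$ splits into a $\zeta_n\,\mathrm d\gamma$-part handled by dominated convergence and a $\mathrm d\zeta_n$-part integrated against a continuous $\overline{T(\cH_g)}$-valued integrand, handled by a step-function approximation since $\zeta_n(b)-\zeta_n(a)\to 0$ on intervals). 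Conversely, the paper's argument is more elementary---no weak-$*$ compactness, no vector-valued Helly---and delivers Lipschitz rather than merely continuous dependence on~$\omega$, which is the sharper input if one later wants quantitative rates.
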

\begin{proof}
We will prove sequential compactness thanks to Arzelà--Ascoli theorem. 
Consider a sequence of functions $(h_n)_{n\in\bN}\subset\cY$, bounded under the $\cC^{\cN}$-topology and the RKHS norm by a constant $C_h$. 
We consider~$\omega=(t,x,\gamma),\omegab=(t,x,\gammab)\in \cX$, and we have by the reproducing property, Cauchy-Schwarz inequality and~\eqref{eq:bound_sig_DS0} that
\begin{align*}
    \abs{h_n(\omega)-h_n(\omegab)} 
    &= \abs{\langle h_n, \Phi(\omega)-\Phi(\omegab) \rangle_\cH} \\
    &\le \norm{h_n}_{\cH} \norm{\Phi(\omega)-\Phi(\omegab)}_\cH\\
    &\le C_h  \sqrt{k((t,x),(t,x)) } \left( \norm{\varphi(\gamma_t)}_{\cH_\ell}\norm{ S(\gamma^g) - S(\gammab^g)}_{\overline{T(\cH_g)}}
    +\norm{\varphi(\gamma_t)-\varphi(\gammab_t)}_{\cH_\ell}\norm{S(\gammab^g)}_{\overline{T(\cH_g)}}\right)\\
    &\le C_h C_\varphi \sqrt{k_\infty} \norm{\gamma-\gammab}_{0;[t,T]} \left(\overline{DS_0}(\abs{\gamma}_{1-var;[t,T]},\abs{\gammab}_{1-var;[t,T]})+\overline{S_0}(\abs{\gammab}_{1-var;[t,T]})\right)
\end{align*}
where~$k_\infty:=\sup_{(t,x,\gamma)\in\cX} k((t,x),(t,x))<+\infty$ and~$C_\varphi<\infty$ by Assumption~\ref{assu:featuremaps}. 
Since $\overline{S_0}$ and~$\overline{DS_0}$ are continuous and $\abs{\gamma}_{1-var;[t,T]},\abs{\gammab}_{1-var;[t,T]}$ are bounded by $R$, there exists a constant $C>0$, independent of $\omega$, such that $\abs{h_n(\omega)-h_n(\omegab)}\le C \norm{\gamma-\gammab}_{0;[t,T]}\le C \norm{\gamma-\gammab}_{p-var;[t,T]}$. Therefore, $\{h_n\}_{n\in\bN}$ is equicontinuous on the compact $\cX$ as it is equipped with the $p$-variation norm.

Following the approach of \eqref{eq:Cvg_Df} and leveraging on~\eqref{eq:bound_sig_DS1} and~\eqref{eq:bound_sig_DS2}, similar bounds hold for $h_n$'s derivatives with $\eta,\etab\in \BV(\bT,V)$. Similarly to the computations in the proof of Proposition~\ref{proposition:C2}, Assumption~\ref{assu:featuremaps} and Lemma~\ref{lemma:bound_sig_appendix} yield
\begin{align*}
    &\abs{\partial^\eta_\gamma h_n(\omega)-\partial^\eta_\gamma h_n(\omegab)} \\
&\le C_h  k_\infty \bigg(\norm{\varphi(\gamma_t)}_{\cH_\ell} \norm{\partial^{\eta}_\gamma S(\gamma^g)-\partial^{\eta}_\gamma S(\gammab^g)}_{\overline{T(\cH_g)}}
+ \norm{\partial^{\eta_t}_{\gamma_t}\varphi(\gamma_t)}_{\cH_\ell}\norm{ S(\gamma^g)- S(\gammab^g)}_{\overline{T(\cH_g)}} \\
&\qquad \qquad + \norm{\varphi(\gamma_t)-\varphi(\gammab_t)}_{\cH_\ell}\norm{\partial^{\eta}_\gamma S(\gamma^g)}_{\overline{T(\cH_g)}}
+ \norm{\partial^{\eta_t}_{\gamma_t}\varphi(\gamma_t)-\partial^{\eta_t}_{\gamma_t}\varphi(\gammab_t)}_{\cH_\ell} \norm{S(\gamma^g)}_{\overline{T(\cH_g)}}
\bigg)\\
&\le C_h C_\varphi k_\infty \norm{\gamma-\gammab}_{0;[t,T]} \Big(\overline{DS_1}(\abs{\gamma}_{1},\abs{\gammab}_{1},\norm{\eta}_{1})+\overline{DS_0}(\abs{\gamma}_{1},\abs{\gammab}_{1})+\overline{S_1}(\abs{\gamma}_{1},\norm{\eta}_{1})+\overline{S_0}(\abs{\gamma}_{1}),
\end{align*}
where we abbreviated the norms to~$\abs{\gamma}_1=\abs{\gamma}_{1-var;[t,T]}$.
Finally, in the same manner one obtains
\begin{align*}
    &\abs{\partial^{\eta\etab}_\gamma h_n(\omega)-\partial^{\eta\etab}_\gamma h_n(\omegab)} \\
    &\le C_h C_\varphi k_\infty \norm{\gamma-\gammab}_{0;[t,T]} \Big(\overline{DS_2}(\abs{\gamma}_{1},\abs{\gammab}_{1},\norm{\eta}_{1},\norm{\etab}_1)
    +\overline{DS_1}(\abs{\gamma}_{1},\abs{\gammab}_{1},\norm{\eta}_{1})\\
    &+\overline{DS_1}(\abs{\gamma}_{1},\abs{\gammab}_{1},\norm{\etab}_{1})
    +\overline{DS_0}(\abs{\gamma}_{1},\abs{\gammab}_{1})
    +\overline{S_2}(\abs{\gamma}_{1},\norm{\eta}_{1},\norm{\etab}_1)
    +\overline{S_2}(\abs{\gammab}_{1},\norm{\eta}_{1},\norm{\etab}_1)\\
    &+\overline{S_1}(\abs{\gamma}_{1},\norm{\eta}_{1})
    +\overline{S_1}(\abs{\gammab}_{1},\norm{\eta}_{1})
    +\overline{S_1}(\abs{\gamma}_{1},\norm{\etab}_{1})
    +\overline{S_1}(\abs{\gammab}_{1},\norm{\etab}_{1})
    +\overline{S_0}(\abs{\gamma}_{1})
    +\overline{S_0}(\abs{\gammab}_{1})\Big).
\end{align*}
By the continuity of $\overline{S_0},\overline{S_1},\overline{S_2},\overline{DS_0},\overline{DS_1}$ and $\overline{DS_2}$, we can also conclude that the families $(\partial^{\eta}_\gamma h_n)_{n\in\bN}$ and $(\partial^{\eta\etab}_\gamma h_n)_{n\in\bN}$ are equicontinuous on the compact~$\cX$ with respect to the $p$-variation norm. The case where~$\ell(\gamma_t,\gammab_t)\neq1$ would yield the same result by exploiting the boundedness and Lipschitz continuity of~$\varphi$ and its derivatives, and the estimates from Lemma~\ref{lemma:bound_sig_appendix}.

We only give details of the equicontinuity with respect to paths since the counterpart on~$\bT\times\bR^d$ can be proved in a similar but easier fashion, using that the feature map associated to~$k$ and its two derivatives are bounded and Lipschitz continuous. Furthermore, equicontinuity of the derivatives with respect to time and space is also straightforward with the same arguments.

Coupled with the uniform~$\cC^{\cN}$ bounds, Arzelà-Ascoli's theorem implies that $(\partial h_n)_{n\in\bN}$ is relatively compact in~$\cC(\cX)$ for all~$\partial\in\mathscr{D}$. In particular, there exists a subsequence~$(n_i)_{i\in\bN}$ such that~$h_{n_i}$ converges as~$i$ goes to~$+\infty$; then $(\partial_t h_{n_i})_{i\in\bN}$ is a subset of a relatively compact space hence it is one itself and we can find a subsubsequence~$n_{i_j}$ such that~$\partial_t h_{n_{i_j}}$ converges. We can go on for each further derivative until we have found a subsequence~$(n_l)$ for which~$\partial h_{n_l}$ converges in~$\cC(\cX)$ converges for all~$\partial\in\mathscr{D}$. This is precisely convergence in~$\cC^{\cN}$ with respect to the norm defined in~\eqref{eq:def_norm_CN}.
\end{proof}
\begin{proof}[Proof of Theorem \ref{th:MainCvg}]
Assume that condition (ii) holds. 
Since~$u^\star$ solves the PPDE~\eqref{eq:mainPDE} with the Fréchet derivative~$\widetilde{\partial}^\eta$ and that the Gateaux derivative~$\widehat{\partial}^\eta$ is a weaker type, $u^\star$ also solves the weaker PPDE~\eqref{eq:BV_PPDE}. In particular, $u^\star$ satisfies the constraints of the optimal recovery problem~\eqref{eqn:exact_sol} at every point in~$\cX$, and since $u_{m,n}$ is the minimiser, we must have~$\norm{u_{m,n}}_{\cH} \le \norm{u^\star}_{\cH}<\infty$. 
Moreover, for all~$\partial\in\mathscr{D}$ and~$\eta,\etab\in\BV_0$ such that~$\max(\norm{\eta}_{1-var;[0,T]},\norm{\etab}_{1-var;[0,T]})\le1$, Cauchy-Schwarz inequality and the same calculations as in the proof of~\Cref{proposition:C2} yield
    \begin{align*}
        \sup_{\omega\in\cX} \abs{\partial u_{m,n}(\omega)}
        = \sup_{\omega\in\cX} \abs{\partial\langle u_{m,n}, \kappa(\omega,\cdot)\rangle_{\cH} }
        \le \sup_{\omega\in\cX} \norm{\partial\kappa(\omega,\cdot)}_{\cH} \norm{u_{m,n}}_{\cH}.
    \end{align*}
    We take evaluation points over a compact and $\partial\kappa(\omega,\cdot)$ is continuous hence $\sup_{\omega\in\cX} \norm{\partial\kappa(\omega,\cdot)}_{\cH}^2 <\infty$. Thus there exists~$C>0$ such that for all such~$\eta,\etab$,
    $$
    \sup_{\omega\in\cX} \abs{\partial u_{m,n}(\omega)} \le C \norm{u^\star}_{\cH}, \quad \text{for all  } \partial\in\mathscr{D}.
   $$
    This proves that $(u_{m,n})_{m,n\in\bN}$ is bounded under both the RKHS and the $\cC^{\cN}$ norms and therefore this family form a relatively compact space by Lemma \ref{lemma:compact_H}. This means that for each sequence~$(u_{m_k,n_k})_{k\in\bN}$ included in~$(u_{m,n})_{m,n\in\bN}$, where~$n_k,m_k$ diverge to infinity as~$k$ goes to infinity,
    there exists a subsequence, also denoted $(u_{m_k,n_k})_{k\in\bN}$ for conciseness, which converges in $\cC^{\cN}$ to a limit $u_\infty$, as $k$ goes to infinity. Since~$(u_{m_k,n_k})_{k\in\bN}$ is bounded under the RKHS norm, we have~$u_\infty\in\cH$.
    
    Under condition (i), the same conclusion holds except that~$u_\infty$ is an element of the closure of~$\cH$ with respect to the~$\cC^{\cN}$ norm. The universality property of the kernel derived in Lemma~\ref{lemma:univ} extends to~$\cC^\cN$ by Proposition~\ref{proposition:C2} and entails that~$\overline{\cH}=\cC^{\cN}(\cX)$.
    Indeed, for all~$h\in\cC^{\cN}(\cX)$ there is a sequence~$(h_k)_{k\in\bN}$ in~$\cH$ that converges to~$h$ in supremum norm. Moreover, for all~$\partial\in\mathscr{D}$ and~$\omega\in\cX$, $\partial h_k(\omega)=\langle h_k, \partial\kappa(\omega,\cdot)\rangle_\cH$ hence~$\partial h_k$ also converges in  supremum norm towards~$\partial h$.
    
    We now need to show that $u_\infty$ solves the PPDE \eqref{eq:mainPDE}. 
Let us define $v := \mathcal{L}_t u_\infty $ and $v_{k} := \mathcal{L}_t u_{m_k,n_k}$ for all $k\in\bN$. For any $\omega=(t,x,\gamma)\in\cX^\circ$, by the triangle inequality and because~$v_{k}(\omega^i)=g(\omega^i)$ for any $1\leq i \leq m_k$, we have
\begin{align}\label{eq:convergence_triangle}
    |v(\omega)-g(\omega)| &\leq \min_{1\leq i \leq m_k} \Big\{\abs{v(\omega)-v(\omega^i)} + \abs{v(\omega^i) - v_{k}(\omega^i)} + \abs{g(\omega^i)-g(\omega)} \Big\}.
\end{align}
Recall that $v$ and $g$ are uniformly continuous over the compact set $\cX$ and, as $k$ goes to infinity, $(\omega^i)_{1\le i \le m_k}$ form a dense family of $\cX^\circ$. Therefore, for all $\ep>0$ there exists $M\in\bN$ such that, if~$m_k\ge M$, we have
$$
\min_{1\leq i \leq m_k} \Big\{\abs{v(\omega)-v(\omega^i)} + \abs{g(\omega^i)-g(\omega)} \Big\} \le \ep.
$$
In addition, $v_{k}$ converges uniformly to $v$ because $u_{m_k,n_k}$ converges to $u_\infty$ in $\cC^{\cN}$. Since~$\ep>0$ was arbitrary, 
Equation \eqref{eq:convergence_triangle} thus entails that $v(\omega)=g(\omega)$.
Following a similar argument it can be shown that $u_\infty(\omega)=\phi(\omega)$ for all $\omega\in\partial\cX$. 

In conclusion, $u_\infty$ is a solution of the PPDE \eqref{eq:mainPDE}. Under condition (i) (respectively (ii)), it belongs to~$\cC^{\cN}$ (respectively $\cH$) and is the unique classical solution in this space, implying that~$u^\star=u_\infty$. Since every convergent subsequence $(u_{m,n})_{m,n\in\bN}$ converges to the same limit $u^\star$, the whole sequence also converges to $u^\star$ in the $\cC^{\cN}$ topology.
\end{proof}

%


\section{Numerical experiments}\label{sec:numerics}

In this section, we present numerical experiments benchmarking our signature kernel PPDE solver formulated as the optimal recovery problem~\eqref{eq:OptiRecovery}, whose exact solution is given by~\eqref{eqn:exact_sol} with coefficients obtained by solving the linear systems~\eqref{eqn:optim_system}--\eqref{eqn:linear_system} against either analytic solutions (when available) or classical Monte Carlo solvers. We consider two examples mentioned in subsection~\ref{subsec:examples}, namely the path-dependent heat equation~\eqref{eqn:PPDEfBM}) and the rough Bergomi PPDE~\eqref{eq:PPDE_rvol}. For both experiments, we consider two types of errors between the predicted prices and true prices, namely the mean squared error (MSE), and the mean absolute error (MAE). The optimal kernel hyperparameters are determined by cross-validation. We will conclude the section with a discussion to compare our kernel approach  with recent neural networks techniques for solving PPDEs. Our code is available at \url{https://github.com/crispitagorico/sigppde}.

\subsection{Fractional Brownian motion}\label{sec:fbm}

In this first example presented in Section~\ref{subsec:example_fbm}, we consider a one dimensional fractional Brownian motion $(\Wh_t=\int_0^t K(t,r)\D W_r)_{t\in\bT}$ with Hurst exponent $H\in(0,\half)$. We recall that~$\Theta^t_s=\bE[\Wh_s|\cF_t]$ for~$s\ge t$. By \cite[Theorem 4.1]{viens2019martingale}, under appropriate regularity assumptions on the functions $\phi,f$, the conditional expectation
$$
\bE\left[\phi(\Wh_T)+\int_t^T f(s,\Wh_s)\ds \lvert \cF_t\right] =: u(t,\Theta^t)
$$
is realised as the solution of the path-dependent heat equation 
$$
\partial_t u(t,\gamma) + \langle \partial^2_\gamma u(t,\gamma),(K^t,K^t)\rangle +f(t,\gamma)=0, \qquad u(T,\gamma) = \phi(\gamma_T).
$$
In our experiments we choose $f\equiv 0$ and consider the following three instances of function $\phi$, for which analytic expressions of the conditional expectation are available:
\begin{enumerate}
    \item $\phi(x)=x, \quad\quad\quad \quad \ \bE[\phi(\Wh_T)|\cF_t] = \Theta_T^t$;
    \item $\phi(x)=\E^{\nu x},\quad\quad \quad \ \  \bE[\phi(\Wh_T)|\cF_t]=\exp\left(\nu\Theta_T^t + \frac{\nu^2 (T-t)^{2H}}{2} \right)$;
    \item $\phi(x) = (x-K)_+, \quad \bE[\phi(\Wh_T)|\cF_t]=\frac{(T-t)^H}{\sqrt{2\pi}} \exp\left(-\frac{(K-\Theta^t_T)^2}{2(T-t)^{2H}}\right) - (K-\Theta^t_T) \vartheta\left(\frac{\Theta^t_T-K}{(T-t)^H}\right)$,
\end{enumerate}
where $\nu, K \in \bR$ and $\vartheta$ is the standard normal cumulative distribution function. 

Because analytic prices are available, we limit ourselves to assess the performance of our kernel algorithm to recover these true prices. Collocation points for the kernel method were chosen uniformly on $[0,1]$ for the time variable and sampled from the process $\Theta^t$ for the path variable. 

\begin{figure}[ht]
  \begin{minipage}[t]{0.33\linewidth}
    \centering
    \includegraphics[scale=0.33]{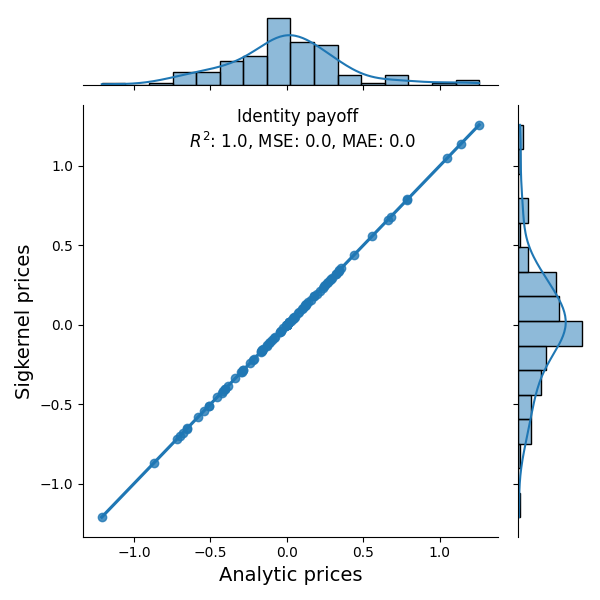}
  \end{minipage}%
  \begin{minipage}[t]{0.33\linewidth}
    \centering
    \includegraphics[scale=0.33]{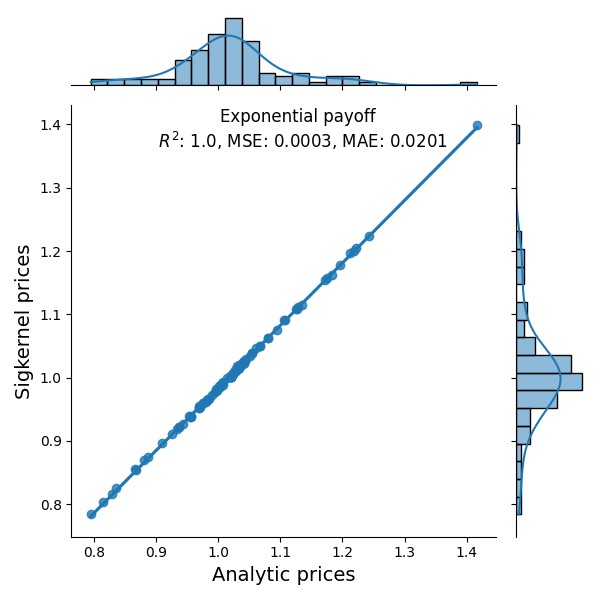}
  \end{minipage}
  \begin{minipage}[t]{0.33\linewidth}
    \centering
    \includegraphics[scale=0.33]{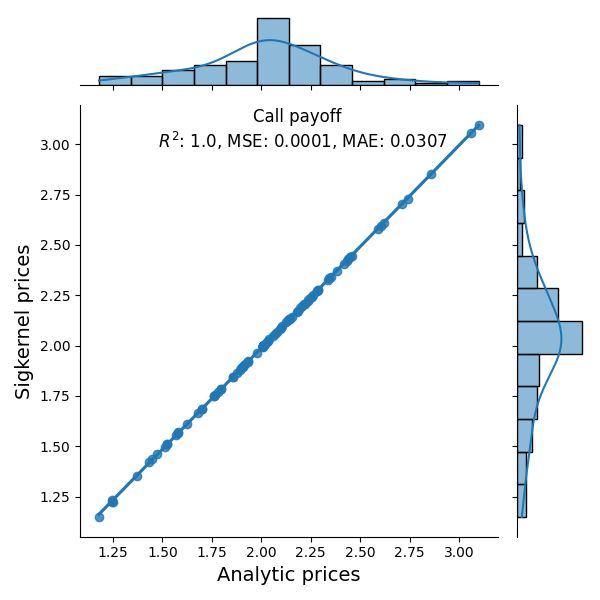}
  \end{minipage}
 \caption{Analytic prices against Signature Kernel prices with $400$ collocation points.}
 \label{fig:fbm}
\end{figure}

As it can be observed from \Cref{fig:fbm}, our kernel approach is capable to recovering with a high level of accuracy the prices for all considered payoff profiles.


\subsection{Rough Bergomi}\label{sec:rBergomi}

In this section, we make use of the following formulation of the rough Bergomi model introduced by \cite{bayer2016pricing} and already written down in the introduction
$$X_t := \int_0^t \sqrt{ \psi_r(\Wh_r) } \mathrm{d}B_r - \frac{1}{2}\int_0^t  \psi_r(\Wh_r) \mathrm{d}r,\quad B_r:=\rho W_r^1 + \sqrt{1 - \rho^2}W_r^2, $$
$$
\psi_t(x) := \xi \exp \left\{ \eta x - \frac{\eta^2}{ 2} t^{2H}\right\}, \quad \Wh^H_t := \sqrt{2H} \int_0^t (t - u)^{H-\half}  \mathrm{d}W^1_r,$$
for independent  Brownian motions $(W^1, W^2)$. The \emph{hybrid scheme} of \cite{bennedsen2017hybrid} is used for efficient, $\mathcal{O}(\ell\log \ell)$, simulation of the Volterra process~$\Wh$ where $\ell$ is the length of the simulated paths. For the experiments we take the same parameters as in the repository\footnote{\url{https://github.com/ryanmccrickerd/rough_bergomi/tree/master}.}, namely $T=1$, $\xi=0.055$, $\eta=1.9$, $\rho =-0.9$. We set the ground truth to be the Monte Carlo prices obtained with $100000$ sample paths. We recall that the  value function~$u$ defined in subsection~\ref{subsec:example_rBergomi} solves the PPDE
\begin{equation}
\left\{
\begin{array}{rl}
     &\partial_t u + \frac{1}{2}\psi_t(\gamma_t) (\partial_{x}^2-\partial_x) u  + \half  \tpar_{\gamma}^{K^t\,K^t} u+ \rho \sqrt{\psi_t(\gamma_t)}  \tpar_{\gamma}^{K^t} (\partial_x u) =0,\\
     &u(T,x,\gamma)=\phi(x).
\end{array}
\right.
\end{equation}
To evaluate the conditional expectation in the Monte Carlo benchmark we remark that, setting $$
I^t_s := \int_t^s K(s,r)\D W_r, \quad\text{where  } K(s,r)=\sqrt{2H}(s-r)^{H-\half}
$$ if $s>t$ and $0$ otherwise, the value function~$u$ has the following probabilistic representation
\begin{equation}
    u(t,x,\gamma) =  \mathbb E \left[f\left( x + \int_t^T \sqrt{\psi_s(\gamma_s + I^t_s)} \D B_s - \half \int_t^T \psi_s(\gamma_s + I^t_s)\ds\right)\right]
\end{equation}
since $\Theta^t$ is an $\cF_t$ measurable process and $I^t$ is independent from $\cF_t$.
Collocation points for the kernel method were sampled uniformly at random on $[0,1]$ for the time variable, uniformly at random on $[x_{min}, x_{max}]$ for the price variables, where $x_{min}, x_{max}$ were chosen using ground truth prices, and sampled from the process $\Theta^t$ for the path variable.


\begin{table}[ht]
\centering
\caption*{$H=0.1$ -- Strike $= 0.1$}
\begin{tabular}{c ccc ccc}
\toprule
\multirow{2}{*}{} 
        Model & \multicolumn{3}{c}{Monte Carlo} & \multicolumn{3}{c}{SigPPDE}   \\
    \cmidrule(lr){2-4} \cmidrule(lr){5-7}
        Complexity & 20 paths  & 100 paths  & 500 paths  & 20 c. pts  & 100 c. pts  & 500 c. pts          \\
    \midrule
\multirow{1}{*}{MSE}  
        & 0.2070 & 0.0079 & 0.0080 & 0.2084 & 0.0016 & 0.0012  \\       
    \addlinespace
\multirow{1}{*}{MAE}
        & 1.3193 & 0.2683 & 0.2300 & 0.6375 & 0.0714 & 0.0672            \\
    \bottomrule
\end{tabular}
\end{table}

\begin{table}[ht]
\centering
\caption*{$H=0.1$ -- Strike $= 1.0$}
\begin{tabular}{c ccc ccc}
\toprule
\multirow{2}{*}{} 
        Model & \multicolumn{3}{c}{Monte Carlo} & \multicolumn{3}{c}{SigPPDE}   \\
    \cmidrule(lr){2-4} \cmidrule(lr){5-7}
        Complexity & 20 paths  & 100 paths  & 500 paths  & 20 c. pts  & 100 c. pts  & 500 c. pts          \\
    \midrule
\multirow{1}{*}{MSE}  
        & 0.0324 & 0.0013  & 0.0001  & 0.1576 & 0.0092  & 0.0008  \\       
    \addlinespace
\multirow{1}{*}{MAE}
        & 0.0679 & 0.0557 & 0.0131 & 0.7998  & 0.2671  & 0.0938           \\
    \bottomrule
\end{tabular}
\end{table}

\begin{table}[ht]
\centering
\caption*{$H=0.3$ -- Strike $= 0.1$}
\begin{tabular}{c ccc ccc}
\toprule
\multirow{2}{*}{} 
        Model & \multicolumn{3}{c}{Monte Carlo} & \multicolumn{3}{c}{SigPPDE}   \\
    \cmidrule(lr){2-4} \cmidrule(lr){5-7}
        Complexity & 20 paths  & 100 paths  & 500 paths  & 20 c. pts  & 100 c. pts  & 500 c. pts          \\
    \midrule
\multirow{1}{*}{MSE}  
        & 0.0198 & 0.0013 & 0.0005  & 0.1198 & 0.0026 & 0.0002  \\       
    \addlinespace
\multirow{1}{*}{MAE}
        & 0.3330 & 0.0718 & 0.0624 & 0.6344  & 0.0973  & 0.0284           \\
    \bottomrule
\end{tabular}
\end{table}

\begin{table}[ht]
\centering
\caption*{$H=0.3$ -- Strike $= 1.0$}
\begin{tabular}{c ccc ccc}
\toprule
\multirow{2}{*}{} 
        Model & \multicolumn{3}{c}{Monte Carlo} & \multicolumn{3}{c}{SigPPDE}   \\
    \cmidrule(lr){2-4} \cmidrule(lr){5-7}
        Complexity & 20 paths  & 100 paths  & 500 paths  & 20 c. pts  & 100 c. pts  & 500 c. pts          \\
    \midrule
\multirow{1}{*}{MSE}  
        & 0.0113 & 0.0015 & 0.0006  & 0.4201 & 0.0046 & 0.0014  \\       
    \addlinespace
\multirow{1}{*}{MAE}
        & 0.2341 & 0.0945 & 0.0679 & 1.1832  & 0.1050  & 0.0905           \\
    \bottomrule
\end{tabular}
\end{table}



We consider two values of $H=0.1$ and $H=0.3$ and two strike values $K=0.1$ and $K=1.0$. We take the same number of sample paths and collocation points for Monte Carlo and the signature kernel solver respectively, and consider three different increasing values, namely $20$, $100$ and $500$. As it can be observed from the tables, the numerical performance in terms of MSE and MAE of the two algorithms are comparable across the different settings. As shown in the tables above, the relative performance depends on the roughness of the input paths (governed by the Hurst parameter $H$) and on the strike of the option. Hence, the comparison is not one-sided. 

The time complexity of a Monte Carlo scheme scales roughly as $\mathcal{O}(h^{-2} N d^2)$, where $N$ is the number of sample paths, $h$ is the discretisation step and $d$ is the dimension of the process. This complexity can further reduced to $\mathcal{O}((h\log(h))^{-1} N d^2)$ using schemes such as the one proposed by \cite{bennedsen2017hybrid}. The complexity for evaluating our solver (once it has been trained offline) scales as $\mathcal{O}(h^{-2} \hat{N}d^2)$ where $\hat{N}$ is the number of collocation points. This cost can be further reduced to $\mathcal{O}(h^{-1} \hat{N} d^2)$ if the operations are carried out on a GPU, see \cite{salvi2021signature} for additional details. It is clear that a transparent comparison of complexities between our approach and Monte Carlo for pricing under rough volatility can only be achieved by obtaining error rates for the convergence of the approximations governing the values of $N$ and $\hat N$ respectively in the above complexities, which we intend to explore as future work.

Nevertheless, the PDE approach provides by design advantages over Monte Carlo methods. Indeed, finite-dimension greeks are prone to instability, let alone pathwise ones, while all derivatives of the PPDE solution appear from (the linear combination of) standard PDEs.

\subsection{Comparison with neural PDE solvers}\label{sec:nn_comparison}

As anticipated, we will conclude this section with some remarks aimed at comparing our kernel approach to the numerous neural network techniques to solve PPDE which have been proposed in the literature in recent years. Neural PDE solvers such as DGM \cite{sirignano2018dgm} and PINNs \cite{raissi2019physics} essentially parameterise the solution of the PDE as a neural network which is then trained using the PDE and its boundary conditions as loss function. One drawback of these approaches in the path-dependent setting \cite{sabate2020solving, saporito2021path, jacquier2023deep} is that they require a time-discretisation of the solution. The time grids over which solutions and derivatives are evaluated during training and testing need to agree, usually followed by an arbitrary interpolation. On the contrary, our kernel approach operates directly on continuous paths, and it is therefore mesh-free. Furthermore, derivatives can be accessed directly by differentiating kernels, with no need to prematurely discretise in time. We note that concomitant to this paper is \cite{fang2023neural}, which uses a \emph{neural rough differential equation} (Neural RDE) model \cite{morrill2021neural, salvi2022neural} to parameterise the solution of a PPDE and is therefore also mesh-free. Another drawback of neural PDE solvers is that their theoretical analysis is often limited to density or universal approximation results; showing the existence of a network of a requisite size achieving a certain error rate, without guarantees whether this network is computable in practice. In fact the optimisation, done with variants of stochastic gradient descent, is never convex so there is no guarantee to attain a global minimum. Our kernel approach boils down to a convex finite-dimensional optimisation,  guaranteed to attain the unique global minimum, and that is consistent with the original problem when the number of collocation points is sent to infinity.


\section{Outlook}\label{sec:outlook}
The numerical solver presented in this paper competes with Monte Carlo methods in terms of the balance between complexity and accuracy. Our next objective will be to derive error estimates with respect to the number of collocation points in order to carry out a fair comparison. This step is essential for choosing the collocation points in an optimal way and better assess the efficiency of the algorithm. For this purpose we may follow the approach of~\cite{batlle2023error} or decide to regularise our optimisation problem as in~\cite{christmann2007consistency}. 
Further extensions include PPDEs arising from general Volterra processes, non-linear PPDEs, and path-dependent payoffs as the VIX example presented in Subsection~\ref{subsubsec:VIX}. Finally, the inverse problem is particularly relevant for financial applications. We expect that the calibration of model parameters can be considerably sped up if the kernel is enhanced with this additional data.



 
\appendix

\section{Proofs of signature and signature kernel estimates}\label{sec:appendix}
\label{appendix:Proofs}


\subsection{Differentiability and continuity estimates for the signature}\label{sec:ProofLemmaSig}
For the sake of uncluttering notations, in this section we fix~$t\in\bT$ and we will write~$\abs{\gamma}_1$ instead of~$\abs{\gamma}_{1-var;[t,T]}$ and~$\norm{\gamma}_0$ instead of~$\norm{\gamma}_{0;[t,T]}$ for every $\gamma\in\BV_t$. 
We present estimates for the signature and its derivatives, in passing obtaining CDEs satisfied by the latter, and showing continuity with respect to the $\cC_0$-norm as claimed in Proposition~\ref{prop:cty_signature}.
The following lemma is inspiredfrom~\cite{friz2010multidimensional}, Theorems 3.15 and 4.4.
\begin{lemma}\label{lemma:bound_sig_appendix}
Let Assumption~\ref{assu:statickernel} hold for~$g$.
    For all~$\gamma,\gammab,\eta,\etab\in \BV_t$, the signature is two times differentiable. Morever, there exist continuous functions $\overline{S_k}:\bR_+^k\to\bR_+$ and~$\overline{DS_k}:\bR_+^k\to\bR_+$, for~$k=1,2,3$, such that the following estimates hold
    \begin{alignat}{1}
        &\sup_{t'\in[t,T]}\norm{S(\gamma^g)_{t'}}_{\overline{T(\cH_g)}} 
        \le  \overline{S_0}(\abs{\gamma}_1); \label{eq:bound_sig_S0}\\
        &\sup_{t'\in[t,T]}\norm{ S(\gamma^g)_{t'}-S(\bar{\gamma}^g)_{t'}}_{\overline{T(\cH_g)}} 
\le \norm{\gamma-\bar{\gamma}}_{0} \overline{DS_0}(\abs{\gamma}_{1}, \abs{\bar\gamma}_{1}); \label{eq:bound_sig_DS0}\\
       & \sup_{t'\in[t,T]}
\norm{\partial^\eta_\gamma S(\gamma^g)_{t'}}_{\overline{T(\cH_g)}} 
\le  \overline{S_1}(\abs{\gamma}_1,\norm{\eta}_1); \label{eq:bound_sig_S1}\\
& \sup_{t'\in[t,T]}\norm{\partial^\eta_\gamma S(\gamma^g)_{t'}-\partial^\eta_\gamma S(\bar{\gamma}^g)_{t'}}_{\overline{T(\cH_g)}} 
\le \norm{\gamma-\bar{\gamma}}_{0} \overline{DS_1}(\abs{\gamma}_{1}, \abs{\bar\gamma}_{1}, \norm{\eta}_{1}); \label{eq:bound_sig_DS1}\\
&\sup_{t'\in[t,T]}     \norm{\partial^{\eta\etab}_\gamma S(\gamma^g)_{t'} }_{\overline{T(\cH_g)}} 
    \le \overline{S_2}(\abs{\gamma}_1,\norm{\eta}_1,\norm{\etab}_1);\label{eq:bound_sig_S2}\\
    &\sup_{t'\in[t,T]}\norm{\partial^{\eta\etab}_\gamma S(\gamma^g)_{t'}-\partial^{\eta\etab}_\gamma S(\bar{\gamma}^g)_{t'}}_{\overline{T(\cH_g)}} 
\le \norm{\gamma-\bar{\gamma}}_{0} \overline{DS_2}(\abs{\gamma}_{1}, \abs{\bar\gamma}_{1}, \norm{\eta}_{1}, \norm{\etab}_{1}). \label{eq:bound_sig_DS2}
\end{alignat}
In particular, $\partial^\eta_\gamma S\circ G$ and
 $\partial^{\eta\etab}_\gamma S\circ G$ are Fréchet derivatives.
\end{lemma}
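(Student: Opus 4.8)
The plan is to establish the signature estimates by exploiting the fact that $S(\gamma^g)$ solves the linear CDE $\D Y_s = Y_s \cdot \D \gamma^g_s$ started at $\mathbf 1$ (Lemma~\ref{lemma:sig_cde}), and to derive analogous CDEs for the first and second directional derivatives by formally differentiating this equation in $\gamma$. Concretely, writing $\gamma^g$ for the $\cH_g$-lifted path, I would first recall from the chain rule~\eqref{eq:chain_rule_dg} that $\D\gamma^g_u = \sum_{i=1}^e \partial_{x^i} g(\gamma_u,\cdot)\,\D\gamma^i_u$, so that Assumption~\ref{assu:statickernel} gives a uniform bound $\abs{\gamma^g}_{1-var;[t,T]} \le \overline{g_{01}}\,\abs{\gamma}_{1-var;[t,T]}$ and similarly for the derivative paths $\partial^\eta_\gamma G(\gamma)$ and $\partial^{\eta\etab}_\gamma G(\gamma)$, whose $1$-variation norms are controlled by products of $\overline{g_{nm}}$'s with $\abs{\gamma}_1,\norm{\eta}_1,\norm{\etab}_1$. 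The bound~\eqref{eq:bound_sig_S0} then follows from the standard factorial decay estimate $\norm{S(x)^{(n)}}\le \abs{x}_{1-var}^n/n!$ for BV paths in a Hilbert space, summed to give $\overline{S_0}(r) = \exp(\overline{g_{01}} r)$ up to constants; this is exactly the Hilbert-space analogue of \cite[Theorem 3.15]{friz2010multidimensional}.

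Next I would handle the derivative estimates. Differentiating $\D Y_s = Y_s\cdot \D\gamma^g_s$ with respect to $\gamma$ in the direction $\eta$ yields the inhomogeneous linear CDE
\begin{equation*}
\D Z_s = Z_s \cdot \D\gamma^g_s + Y_s \cdot \D\partial^\eta_\gamma G(\gamma)_s, \qquad Z_t = 0,
\end{equation*}
for $Z_s = \partial^\eta_\gamma S(\gamma^g)_{t,s}$ — this is equation~\eqref{eq:PDE_dersig} referred to in Proposition~\ref{prop:cty_signature}. By variation of constants (valid in $\overline{T(\cH_g)}$ since the homogeneous flow is invertible, its inverse being $S(-\overleftarrow{\gamma^g})$ or obtained directly), $Z$ is expressed as an integral of $Y$ against $\D\partial^\eta_\gamma G(\gamma)$ conjugated by signatures, and the factorial bounds on each signature factor give~\eqref{eq:bound_sig_S1} with $\overline{S_1}$ polynomial in its arguments times an exponential. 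Differentiating once more in the direction $\etab$ produces the second-order CDE~\eqref{eq:PDE_dertwosig} with forcing terms involving $\partial^\etab_\gamma S(\gamma^g)$, $\partial^\eta_\gamma S(\gamma^g)$, $\partial^\eta_\gamma G(\gamma)$, $\partial^\etab_\gamma G(\gamma)$ and $\partial^{\eta\etab}_\gamma G(\gamma)$; the same variation-of-constants argument, now using the already-established bounds~\eqref{eq:bound_sig_S0} and~\eqref{eq:bound_sig_S1}, yields~\eqref{eq:bound_sig_S2}. That these Gateaux derivatives are in fact Fréchet derivatives follows because the error terms in the difference quotients are $o(\norm{\eta}_{1-var})$ uniformly, a consequence of the local Lipschitz estimates below applied to the perturbed paths.

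For the Lipschitz-in-$\gamma$ estimates~\eqref{eq:bound_sig_DS0}, \eqref{eq:bound_sig_DS1}, \eqref{eq:bound_sig_DS2}, the idea is to subtract the defining CDEs for $\gamma$ and $\bar\gamma$ and apply a Grönwall-type argument in $\overline{T(\cH_g)}$. Here the crucial point — and the main obstacle — is that one wants the difference bounded by $\norm{\gamma-\bar\gamma}_{0;[t,T]}$, the \emph{supremum} norm, rather than the $1$-variation norm of the difference; this requires an integration-by-parts (Young/Loève–Young) manoeuvre on the iterated integrals to trade a $\D(\gamma-\bar\gamma)$ factor for a $(\gamma-\bar\gamma)$ factor at the cost of boundary terms and an extra $\D\gamma$ or $\D\bar\gamma$, exactly as in \cite[Theorem 3.15]{friz2010multidimensional}, while keeping careful track that the static-kernel lift only contributes bounded multiplicative constants via Assumption~\ref{assu:statickernel}. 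Once this is done for~\eqref{eq:bound_sig_DS0}, the estimates~\eqref{eq:bound_sig_DS1} and~\eqref{eq:bound_sig_DS2} are obtained by the same subtraction-and-Grönwall scheme applied to the derivative CDEs, feeding in the previously proven sup-norm Lipschitz bounds for the lower-order signatures and for the paths $G(\gamma)-G(\bar\gamma)$, $\partial^\eta_\gamma G(\gamma)-\partial^\eta_\gamma G(\bar\gamma)$, etc., each of which is controlled by $\norm{\gamma-\bar\gamma}_0$ times a continuous function of the $1$-variation norms using the Lipschitz continuity of the derivatives of $g$. Continuity of $S\circ G$ and its derivatives with respect to the $\cC_0$-norm is then immediate from~\eqref{eq:bound_sig_DS0}–\eqref{eq:bound_sig_DS2}, since the right-hand sides are continuous in the $1$-variation norms, which are bounded on the relevant sets, and vanish as $\norm{\gamma-\bar\gamma}_0\to 0$.
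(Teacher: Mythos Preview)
Your plan is essentially the same as the paper's: the same CDE for $S(\gamma^g)$, the same derived inhomogeneous CDEs~\eqref{eq:PDE_dersig} and~\eqref{eq:PDE_dertwosig} for the directional derivatives, Gr\"onwall-type bounds, and the Friz--Victoir Theorem~3.15 machinery (including the integration-by-parts trick) to extract the $\norm{\gamma-\bar\gamma}_0$ factor in the Lipschitz estimates.

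Two small points. First, the paper interleaves the estimates rather than grouping all the boundedness bounds before all the Lipschitz bounds: it proves~\eqref{eq:bound_sig_DS0} \emph{before} deriving the CDE~\eqref{eq:PDE_dersig} for $\partial^\eta_\gamma S(\gamma^g)$, because that Lipschitz bound is exactly what justifies the dominated-convergence step needed to pass the limit $\ep\to 0$ inside the integral and obtain the derivative CDE rigorously (not just formally); the same dependency recurs at second order. Your ``formally differentiate, justify later'' scheme is fine as a plan, but be aware the logical order is Lipschitz-then-derivative at each level. Second, two cosmetic corrections: the constant controlling $\abs{\gamma^g}_{1}$ is $\overline{g_{11}}$, not $\overline{g_{01}}$, since $\norm{\partial_{x^i} g(x,\cdot)}_{\cH_g}^2 = \partial^2_{x^i y^i} g(x,x)$ by the reproducing property~\eqref{eq:reproducing_dg}; and you do not need variation of constants or flow inversion---the paper simply applies Gr\"onwall (\cite[Lemma~3.2]{friz2010multidimensional}) directly to the integral inequality coming from the inhomogeneous CDE, which is cleaner.
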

\begin{proof}
Let~$0\le t\le t'\le T$ and $\gamma,\gammab,\eta,\etab\in \BV_t$.

\textbf{1)} As in Lemma~\ref{lemma:sig_cde}, the signature on lifted paths satisfies the CDE
\begin{align*}
    S(\gamma^g)_{t'} = \bm{1}+ \int_t^{t'} S(\gamma)_u \cdot \D \gamma_u^g = \bm{1}+ \int_t^{t'}  S(\gamma)_u \cdot \nabla g(\gamma_u,\cdot) \D \gamma_u,
\end{align*}
hence we have the bound
\begin{align}\label{eq:Bound_SigEq}
\norm{S(\gamma^g)_{t'}}_{\overline{T(\cH_g)}} 
&\le
 \bm{1}+ \int_t^{t'} \norm{S(\gamma)_u}_{\overline{T(\cH_g)}}  \cdot \norm{\nabla g(\gamma_u,\cdot)\D \gamma_u}_{\overline{T(\cH_g)}} \\
 &\le\bm{1}+ \int_t^{t'} \norm{S(\gamma)_u}_{\overline{T(\cH_g)}}   \norm{\nabla g(\gamma_u,\cdot)}_{\cH_g} \abs{\D \gamma_u},    \nonumber
\end{align}
where $\norm{\nabla g(\gamma_u,\cdot)}_{\cH_g}=\sum_{i,j=1}^e \partial^2_{x^i y^j} g(\gamma_u,\gamma_u) \le  \overline{g_{11}}$ and~$\abs{\D\gamma_u}=\abs{\dot\gamma_u}\du$. By Grönwall's lemma \cite[Lemma 3.2]{friz2010multidimensional} this yields
$$
\sup_{t'\in[t,T]}\norm{S(\gamma^g)_{t'}}_{\overline{T(\cH_g)}} \le \E^{  \overline{g_{11}} \abs{\gamma}_{1}} =: \overline{S_0}(\gamma).
$$
\textbf{2)} Furthermore, \cite[Proposition 2.9]{friz2010multidimensional} gives the $1$-variation bound
$$
\abs{\norm{S(\gamma^g)_{\cdot}}_{\overline{T(\cH_g)}}}_{1}
\le \overline{S_0}(\gamma)  \overline{g_{11}} \abs{\gamma}_{1}.
$$
For any~$\bar{\gamma}\in\BV_t$, imitating the proof of~\cite[Theorem 3.15]{friz2010multidimensional} yields
\begin{align*}
    \norm{S(\gamma^g)_{t'}-S(\bar{\gamma}^g)_{t'}}_{\overline{T(\cH_g)}} 
&\le \int_t^{t'} \norm{S(\gamma^g)_{u}-S(\bar{\gamma}^g)_{u}}_{\overline{T(\cH_g)}} \abs{\D \gamma^g_u} + \norm{ \int_t^{t'} S(\gammab^g)\D(\gamma^g-\gammab^g)_u}_{\overline{T(\cH_g)}} \\
&\le \int_t^{t'} \norm{S(\gamma^g)_{u}-S(\bar{\gamma}^g)_{u}}_{\overline{T(\cH_g)}}   \overline{g_{11}}\abs{\D \gamma_u} + \norm{\norm{\gamma^g-\gammab^g}_{\cH_g}}_0 \left(1+\abs{\norm{S(\gammab^g)_{\cdot}}_{\overline{T(\cH_g)}}}_{1}\right)
\end{align*}
Thus, applying the inequality~$\norm{\norm{\gamma^g-\gammab^g}_{\cH_g}}_0 \le \overline{g_{11}} \norm{\gamma-\gammab}_0 $ and Grönwall's lemma we obtain
$$
\sup_{t'\in[t,T]}\norm{S(\gamma^g)_{t'}-S(\bar{\gamma}^g)_{t'}}_{\overline{T(\cH_g)}} 
\le \norm{\gamma-\bar{\gamma}}_{0}\overline{g_{11}}\big(1+\overline{S_0}(\gammab)  \overline{g_{11}} \abs{\gammab}_{1}\big)
\E^{ \overline{g_{11}} \abs{\gamma}_{1}}
=:  \norm{\gamma-\bar{\gamma}}_{0} \overline{DS_0}(\abs{\gamma}_1,\abs{\gammab}_1),
$$
where~$\overline{DS_0}:\bR^2\to\bR$ is a continuous function.

\textbf{3)} 
We continue with the following observation. For all~$u\in[t,T]$ we have
\begin{align}
     \lim_{\ep \to 0}\frac{1}{\ep} \D (\gamma + \ep \eta)^g_u - \gamma^g_u)
    &= \lim_{\ep \to 0}\frac{1}{\ep} 
    \sum_{i=1}^e \Big\{\partial_i g(\gamma_u+\ep \eta_u,\cdot)\D(\gamma_u^i+\ep\eta_u^i) - \partial_i g(\gamma_u,\cdot)\D\gamma_u^i\Big\} \nonumber\\
    &=\lim_{\ep \to 0}\frac{1}{\ep} 
    \sum_{i=1}^e  \big\{\partial_i g(\gamma_u+\ep \eta_u,\cdot)- \partial_i g(\gamma_u,\cdot) \big\}\D\gamma_u^i + \lim_{\ep \to 0}
    \sum_{i=1}^e \partial_i g(\gamma_u+\ep\eta_u,\cdot)\D\eta^i_u \nonumber\\
    &= \sum_{i=1}^e \bigg\{ \partial^\eta_\gamma \partial_{x^i} g(\gamma_u,\cdot) \D \gamma^i_u + \partial_{x^i}  g(\gamma_u,\cdot)  \D \eta_u^i \bigg\} \nonumber\\
    &= \D \left\{ \sum_{i=1}^e\partial_{x^i} g(\gamma_u,\cdot) \eta^i_u\right\} \nonumber \\
    &= \D \Big\{ \partial^\eta_\gamma G(\gamma)_u\Big\}, \label{eqn:dgamma_dG}
\end{align}
where we recall that~$G(\gamma)_t=g(\gamma_t,\cdot)$.
Further, we note that, for all~$\ep\in(0,1)$, 
$$
\frac{1}{\ep}\sup_{t'\in[t,T]}\norm{S(\gamma^g)_{t'}-S((\gamma+\ep\eta)^g)_{t'}}_{\overline{T(\cH_g)}} 
\le \norm{\eta}_0 \overline{DS_0}(\abs{\gamma}_1,\abs{\gamma}_1+\abs{\eta}_1).
$$
Hence, by dominated convergence and \eqref{eqn:dgamma_dG}, we get
\begin{align}
    \partial^\eta_\gamma S(\gamma^g)_{t'} &=\lim_{\ep\to0} \frac{1}{\ep} \big(S((\gamma+\ep\eta)^g)_{t'}-S(\gamma^g)_{t'} \big) \nonumber\\
    & = \int_t^{t'} \lim_{\ep\to0} \frac{1}{\ep}\big(S((\gamma+\ep\eta)^g)_u-S(\gamma^g)_u\big) \cdot \D\gamma_u^g  + \int_t^{t'} \lim_{\ep\to0} \frac{1}{\ep} S(\gamma^g)_u \cdot \D ((\gamma+\ep\eta)^g)_u-\gamma^g_u)\nonumber \\
    &= \int_t^{t'} \partial^\eta_\gamma S(\gamma^g)_u \cdot  \D\gamma^g_u
    + \int_t^{t'}  S(\gamma^g)_u \cdot \D \left\{\partial^\eta_\gamma G(\gamma)_u\right\}.
    \label{eq:PDE_dersig}
\end{align}
Note that $\norm{\partial_{x^{ij}}^2g(\gamma_u,\cdot)}_{\cH_g} = \partial_{x^{ij}y^{ij}}^4g(\gamma_u,\gamma_u)$, and hence 
\begin{align}
\int_t^{t'}  \norm{\D \left\{\partial^\eta_\gamma G(\gamma)_u\right\} }_{\cH_g}
&\le 
\int_t^{t'} \sum_{i,j=1}^e \norm{\partial_{x^{ij}}g(\gamma_u,\cdot)}_{\cH_g} \abs{\eta_u} \abs{\D \gamma_u} + \int_t^{t'} \sum_{i=1}^e \norm{\partial_{x^{i}}g(\gamma_u,\cdot)}_{\cH_g} \abs{\D \eta_u}  \nonumber\\
& \le \overline{g_{22}} \norm{\eta}_0 \abs{\gamma}_{1} + \overline{g_{11}} \abs{\eta}_{1}\le\overline{G_1}(\abs{\gamma}_1,\norm{\eta}_1), 
\label{eq:BG1}
\end{align}
where $\overline{G_1}:\bR_+^2\to\bR_+$ is a continuous function.
Applying Grönwall's lemma as in the previous steps yields the estimate
$$
\norm{\partial^\eta_\gamma S(\gamma^g)_{t'}}_{\overline{T(\cH_g)}} 
\le \overline{S_0}(\abs{\gamma}_1)  \overline{G_1}(\abs{\gamma}_1,\norm{\eta}_1) \E^{ \overline{g_{11}} \abs{\gamma}_{1}}=: \overline{S_1}(\abs{\gamma}_1,\norm{\eta}_1).
$$
\textbf{4)} 
Noting that, for all~$t\le s'\le t'\le T$,
$$
\abs{\norm{\int_{t}^{\cdot}  S(\gamma^g)_u \cdot \D \left\{\partial^\eta_\gamma G(\gamma)_u\right\}}_{\overline{T(\cH_g)}}}_{1}
\le \overline{S_0}(\gamma) \abs{ \int_t^{\cdot}  \norm{\D \left\{\partial^\eta_\gamma G(\gamma)_u\right\} }_{\cH_g}}_1
\le \overline{S_0}(\gamma)  \overline{G_1}(\abs{\gamma}_1,\norm{\eta}_1),
$$
Proposition 2.9 of \cite{friz2010multidimensional} thus gives
\begin{align*}
\abs{\norm{\partial^\eta_\gamma S(\gamma^g)_{t'}}_{\overline{T(\cH_g)}} }_{1}
\le  \overline{S_1}(\abs{\gamma}_1,\norm{\eta}_1) \overline{g_{11}} \abs{\gamma}_{1} + \overline{S_0}(\gamma)\overline{G_1}(\abs{\gamma}_1,\norm{\eta}_1).
\end{align*}
Then, from the integration by parts formula we observe that
\begin{align*}
    \int_t^{t'} \abs{\eta_u}  \D (\abs{\gamma_u-\bar{\gamma}_u})
    = \int_t^{t'}  \abs{\gamma_u-\bar{\gamma}_u} \D \abs{\eta_u} + \abs{\eta_t} \abs{\gamma_t-\bar{\gamma}_t}
    \le \norm{\gamma-\bar{\gamma}}_0\abs{\eta}_{1} + \norm{\eta}_0 \norm{\gamma-\bar{\gamma}}_0.
\end{align*} 
This allows us to study the following regularity, leveraging the uniform bounds of~$g$'s derivatives:
\begin{align*}
    &\int_t^{t'} \norm{\D \left\{\partial^\eta_\gamma (G(\gamma)_u-G(\bar\gamma)_u)\right\} }_{\cH_g}
    \le \int_t^{t'} \sum_{i,j=1}^e \norm{\partial_{x^{ij}}(g(\gamma_u,\cdot)-g(\bar\gamma_u,\cdot))}_{\cH_g} \abs{\eta_u} \abs{\D \gamma_u}\\
    &+ \int_t^{t'} \sum_{i,j=1}^e \norm{\partial_{x^{ij}} g(\bar\gamma_u,\cdot)}_{\cH_g} \abs{\eta_u} \abs{\D (\gamma_u-\bar\gamma_u)}
    +\int_t^{t'} 
    \sum_{i=1}^e \norm{\partial_{x^{i}}(g(\gamma_u,\cdot)-g(\bar\gamma_u,\cdot))}_{\cH_g} \abs{\D \eta_u}  \\
    &\le \overline{g_{33}} \norm{\gamma-\bar{\gamma}}_0 \norm{\eta}_0 \abs{\gamma}_{1}+ \overline{g_{22}}\int_t^{t'} \abs{\eta_u}  \D (\abs{\gamma_u-\bar{\gamma}_u})
    + \overline{g_{22}} \norm{\gamma-\bar{\gamma}}_0 \abs{\eta}_{1}\\
    &\le \norm{\gamma-\gammab}_0 \overline{DG_1}(\abs{\gamma}_1,\abs{\gammab}_1,\norm{\eta}_1),
\end{align*} 
where $\overline{DG_1}:\bR^3_+\to\bR_+$ is a continuous function.
We deduce that the difference of the source terms in~\eqref{eq:PDE_dersig} has the following bound
\begin{align}\label{eq:sourceterm_dersig}
&\norm{\int_t^{t'}  S(\gamma^g)_u \cdot \D \left\{\partial^\eta_\gamma G(\gamma)_u\right\}
-\int_t^{t'}  S(\bar\gamma^g)_u \cdot \D \left\{\partial^\eta_\gamma G(\bar\gamma)_u\right\}}_{\overline{T(\cH_g)}} \\
&\le \sup_{t'\in[t,T]}\norm{S(\gamma^g)_{t'}-S(\bar{\gamma}^g)_{t'}}_{\overline{T(\cH_g)}} \overline{G_1}(\gamma,\eta) 
+  \sup_{t'\in[t,T]}\norm{S(\gamma^g)_{t'}}_{\overline{T(\cH_g)}}  \int_t^{t'}  \norm{\D \left\{\partial^\eta_\gamma (G(\gamma)_u-G(\bar\gamma)_u)\right\} }_{\cH_g}\nonumber\\
&\le \norm{\gamma-\bar{\gamma}}_0 \Big(\overline{DS_0}(\abs{\gamma}_{1}, \abs{\bar\gamma}_{1})\overline{G_1}(\abs{\gamma}_1,\norm{\eta}_1) +\overline{S_0}(\gamma)\overline{DG_1}(\abs{\gamma}_1,\abs{\gammab}_1,\norm{\eta}_1) \Big).\nonumber
\end{align}
In virtue of \eqref{eq:PDE_dersig} and \eqref{eq:sourceterm_dersig}, the same arguments as in the proof of \cite[ Theorem 3.15]{friz2010multidimensional} entails 
 there exists a continuous function~$\overline{DS_1}:\bR^3_+\to\bR_+$ such that
\begin{align*}
    \sup_{t'\in[t,T]}&\norm{\partial^\eta_\gamma S(\gamma^g)_{t'}-\partial^\eta_\gamma S(\bar{\gamma}^g)_{t'}}_{\overline{T(\cH_g)}}\\
    &\le \E^{2 \overline{g_{11}}(\abs{\gamma}_1\vee\abs{\gammab}_1)}\norm{\int_t^{t'}  S(\gamma^g)_u \cdot \D \left\{\partial^\eta_\gamma G(\gamma)_u\right\}
-\int_t^{t'}  S(\bar\gamma^g)_u \cdot \D \left\{\partial^\eta_\gamma G(\bar\gamma)_u\right\}}_{\overline{T(\cH_g)}} \\
&\le \norm{\gamma-\bar{\gamma}}_{0} \overline{DS_1}(\abs{\gamma}_{1}, \abs{\bar\gamma}_{1}, \norm{\eta}_{1}).
\end{align*}
\textbf{5)} 
The latest estimate allows to use dominated convergence one more time and compute the second derivative in the direction of~$\eta,\etab$:
\begin{align}\label{eq:PDE_dertwosig}
    \partial^{\eta\etab}_\gamma S(\gamma^g)_{t'} 
    &= \int_t^{t'} \partial^{\eta\etab}_\gamma S(\gamma^g)_u \D \gamma^g_u 
    + {\rm Source}(\gamma,\eta,\etab),
\end{align}
where 
$$
{\rm Source}(\gamma,\eta,\etab):=\int_t^{t'} \partial^\eta_\gamma S(\gamma^g)_u \cdot  \D \left\{\partial^\etab_\gamma G(\gamma)_u\right\}   +\int_t^{t'} \partial^\etab_\gamma S(\gamma^g)_u \cdot  \D \left\{\partial^\eta_\gamma G(\gamma)_u\right\} 
    + \int_t^{t'}  S(\gamma^g)_u \cdot  \D \left\{\partial^{\eta\etab}_\gamma G(\gamma)_u\right\}.
$$
Using the same arguments as in \eqref{eqn:dgamma_dG} one deduces that
\begin{align}
    \int_t^{t'}  \norm{\D \left\{\partial^{\eta\etab}_\gamma G(\gamma)_u\right\} }_{\cH_g}
&\le \int_t^{t'}\sum_{i,j,k=1}^e \norm{\partial^3_{x^{ijk}} g(\gamma_u,\cdot)}_{\cH_g} \abs{\etab_u}\abs{\eta_u}\abs{\D\gamma_u} \nonumber\\
& \quad+ \int_t^{t'} \sum_{i,j=1}^e \norm{\partial^2_{x^{ij}} g(\gamma_u,\cdot)}_{\cH_g} \Big( \abs{\eta_u} \abs{\D \etab_u} + \abs{\etab_u} \abs{\D \eta_u}\Big) \nonumber\\
& \le  \overline{g_{33}} \norm{\eta}_0 \norm{\etab}_0 \abs{\gamma}_{1} +\overline{g_{22}}\Big(\norm{\eta}_0 \abs{\etab}_{1} + \norm{\etab}_0 \abs{\eta}_{1} 
 \Big)
\le \overline{G_2}(\abs{\gamma}_1,\norm{\eta}_1,\norm{\etab}_1),
\label{eq:BG2}
\end{align}
where $\overline{G_2}:\bR_+^3\to\bR_+$ is a continuous function.
Grönwall's inequality once again yields the bound
\begin{align*}
    &\sup_{t'\in[t,T]} \norm{\partial^{\eta\etab}_\gamma S(\gamma^g)_{t'}}_{\overline{T(\cH_g)}} \\
    &\le \Big(\overline{S_1}(\abs{\gamma}_1,\norm{\eta}_1) \overline{G_1}(\abs{\gamma}_1,\norm{\etab}_1) +  \overline{S_1}(\abs{\gamma}_1,\norm{\etab}_1) \overline{G_1}(\abs{\gamma}_1,\norm{\eta}_1)+\overline{S_0}(\abs{\gamma}_1) \overline{G_2}(\abs{\gamma}_1,\norm{\eta}_1,\norm{\etab}_1)\Big) \E^{\overline{g_{11}} \abs{\gamma}_{1}}\\
    &=:\overline{S_2}(\abs{\gamma}_1,\norm{\eta}_1,\norm{\etab}_1).
\end{align*}
\textbf{6)} The same steps as for the first derivative yield the existence of a continuous function $\overline{DG_2}:\bR^4\to\bR$ such that
$$
 \int_t^{t'}  \norm{\D \left\{\partial^{\eta\etab}_\gamma \big(G(\gamma)_u-G(\gammab)_u\big)\right\} }_{\cH_g} \le \norm{\gamma-\gammab}_0 \overline{DG_2}(\abs{\gamma}_1,\abs{\gammab}_1,\norm{\eta}_1,\norm{\etab}_1).
$$
Hence the difference of the source term for two paths $\gamma,\gammab$ can be bounded as follows
\begin{align*}
&\norm{{\rm Source}(\gamma,\eta,\etab)-{\rm Source}(\gammab,\eta,\etab) }_{\overline{T(\cH_g)}}\\ 
&\le 
\norm{\gamma-\gammab}_0 \Big( \overline{S_1}(\abs{\gamma}_1,\norm{\eta}_1)\overline{DG_1}(\abs{\gamma}_1,\abs{\gammab}_1,\norm{\eta}_1) + \overline{G_1}(\abs{\gamma}_1,\abs{\gammab}_1,\norm{\eta}_1)\overline{DS_1}(\abs{\gamma}_1,\abs{\gammab}_1,\norm{\eta}_1)\\
&\quad+ \overline{S_1}(\abs{\gamma}_1,\norm{\etab}_1)\overline{DG_1}(\abs{\gamma}_1,\abs{\gammab}_1,\norm{\etab}_1) + \overline{G_1}(\abs{\gamma}_1,\abs{\gammab}_1,\norm{\etab}_1)\overline{DS_1}(\abs{\gamma}_1,\abs{\gammab}_1,\norm{\etab}_1) \\
&\quad+ \overline{S_0}(\abs{\gamma}_1)\overline{G_2}(\abs{\gamma}_1,\abs{\gammab}_1,\norm{\eta}_1,\norm{\etab}_1)+ \overline{DS_0}(\abs{\gamma}_1,\abs{\gammab}_1) \overline{G_2}(\abs{\gamma}_1,\abs{\gammab}_1,\norm{\eta}_1,\norm{\etab}_1) \Big).
\end{align*}
Finally, by \cite[Theorem 3.15]{friz2010multidimensional} again there is a continuous function
$\overline{DS_2}:\bR^4\to\bR$ such that
$$
\sup_{t'\in[t,T]}\norm{\partial^{\eta\etab}_\gamma S(\gamma^g)_{t'}-\partial^{\eta\etab}_\gamma S(\bar{\gamma}^g)_{t'}}_{\overline{T(\cH_g)}} 
\le \norm{\gamma-\bar{\gamma}}_{0} \overline{DS_2}(\abs{\gamma}_{1}, \abs{\bar\gamma}_{1}, \norm{\eta}_{1},\norm{\etab}_1)
$$
thus concluding the proof.
\end{proof}

\subsection{Differentiability and continuity estimates for the signature kernel}\label{sec:proofs_kernel}
We start by stating estimates for the signature kernel and its derivatives, which follow from Lemma~\ref{lemma:bound_sig_appendix}. The PDE formulations in~\Cref{prop:der_kappag} are immediate corollaries. In terms of notations, for two paths~$\gamma\in\BV_t$ and~$\tau\in\BV_s$, we will denote~$\abs{\gamma}_1=\abs{\gamma}_{1-var;[t,T]}$ and~$\abs{\tau}_1=\abs{\tau}_{1-var;[s,T]}$.
\begin{lemma}\label{lemma:bound_kappag_appendix} 
Let Assumption and~\ref{assu:statickernel} hold for~$g$. Let~$s,t\in\bT$, the paths $\gamma,\eta,\etab\in \BV_t$ and~$\tau\in\BV_s$. Then the following estimates hold:
\begin{align*}
    &\sup_{s'\in[s,T],t'\in[t,T]} \abs{\kappa^g(\gamma,\tau)_{s',t'}} \le \overline{S_0}(\abs{\gamma}_1) \overline{S_0}(\abs{\tau}_1);\\
    &\sup_{s'\in[s,T],t'\in[t,T]} \abs{\partial^\eta_\gamma \kappa^g(\gamma,\tau)_{s',t'}} \le \overline{S_1}(\abs{\gamma}_1,\norm{\eta}_1) \overline{S_0}(\abs{\tau}_1);\\
    &\sup_{s'\in[s,T],t'\in[t,T]} \abs{\partial^{\eta\etab}_\gamma \kappa^g(\gamma,\tau)_{s',t'}} \le \overline{S_2}(\abs{\gamma}_1,\norm{\eta}_1,\norm{\etab}_1) \overline{S_0}(\abs{\tau}_1).
\end{align*}
\end{lemma}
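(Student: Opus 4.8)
The plan is to derive each bound from the corresponding signature estimate in Lemma~\ref{lemma:bound_sig_appendix} via the Cauchy--Schwarz inequality in $\overline{T(\cH_g)}$, exploiting that the signature kernel and its pathwise derivatives are inner products of signatures (and their derivatives). First I would recall from the definition of the $g$-lifted signature kernel that
\[
\kappa^g(\gamma,\tau)_{s',t'} = \left\langle S(\gamma^g)_{t,t'}, S(\tau^g)_{s,s'}\right\rangle_{\overline{T(\cH_g)}},
\]
so that
\[
\abs{\kappa^g(\gamma,\tau)_{s',t'}} \le \norm{S(\gamma^g)_{t,t'}}_{\overline{T(\cH_g)}}\,\norm{S(\tau^g)_{s,s'}}_{\overline{T(\cH_g)}}.
\]
Taking the supremum over $s'\in[s,T]$, $t'\in[t,T]$ and applying~\eqref{eq:bound_sig_S0} to each factor yields the first estimate with the very function $\overline{S_0}$ already constructed there.

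Next I would treat the first derivative. The key observation is that differentiating $\kappa^g(\gamma,\tau)_{s',t'}$ in the direction $\eta$ only acts on the first argument, and since the pairing is bilinear and continuous (the relevant limits exist by Lemma~\ref{lemma:bound_sig_appendix}, which asserts $\partial^\eta_\gamma S\circ G$ is a Fréchet derivative), one gets
\[
\partial^\eta_\gamma \kappa^g(\gamma,\tau)_{s',t'} = \left\langle \partial^\eta_\gamma S(\gamma^g)_{t,t'}, S(\tau^g)_{s,s'}\right\rangle_{\overline{T(\cH_g)}}.
\]
Cauchy--Schwarz together with~\eqref{eq:bound_sig_S1} for the first factor and~\eqref{eq:bound_sig_S0} for the second then gives $\sup_{s',t'} \abs{\partial^\eta_\gamma \kappa^g(\gamma,\tau)_{s',t'}} \le \overline{S_1}(\abs{\gamma}_1,\norm{\eta}_1)\,\overline{S_0}(\abs{\tau}_1)$. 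The second derivative is identical: differentiating twice in $\eta,\etab$ leaves the second argument untouched, so $\partial^{\eta\etab}_\gamma \kappa^g(\gamma,\tau)_{s',t'} = \langle \partial^{\eta\etab}_\gamma S(\gamma^g)_{t,t'}, S(\tau^g)_{s,s'}\rangle$, and Cauchy--Schwarz with~\eqref{eq:bound_sig_S2} and~\eqref{eq:bound_sig_S0} delivers the third estimate.

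The only genuinely delicate point — the ``main obstacle'' — is justifying that the pathwise derivative commutes with the inner product, i.e.\ that $\partial^\eta_\gamma \langle S(\gamma^g), S(\tau^g)\rangle = \langle \partial^\eta_\gamma S(\gamma^g), S(\tau^g)\rangle$, and likewise for the second derivative. This is not quite a formality because the pairing is an infinite sum over words; however it follows directly from the fact, established in Lemma~\ref{lemma:bound_sig_appendix} and Proposition~\ref{prop:cty_signature}, that $\partial^\eta_\gamma S\circ G$ and $\partial^{\eta\etab}_\gamma S\circ G$ exist as genuine Fréchet derivatives in $\overline{T(\cH_g)}$, combined with continuity of the bilinear pairing $\langle\cdot,\cdot\rangle_{\overline{T(\cH_g)}}$ with respect to the Hilbert norm. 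Once that is in place the estimates are a one-line application of Cauchy--Schwarz, and no separate Gr\"onwall argument is needed since all the work has already been done at the level of the signature. I would also note in passing, as the lemma statement implicitly requires, that these bounds are uniform in $s,t\in\bT$ because the functions $\overline{S_0},\overline{S_1},\overline{S_2}$ depend only on the $1$-variation norms over $[t,T]$ and $[s,T]$, which are themselves bounded on the compact state space we restrict to.
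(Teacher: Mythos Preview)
Your proposal is correct and follows essentially the same approach as the paper: both arguments express the kernel and its derivatives as inner products $\langle \partial^\bullet_\gamma S(\gamma^g), S(\tau^g)\rangle$ and then apply Cauchy--Schwarz together with the signature bounds~\eqref{eq:bound_sig_S0}--\eqref{eq:bound_sig_S2}. The only cosmetic difference is that the paper justifies the interchange of derivative and inner product by writing out the difference-quotient bound via~\eqref{eq:bound_sig_DS0} (calling it a ``dominated convergence argument''), whereas you invoke Fr\'echet differentiability of $S\circ G$ plus continuity of the bilinear pairing directly; these are the same fact packaged differently.
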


\begin{proof}
    Let~$0\le s\le s'\le T$, $0\le t\le t'\le T$ and~$\gamma,\eta,\etab\in\BV_t,\tau\in\BV_s$. By the definition of the signature kernel, Cauchy-Schwarz inequality and~\eqref{eq:bound_sig_S0} we have
    $$
\abs{\kappa^g(\gamma,\tau)_{s',t'}}
=\abs{\langle S(\gamma^g)_{t'}, S(\tau^g)_{s'}\rangle}
\le \norm{S(\gamma^g)_{t'}}_{\overline{T(\cH_g)}} \norm{S(\tau^g)_{s'}}_{\overline{T(\cH_g)}}
\le \overline{S_0}(\abs{\gamma}_1) \overline{S_0}(\abs{\tau}_1).
    $$
    Then for any~$\ep>0$, Cauchy-Schwarz inequality and~\eqref{eq:bound_sig_DS0} yield
    \begin{align*}
       \abs{1}{\ep} \abs{\kappa^g(\gamma+\ep\eta,\tau)_{s',t'}-\kappa^g(\gamma,\tau)_{s',t'}} \le \norm{\eta}_0 \overline{DS_0}(\abs{\gamma}_1,\abs{\gamma}_1+\abs{\eta}_1) \overline{S_0}(\abs{\tau}_1).
    \end{align*}
    This dominated convergence argument combined with Cauchy-Schwarz inequality and~\eqref{eq:bound_sig_S1} entail
    \begin{align*}
        \abs{\partial_\gamma^\eta \kappa^g(\gamma,\tau)_{s',t'}}
    = \abs{\langle \partial^\gamma_\eta S(\gamma^g)_{t'}, S(\tau^g)_{s'} \rangle} 
    \le \overline{S_1}(\abs{\gamma}_1,\norm{\eta}_1) \overline{S_0}(\abs{\tau}_1).
    \end{align*}
    The same ideas coupled with the bounds \eqref{eq:bound_sig_S2} and~\eqref{eq:bound_sig_DS2} lead to the last estimate.
\end{proof}

\begin{proof}[Proof of Proposition \ref{prop:der_kappag}]

Let~$0\le s\le s'\le T$ and~$0\le t\le t'\le T$ and~$\gamma,\eta,\etab\in\BV_t,\tau\in\BV_s$.

\textbf{ 1)}
In virtue of \eqref{eq:bound_sig_DS0}, dominated convergence allows to 
 push the limit inside the integrals:
\begin{align*}
    \partial_\gamma^\eta \kappa^g(\gamma,\tau)_{s',t'} &= \lim_{\ep \to 0}\frac{1}{\ep} \left(\kappa^g(\gamma + \ep \eta,\tau)_{s',t'} - \kappa^g(\gamma,\tau)_{s',t'}\right) \\
    &= \lim_{\ep \to 0}\frac{1}{\ep} \int_s^{s'}\int_t^{t'} \left(\kappa^g(\gamma + \ep \eta,\tau)_{u,v} \left\langle \D (\gamma + \ep \eta)^g_u, \D \tau^g_v \right\rangle_{\cH_g} -\kappa^g(\gamma,\tau)_{u,v}\left\langle \D \gamma^g_u, \D \tau^g_v \right\rangle_{\cH_g} \right)\\
    & =\int_s^{s'}\int_t^{t'}  \lim_{\ep \to 0}\frac{1}{\ep}\Big(\kappa^g(\gamma + \ep \eta,\tau)_{u,v}- \kappa^g(\gamma,\tau)_{u,v}\Big) \left\langle\D (\gamma + \ep \eta)^g_u, \D \tau^g_v \right\rangle_{\cH_g} \\
    &\qquad + \kappa^g(\gamma,\tau)_{u,v} \lim_{\ep \to 0}\frac{1}{\ep}\left\langle\D(\gamma + \ep \eta)^g_u - \D\gamma^g_u , \D \tau^g_v \right\rangle_{\cH_g} \\
    &= \int_s^{s'}\int_t^{t'} \left(\partial_\gamma^\eta\kappa^g(\gamma,\tau)_{u,v} \left\langle \D \gamma^g_u, \D \tau^g_v \right\rangle_{\cH_g} + \kappa^g(\gamma,\tau)_{u,v} \lim_{\ep \to 0}\frac{1}{\ep}\left\langle\D(\gamma + \ep \eta)^g_u - \D\gamma^g_u , \D \tau^g_v \right\rangle_{\cH_g} \right).
\end{align*}    
For all~$u\in[t,T]$ and~$v\in[s,T]$ we have similarly to~\eqref{eqn:dgamma_dG}
\begin{align}
    & \lim_{\ep \to 0}\frac{1}{\ep} \Big\langle\D (\gamma + \ep \eta)^g_u - \gamma^g_u , \D \tau^g_v \Big\rangle_{\cH_g} \nonumber
    \\
    &= \lim_{\ep \to 0}\frac{1}{\ep} 
    \sum_{i,k=1}^e \Big\langle  \partial_i g(\gamma_u+\ep \eta_u,\cdot)\D(\gamma_u^i+\ep\eta_u^i) - \partial_i g(\gamma_u,\cdot)\D\gamma_u^i ,\partial_k g(\tau_v,\cdot)\D \tau^k_v \Big\rangle\nonumber\\
    &=\lim_{\ep \to 0}\frac{1}{\ep} 
    \sum_{i,k=1}^e \Big\langle \partial_i g(\gamma_u+\ep \eta_u,\cdot)- \partial_i g(\gamma_u,\cdot), \partial_k g(\tau_v,\cdot) \Big\rangle \D\gamma_u^i\D \tau^k_v + \lim_{\ep \to 0}
    \sum_{i,k=1}^e\Big\langle \partial_i g(\gamma_u+\ep\eta_u,\cdot) , \partial_k g(\tau_v,\cdot) \Big\rangle\D\eta^i_u\D\tau^k_v  \nonumber\\
    &=\sum_{i,k=1}^e\lim_{\ep \to 0}\bigg\{\frac{1}{\ep} 
     \partial_{x^i}\partial_{y^k} \Big( g(\gamma_u+\ep\eta_u,\tau_v)-g(\gamma_u,\tau_v)\Big)\D\gamma_u^i\D \tau^k_v
    +\partial_{x^i}\partial_{y^k} g(\gamma_u+\ep\eta_u,\tau_v)\D\eta^i_u\D\tau^k_v\bigg\} \nonumber\\
    &= \sum_{i,k=1}^e \bigg\{ \partial^\eta_\gamma \partial_{x^i y^k}^2 g(\gamma_u,\tau_v) \D \gamma^i_u + \partial_{x^i y^k}^2  g(\gamma_u,\tau_v)  \D \eta_u^i \bigg\}\D \tau^k_v \nonumber\\
    &= \sum_{i,k=1}^e \bigg\{\sum_{j=1}^e\partial_{x^{ij} y^k}^3  g(\gamma_u,\tau_v)\eta_u^j \D \gamma_u^i + \partial_{x^i y^k}^2 g(\gamma_u,\tau_v)  \D \eta_u^i \bigg\} \D \tau^k_v \label{eqn:dgamma_partial}
\end{align}
where we used the reproducing property of derivatives of $g$ \eqref{eq:reproducing_dg} and we can swap partial derivatives because $g(\cdot,x)\in \cC^2(\bR^e,\bR)$ for all $x\in \bR^e$. Unrolling in the other direction one can also obtain
\begin{align}
    \sum_{i,k=1}^e \bigg\{\sum_{j=1}^e \partial_{x^{ij} y^k}^3  g(\gamma_u,\tau_v)\eta_u^j \D \gamma_u^i + \partial_{x^i y^k}^2 g(\gamma_u,\tau_v)  \D \eta_u^i \bigg\} \D \tau^k_v 
    &= \left\langle \D \left\{ \sum_{i=1}^e\partial_{x^i} g(\gamma_u,\cdot) \eta^i_u\right\} ,\D \tau^g_v\right\rangle \nonumber \\
    &= \left\langle\D \Big\{ \partial^\eta_\gamma G(\gamma)_u\Big\},\D \tau^g_v\right\rangle . \label{eqn:dgamma_dG_kernel}
\end{align}
The expressions \eqref{eqn:dgamma_dG_kernel} and \eqref{eqn:dgamma_partial} yield \eqref{eqn:pde_derivative_G} and \eqref{eqn:pde_derivative_partial} respectively.

\textbf{2)} 
We take the terms of \eqref{eqn:pde_derivative_partial} one by one to compute 
\begin{align*}
    \partial_\gamma^{\eta\etab} \kappa^g(\gamma,\tau)_{s',t'} &:=\lim_{\ep\to0}\frac{1}{\ep}\Big(\partial_\gamma^{\eta} \kappa^g(\gamma+\ep\etab,\tau)_{s',t'}-\partial_\gamma^{\eta} \kappa^g(\gamma,\tau)_{s',t'}\Big) \\
    &= \lim_{\ep\to0}\frac{1}{\ep}\Bigg\{\int_s^{s'}\int_t^{t'} \partial_\gamma^\eta\kappa^g(\gamma+\ep\etab,\tau)_{u,v} \sum_{i,k=1}^e \partial_{x^i y^k}^2 g(\gamma_u+\ep\etab_u,\tau_v) \D (\gamma^i_u+\ep\etab^i_u) \D \tau^k_v \\
    &\qquad\qquad\qquad -\int_s^{s'}\int_t^{t'} \partial_\gamma^\eta\kappa^g(\gamma,\tau)_{u,v} \sum_{i,k=1}^e \partial_{x^i y^k}^2 g(\gamma_u,\tau_v) \D \gamma^i_u \D \tau^k_v \Bigg\} \\
    &+ \lim_{\ep\to0}\frac{1}{\ep}\Bigg\{\int_s^{s'}\int_t^{t'}\kappa^g(\gamma+\ep\etab,\tau)_{u,v} \sum_{i,j,k=1}^e \partial_{x^{ij}y^k}^3
    g(\gamma_u+\ep\etab_u,\tau_v) \eta_u^j \D (\gamma_u^i+\ep\etab^i_u)\D\tau^k_v \\
    &\qquad\qquad\qquad -\int_s^{s'}\int_t^{t'}\kappa^g(\gamma,\tau)_{u,v} \sum_{i,j,k=1}^e \partial_{x^{ij}y^k}^3
    g(\gamma_u,\tau_v) \eta_u^j \D \gamma_u^i\D\tau^k_v\Bigg\} \\
    & + \lim_{\ep\to0}\frac{1}{\ep}\Bigg\{\int_s^{s'}\int_t^{t'}\kappa^g(\gamma+\ep\etab,\tau)_{u,v} \sum_{i,k=1}^e \partial_{x^i y^k}^2 g(\gamma_u+\ep\etab_u,\tau_v)\D \eta_u^i  \D \tau^k_v\\
    &\qquad\qquad\qquad -\int_s^{s'}\int_t^{t'}\kappa^g(\gamma,\tau)_{u,v} \sum_{i,k=1}^e \partial_{x^i y^k}^2 g(\gamma_u,\tau_v)\D \eta_u^i  \D \tau^k_v\Bigg\}.
\end{align*}
Thanks to Lemma \ref{lemma:bound_sig_appendix}, we can use dominated convergence to swap limits and integrals. After rearranging terms we obtain
\begin{align}\label{eqn:pde_derivative_partial2}
    \partial_\gamma^{\eta\etab} \kappa^g(\gamma,\tau)_{s',t'} 
    &= \int_s^{s'}\int_t^{t'} \partial_\gamma^{\eta\etab} \kappa^g(\gamma,\tau)_{u,v} \sum_{i,k=1}^e \partial_{x^i y^k}^2 g(\gamma_u,\tau_v) \D \gamma^i_u \D \tau^k_v \\
    &\quad + \int_s^{s'}\int_t^{t'} \partial_\gamma^{\eta} 
    \kappa^g(\gamma,\tau)_{u,v} \sum_{i,k,l=1}^e \partial_{x^{il} y^k}^3 g(\gamma_u,\tau_v) \etab^l_u \D \gamma^i_u \D \tau^k_v \nonumber \\
    &\quad + \int_s^{s'}\int_t^{t'} \partial_\gamma^{\eta} \kappa^g(\gamma,\tau)_{u,v} \sum_{i,k=1}^e \partial_{x^i y^k}^2 g(\gamma_u,\tau_v) \D \etab^i_u \D \tau^k_v \nonumber \\
    &\quad + \int_s^{s'}\int_t^{t'} \partial^{\etab}_\gamma \kappa^g(\gamma,\tau)_{u,v} \sum_{i,j,k=1}^e\partial_{x^{ij} y^k}^3 g(\gamma_u,\tau_v)\eta_u^j \D \gamma_u^i\D\tau^k_v \nonumber \\
    &\quad + \int_s^{s'}\int_t^{t'}\kappa^g(\gamma,\tau)_{u,v} \sum_{i,j,k,l=1}^e\partial_{x^{ijl} y^k}^4 g(\gamma_u,\tau_v) \etab^l_u \eta_u^j \D \gamma_u^i\D\tau^k_v  \nonumber\\
    &\quad + \int_s^{s'}\int_t^{t'}\kappa^g(\gamma,\tau)_{u,v} \sum_{i,j,k=1}^e\partial_{x^{ij} y^k}^3 g(\gamma_u,\tau_v)\eta_u^j \D \etab_u^i\D\tau^k_v  \nonumber\\
    &\quad + \int_s^{s'}\int_t^{t'} \partial^{\etab}_\gamma\kappa^g(\gamma,\tau)_{u,v} \sum_{i,k=1}^e\partial_{x^i y^k}^2 g(\gamma_u,\tau_v)  \D \eta_u^i  \D \tau^k_v \nonumber\\
    &\quad + \int_s^{s'}\int_t^{t'}\kappa^g(\gamma,\tau)_{u,v} \sum_{i,k,l=1}^e\partial_{x^{il} y^k}^3 g(\gamma_u,\tau_v)\etab^l_u \D \eta_u^i  \D \tau^k_v \nonumber
\end{align}
where we also used the property \eqref{eq:reproducing_dg}. We can regroup the terms according to the order of $\kappa^g$, using $\sum_{i,k=1}^e \partial_{x^i y^k}^2 g(\gamma_u,\tau_v) \D \gamma^i_u \D \tau^k_v = \langle \D\gamma^g_u, \D\tau^g_v\rangle_{\cH_g} $, Equation \eqref{eqn:dgamma_dG_kernel} and similarly for the next order:
\begin{equation}
\Big\langle \D \partial^{\eta\etab}_\gamma G(\gamma)_u, \D\tau^g_v\Big\rangle_{\cH_g} 
= \sum_{i,k,l=1}^e \left\{ \sum_{j=1}^e\partial_{x^{ijl} y^k}^4 g(\gamma_u,\tau_v) \etab^l_u \eta_u^j \D \gamma_u^i + \partial_{x^{ij} y^k}^3 g(\gamma_u,\tau_v)\eta_u^j \D \etab_u^i
+\partial_{x^{il} y^k}^3 g(\gamma_u,\tau_v)\etab^l_u \D \eta_u^i \right\} \D \tau^k_v.    
\label{eq:Expression_DDG}
\end{equation}
Eventually this yields the more compact formulation
\begin{align*}
    \partial_\gamma^{\eta\etab} \kappa^g(\gamma,\tau)_{s,t} =& \int_s^{s'}\int_t^{t'} \partial_\gamma^{\eta\etab} \kappa^g(\gamma,\tau)_{u,v} \langle \D\gamma^g_u, \D\tau^g_v\rangle_{\cH_g}
    +\int_s^{s'}\int_t^{t'} \partial_\gamma^{\eta} \kappa^g(\gamma,\tau)_{u,v} \langle \D\partial_\gamma^{\etab} G(\gamma)_u, \D\tau^g_v\rangle_{\cH_g} \\
    &+ \int_s^{s'}\int_t^{t'} \partial_\gamma^{\etab} \kappa^g(\gamma,\tau)_{u,v} \langle \D\partial^\eta_\gamma G(\gamma)_u, \D\tau^g_v\rangle_{\cH_g}
    + \int_s^{s'}\int_t^{t'} \kappa^g(\gamma,\tau)_{u,v} \langle \D\partial^{\eta\etab}_\gamma G(\gamma)_u, \D\tau^g_v\rangle_{\cH_g},
\end{align*}
as claimed.
\end{proof}




\bibliography{references}
\bibliographystyle{abbrv}

\end{document}